\tikzset{
    >=stealth',
    pil/.style={
           ->,
           thick,
           shorten <=2pt,
           shorten >=2pt,}
}
\numberwithin{equation}{section}
\def \config{\Omega_{\Lambda}}
\def \pconfig{\Omega_{\mathcal{B}(\Lambda)}}
\def \conc{\bullet}
\def \S{\mathcal{S}}
\newcommand \num[2]{\,\#_{#1}{\left(#2\right)}}
\newcommand \init[1]{#1^{(\mathrm{init})}}
\newcommand \fin[1]{#1^{(\mathrm{fin})}}
\newcommand \occ[1]{\mathfrak{N}_{#1}}
\newcommand \mass[1]{\tcr{[ERROR - NOT NEEDED]}}
\newcommand \flowna[1]{F^{(\mathrm{na})}_{#1}}
\newcommand \flowfull[1]{F_{#1}}
\newcommand \flowtrunc[2]{F^{(#2)}_{#1}}
\newcommand \flowtot[1]{F^{(\mathrm{pt})}_{#1}}
\def \cnaive{\beta}
\def \cultra{\beta^{-1}}
\newcommand \res[2]{#2|_{#1}}
\def \rect{\mathbf{R}}
\def \edges{\mathbf{E}}
\def \Id{\mathbf{I}}
\def \type{\mathbf{TYPE}}
\newcommand \naiveanc[1]{G^{(\mathrm{na})}(#1)}
\newcommand \longanc[2]{G^{(\mathrm{long})}(#1,#2)}
\newcommand \allext[2]{\mathfrak{T}_{#1}(#2)}
\newcommand \allallext[1]{\mathfrak{T}_{#1}}
\newcommand \recrit[1]{\mathcal{R}_{n}}
\def \be{\begin{equs}}
\def \ee{\end{equs}}
\def \P{\mathbb{P}}
\def \E{\mathbb{E}}
\newcommand \tcr[1]{\textcolor{red}{(#1)}}
\def \G{\mathcal{G}}
\def \t{\theta}
\def \ut{\underline{\theta}}
\def \L{\mathrm{L}}
\def \d{\mathrm{d}}
\newcommand \D[1]{\Delta_{t}(#1)}
\def \m{n}
 \let\b=\beta   \let\d=\delta  
 \let\g=\gamma \let\h=\eta      
         \let\p=\pi  
  \let\s=\sigma \let\t=\tau   
  \let\z=\zeta
\let\D=\Delta   \let\G=\Gamma  \let\L=\Lambda 
\let\O=\Omega
\renewcommand{\leq}{\;\leqslant\;}                   
\renewcommand{\geq}{\;\geqslant\;}                   
\DeclareMathOperator*{\union}{\bigcup}       
\newcommand{\inftwo}[2]{\inf_{\substack{#1 \\ #2}}} 
\renewcommand{\b}{\beta}
\newcommand{\1}{\mathds{1}}
\newcommand{\var}{\operatorname{Var}}
\newcommand{\tmix}{T_{\rm mix}}
\newcommand{\trel}{T_{\rm rel}}
\newcommand{\gap}{{\rm gap}}
\newcommand{\vmax}{v_{\mathrm{max}}}
\newcommand{\umax}{u_{\mathrm{max}}}
\newcommand{\N}{\mathbb N}
\newcommand{\cA}{\ensuremath{\mathcal A}}
\newcommand{\cB}{\ensuremath{\mathcal B}}
\newcommand{\cC}{\ensuremath{\mathcal C}}
\newcommand{\cD}{\ensuremath{\mathcal D}}
\newcommand{\cG}{\ensuremath{\mathcal G}}
\newcommand{\cL}{\ensuremath{\mathcal L}}
\newcommand{\cQ}{\ensuremath{\mathcal Q}}
\newcommand{\cS}{\ensuremath{\mathcal S}}
\newcommand{\cV}{\ensuremath{\mathcal V}}
\newcommand{\bbR}{{\ensuremath{\mathbb R}} }
\newcommand{\bbZ}{{\ensuremath{\mathbb Z}} }
\newcommand{\Z}{{\ensuremath{\mathbb Z}} }
\newtheorem{theorem}{Theorem}[section]
\newtheorem{lemma}[theorem]{Lemma}
\newtheorem{prop}[theorem]{Proposition}
\theoremstyle{plain}
\newtheorem{thm}[theorem]{Theorem}
\newtheorem*{thm-non}{Theorem}
\newtheorem{cor}[theorem]{Corollary}
\theoremstyle{definition}
\newtheorem{defn}[theorem]{Definition}
\newtheorem{remark}[theorem]{Remark}
\begin{document}

\begin{frontmatter}
\title{Mixing of the Square Plaquette Model on a Critical Length Scale}
\runtitle{Mixing of the Square Plaquette Model}


\begin{aug}
\author{\fnms{Paul} \snm{Chleboun}\ead[label=e1]{paul.i.chleboun@warwick.ac.uk}}
\and
\author{\fnms{Aaron} \snm{Smith}\thanksref{t2}\ead[label=e2]{asmi28@uottawa.ca}}
\thankstext{t2}{Supported by a grant from NSERC}
\runauthor{Chleboun and Smith}
\affiliation{
 University of Warwick and
 University of Ottawa\thanksmark{m2}}
\address{
Department of Statistics,\\
 University of Warwick,  \\
Coventry,  CV4 7AL \\  United Kingdom \\ \printead{e1}}
\address{
Department of Mathematics and Statistics\\
University of Ottawa \\
 585 King Edward Avenue,  Ottawa\\
ON K1N 7N5 \\ Canada  \\\printead{e2}}
\end{aug}

\maketitle


\begin{keyword}[class=AMS]
\kwd[Primary ]{60J27}
\kwd[; secondary ]{60J28}
\end{keyword}

\begin{keyword}
\kwd{Markov Chain, Mixing Time, Spectral Gap, Plaquette Model, Glass Transition}
\end{keyword}

\begin{abstract}
Plaquette models are short range ferromagnetic spin models that play a key role in the dynamic facilitation approach to the liquid glass transition. In this paper we study the dynamics of the square plaquette model at the smallest of the three critical length scales discovered in \cite{Chleboun2017}.
Our main results are estimates of the spectral gap and mixing time for two natural boundary conditions.
As a consequence, we observe that these time scales depend heavily on the boundary condition in this scaling regime.
\end{abstract}

\end{frontmatter}


\setcounter{tocdepth}{1}
\tableofcontents



\section{Introduction} \label{sec:intro}

In this paper we consider the dynamics of the square plaquette model (SPM) at low temperature.
Spin plaquette models were originally associated with glassy behaviour in \cite{Garrahan2002b,Newman1999}, where it was argued that they have more physically-realistic dynamics and thermodynamic properties than kinetically-constrained models, while remaining mathematically tractable.
These models have also recently been generalised to quantum systems, called fracton models, which show extremely similar relaxation behaviour \cite{Chamon2005,Nandkishore2018}.
Plaquette models are defined over an integer lattice $\Lambda \subseteq \Z^{d}$, and configurations of the plaquette model correspond to $\pm 1$-valued labellings of the lattice $\Lambda$. Every configuration of the SPM has an associated energy given by certain short range ferromagnetic interactions that are defined in terms of the  \textit{plaquettes}, a collection of subsets of $\Z^{2}$ denoted by $\mathcal{P}$.

More formally, plaquette models are families of probability distributions with state space $\{-1,+1\}^{\Lambda}$. Every configuration $\sigma \in \{-1,+1\}^{\Lambda}$ has an associated energy value, given by the Hamiltonian \[H(\sigma) = -\frac{1}{2}\sum_{P\in \mathcal{P}}\prod_{x \in P}\sigma_x\,.\] For any fixed inverse-temperature $\beta > 0$, we then associate to this Hamiltonian the probability distribution $\pi_{\Lambda}^\beta$ on $\{-1,1\}^{\Lambda}$ given by $\pi_{\Lambda}^\beta(\sigma) \propto \textrm{exp}(-\beta H_\Lambda(\sigma))$.  In the case of the SPM, we always take $\Lambda \subset \Z^{2}$ and the plaquettes are exactly the collection of unit squares contained in $\Lambda$. There are several natural Markov chains associated with these probability distributions, many of which have very similar behaviour. In this paper we study the most popular of these Markov chains, the \textit{continuous-time single-spin Glauber dynamics} (also known as the \textit{Gibbs sampler}). Roughly speaking, this Markov chain evolves from a configuration $\sigma$ by choosing a random site $x \in \Lambda$ at unit rate and then updating the value $\sigma(x)$ conditional on $\{\sigma(y)\}_{y \in \Lambda \backslash \{x\}}$ according to the measure $\pi_{\Lambda}^{\beta}$.

Despite the relatively simple form of the Hamiltonian, the thermodynamics of these measures is non trivial \cite{Chleboun2017}. Although there is no phase transition in the SPM (i.e. there is a unique infinite volume Gibbs measure for each $\beta$), static correlation lengths grow extremely quickly as the temperature tends to zero ($\beta \to \infty$). Furthermore, the ground states (configurations $\sigma$ that minimise $H(\sigma)$) depend on the boundary conditions, and are often highly degenerate. It turns out that this plays an important role for the dynamics of the process. In this paper we consider the dynamics of the SPM in the low temperature regime, i.e. as $\beta \to \infty$, on the smallest of the critical length scales in \cite{Chleboun2017}.

Plaquette spin models, under single spin-flip Glauber dynamics, have recently attracted a great deal of attention in the physics literature in the context of glassy materials (see e.g. \cite{Jack2016} and references therein). Understanding the liquid-glass transition and the dynamics of amorphous materials remains a significant challenge in condensed matter physics (for a review see \cite{Berthier2011a}). One particularly successful approach to studying such systems, known as \emph{dynamic facilitation}, supposes that local relaxation events facilitate further relaxation events in neighbouring regions, but in the absence of such events the system is locally unable to change state (transitions are \textit{blocked}). This idea led to the introduction of a class of interacting particle systems called kinetically constrained models (KCMs), which feature trivial stationary measures but complicated dynamics.  These models display many of the key features of glassy systems, such as aging \cite{Faggionato2011} and dynamical heterogeneity \cite{Chleboun2012}, and have been extremely well studied in both the mathematical and physics literature (see \cite{Garrahan2010} for a review). There are two crucial difficulties in justifying KCMs as models for the liquid glass transition: it is not clear how kinetic constraints emerge from the microscopic dynamics of many-body systems, and, since KCMs have trivial thermodynamics, KCMs cannot account for the growth of static correlations. It turns out that plaquette spin models address both these issues, and in particular their dynamics are effectively constrained \cite{Garrahan2002b,Newman1999}.

The dynamics of plaquette spin models has been the focus of several works in the physics literature. Initial simulations clearly indicate the occurrence of glassy dynamics (extremely slow relaxation) at low temperature \cite{Jack2005a,Newman1999}.   The products $\prod_{x \in P} \sigma_{x}$ of the spins over the individual plaquettes $P \in \mathcal{P}$ play an important role in studying these dynamics. These products  are called the \emph{plaquette variables}, and a plaquette variable equal to $-1$ is said to be a \emph{defect}. In the SPM, we note that the plaquettes can be naturally identified with their lower-left corner, and so the plaquette variables also form a spin system on an integer lattice.

At low temperature, it is natural to describe the dynamics of the SPM via the locations of the defects, which are effectively constrained. Indeed, flipping a single spin will flip the value of all the plaquette variables whose plaquettes contain the corresponding site - in the SPM, this means flipping the four adjoining plaquette variables. If the plaquette variables associated to the four plaquettes containing a given spin are currently all $+1$ (i.e. there are no defects) then the spin flips at an extremely slow rate of $e^{-4\beta}$; if there is currently one defect associated with the spin and the other plaquette variables are positive then the spin flips with rate $e^{-2\beta}$; finally, if there are two or more defects associated with a spin, then it flips with rate $1$ (see Fig. \ref{FigPairMove}).
For this reason, defects are infrequently created, and isolated defects move extremely slowly, while paired defects can move quickly.

Simulations, and heuristic analysis based on this observation, suggest that for the SPM the relaxation time scales like $e^{c \beta}$ (Arrhenius scaling), and that the dynamics are closely related to those of the Fredrickson-Andersen KCM for which mixing properties have been well studied (see \cite{Pillai2017,Pillai2017a,Blondel2012a} and references therein).
On the other hand, in a related model called the triangular plaquette model the relaxation time is expected to scale like $e^{c \beta^2}$ (super-Arrhenius scaling) \cite{Garrahan2002b}.
These dynamics are closely related to a particularly KCM known as the East model which has been widely studied (see \cite{Chleboun2015,Faggionato2012,Ganguly2015} and references therein).
This difference between Arrhenius and super-Arrhenius scaling is fundamental due to the nature of the energy barriers that should be overcome to bring isolated defects together and annihilate them.
Despite their importance, as far as we are aware this work represents the first rigorous results related to the dynamics of plaquette spin models.

Our main results are on the dynamics of the SPM in boxes $\Lambda = \{1,2,\ldots,L\}^{2}$ with side length $L$ given by what physicists call the \textit{critical length scale} -  the correlation length for the product of spin variables in the infinite volume Gibbs measure. In \cite{Chleboun2017}, this critical length scale was shown to satisfy $L \approx e^{\frac{\beta}{2}}$ as $\beta \rightarrow 0$. Our results show that the relaxation time (inverse spectral gap) and total variation mixing times indeed have Arrhenius scaling on this critical length scale. We also show that, on this length scale, the relaxation time has a dramatic dependence on the boundary conditions, a phenomena that has been previously observed for certain kinetically constrained models \cite{Chleboun2014}.

Our first result is that, for the SPM in boxes of side length $L \approx e^{\frac{\beta}{2}}$ with all plus boundary conditions, the relaxation time scales as $e^{3.5\beta}$ up to polynomial factors in $\beta$ as $\beta \to \infty$. Furthermore, the total variation mixing time in this case is between $e^{3.5\beta}$ and $e^{4\beta}$ (See Theorem \ref{ThmMainResPlus}).
Our second main result is on the same length scale for periodic boundary conditions. We find that with periodic boundary conditions on the critical length scale the relaxation time and total variation mixing time both scale as $e^{4\beta}$ up to polynomial corrections in $\beta$ (see Theorem \ref{ThmMainResPer}).

It turns out that these different time scales are caused by the structure of the ground states. In particular, with all plus boundary conditions there is a unique ground state (all sites have spin $+1$), and the low temperature dynamics are dominated by the time to reach the ground state.
On the other hand with periodic boundary conditions there are $2^{2L-1}$ ground states, where $L$ is the side length of the box, which correspond to flipping all the spins in any set of rows and columns with respect to the all plus state.
In this case the dynamics are dominated by an induced random walk on the ground states.
It is possible to construct boundary conditions such that there are a small number of  very well separated ground states (see Remark \ref{Rem:bad}).
In this case we conjecture that the relaxation time is at least $e^{4.5\beta}$.
In a companion paper \cite{PlaquetteBigSquareLattice}, we give Arrhenius bounds on the mixing time and spectral gap for all boundary conditions for length $L \gg e^{\frac{\beta}{2}}$.

The main tools used in the proof of the upper bounds will be detailed canonical path bounds using multi-commodity flows \cite{Sinclair1992}, combined with the spectral profile method introduced in \cite{Goel2006}.
Rather surprisingly, it turns out that a lot of effort is required to construct flows which do not have very bad congestion on edges connecting certain low-probability configurations with many defects. The main difficulty, informally sketched in Section \ref{SecHeuristic} and near the beginning of Section \ref{SubsecPathLowDef}, is common to plaquette models and other models for which mixing is greatly facilitated by the exchange of small, short-lived, and rapidly-moving configurations of interacting particles (see Figure \ref{FigPairMove}).
In particular we believe that much of the effort in constructing the multi-commodity flows will be useful also for the study of other plaquette models.
To prove the upper bounds in the case of periodic boundary conditions we compare the trace of the process on the ground states with the simple random walk on the hypercube, and use several times the result that mixing times are related to the hitting times of large sets \cite{Oliveira2012,Peres2015}.

\subsection{Guide to Paper}

We begin by setting basic notation for the paper in Section \ref{SecNotation}. This section also includes a heuristic description of the dynamics of the SPM near stationarity. This heuristic guides our proof strategy and also suggests the final results, and we suggest that readers fully digest this heuristic before reading the more precise proofs.  Section \ref{SecMainResults} includes a precise statement of our main results.  Section \ref{SecPrelimResults} gives some important basic results that will be used frequently throughout the paper, including a description of all possible plaquette configurations and a concentration result for the number of defects under the stationary measure. Section \ref{SecConCanPath} is the bulk of the paper. It describes the canonical path method, derives some specific forms of these bounds that will be used in this paper, constructs two families of canonical paths that will be used to analyze the SPM, and gives detailed bounds on the properties of these paths. The start of this section includes a more detailed guide to its contents. Finally, Sections \ref{SecAllPlusRes} and \ref{SecPerBoundRes} contain the proofs of our main results: bounds on the mixing and relaxation time for the all-plus and periodic boundary conditions respectively. Both sections rely heavily on the canonical path arguments for their upper bounds, and use ad-hoc constructions of special test functions for their lower bounds.

\section{Notation and background} \label{SecNotation}

\subsection{Basic Conventions}

We denote the two canonical basis vectors of $\bbZ^2$ by $e_1 = (1,0)$ and $e_2= (0,1)$. For $x \in \bbZ^2$ we denote its projection on $e_1$ and $e_2$ by $x_1$ and $x_2$ respectively.
 We define the shorthand $[a\!:\!b] = \{a,a+1,\ldots,b\}$ when $b-a \in \mathbb{N}$.

Given $\L \subseteq \bbZ^2$ we will denote by $\config$ the state space of the plaquette model, given by $\{ -1, 1\}^{\L}$, endowed with the product topology.
We let $\O=\O_{\bbZ^2}$.
  Given  $A \subset \L \subseteq \bbZ^2$ and a configuration $\s \in \O_\L$ we define  $\res{A}{\s}$ as the restriction of $\s$ to $A$.
We define the  \emph{plaquette} at site $x \in \bbZ^2$ by the set of four sites $B_x = \{x,x+e_1,x+e_2,x+e_1+e_2\}$.
For short we also write $\overline{B}_x= B_{x-e_1-e_2} = \{x,x-e_1,x-e_2,x-e_1-e_2\}$.

For functions $f, g \, : \, \mathbb{R}^{+} \mapsto \mathbb{R}^{+}$ we write $f = O(g)$ if there exists $0 < C,X < \infty$ so that $f(x) \leq C \, g(x)$ for all $x > X$. We also write $f = o(g)$ if $\lim_{x \rightarrow \infty} \frac{f(x)}{g(x)} = 0$, and we write $f = \Omega(g)$ if $g = O(f)$. Finally, we write $f = \Theta(g)$ if both $f = O(g)$ and $g = O(f)$. To save space, we also write $f \lesssim g$ for $f = O(g)$, we write $f \gtrsim g$ for $f = \Omega(g)$, and we write $f \asymp g$ for $f = \Theta(g)$. Similarly, to save space, all inequalities should be understood to hold only for all $\beta > \beta_{0}$ sufficiently large. For example, we may write $e^{\beta} \geq \beta + 25$ without additional comment. Since all of our results are asymptotic as $\beta$ goes to infinity, this convention will not cause any difficulties.
 For any function $f:A\to B$, we denote  the image of $f$ by $f(A)$.

Finally, we define two orders on $\mathbb{Z}^{2}$. 
For $x \neq y \in \mathbb{Z}^{2}$, we say that $x$ is less than $y$ in \textit{lexicographic order}\footnote{There are several different ``lexicographic" orders in the literature. The order in this paper corresponds to the order in which words are read in English if the Cartesian plane is drawn in the usual way.} if and only if one of the two following  conditions hold:
\begin{enumerate}
\label{eq:lexi}
\item $x_{2} > y_{2}$, or
\item $x_{2} = y_{2}$ and $x_{1} < y_{1}$.
\end{enumerate}
Similarly, we say that $x$ is less than $y$ in \textit{anti-lexicographic order} if and only if one of the two following  conditions hold:
\begin{enumerate}
\item $x_{2} > y_{2}$, or
\item $x_{2} = y_{2}$ and $x_{1} > y_{1}$.
\end{enumerate}
By a small abuse of notation, we say that a set $S_{1}$ is less than a set $S_{2}$ in lexicographic order if \textit{every} element of $S_{1}$ is less than \textit{every} element of $S_{2}$.

\subsection{Equilibrium Gibbs measures}
We will define the finite volume Gibbs measures on $\L  \subset \bbZ^2$ with  fixed and periodic boundary conditions.
Let $\cB(\L) = \{x\in \bbZ^2 \,:\, B_x  \cap \L \neq \emptyset\}$ be the set of plaquettes which intersect $\Lambda$, indexed by their bottom left vertex, and
let $\cB_{-}(\L) = \{x \in \bbZ^2 \, : \, \overline{B}_{x} \subset \L\}$ (if $\L$ is a rectangle, then $\cB_{-}(\L)$ is just $\L$ without the left most column and bottom row).
For a boundary condition $\t \in \O$ we will denote by $\O_\L^{\t} = \{\s\in \O \,:\, \res{\L^c}{\s} \equiv \res{\L^c}{\t}\}$. Finally we denote the external boundary of $\L$ by  $\partial(\L) = \cup_{x\in\cB(\L)}B_x\setminus  \L$.

For fixed boundary conditions $\t$, the plaquette variables associated with a spin configuration are defined by the map $p^{\t}:\O_\L^{\t}\to \O_{\cB(\L)}$ which is given by the formula
\begin{align}
  \label{EqDefectMap}
  p^{\t}_x(\s) = \prod_{y \in B_x}\s_y= \s_{(x_1,x_2)}\s_{(x_1+1,x_2)} \s_{(x_1,x_2+1)}\s_{(x_1+1,x_2+1)}\,, \quad \textrm{for } x \in \cB(\L)\,.
\end{align}
Similarly,  for periodic boundary conditions on a box $\L=[0\!:\!L_1-1]\times [0\!:\!L_2-1]$, define $p^{\rm per}:\O_\L \to \O_{\L}$ by
\begin{align}
  p^{\rm per}_x(\s) = \prod_{y \in B_x}\s_y= \s_{(x_1,x_2)}\s_{(x_1+1,x_2)} \s_{(x_1,x_2+1)}\s_{(x_1+1,x_2+1)}\,, \quad \textrm{for } x \in \L\,,
\end{align}
where the sums $x_1{+}1$ and $x_2{+}1$ above are taken modulo $L_1$ and $L_2$ respectively.
We say there is a \emph{defect} in $\s$ at $x \in \cB({\L})$ if $p^{\t}_x(\s) = -1$ (similarly for periodic boundary conditions). By a small abuse of notation, we consider ``per" to be a boundary condition.

For $\s \in \O_\L$ or $\O_\L^\t$ let $|\s| = |\{x \in \L \,:\, \s_x = -1\}|$ denote the number of minus spins and $|p^{\t}(\s)| =  |\{x \in \cB(\L)\,:\, p^{\t}_x(\s) = -1\}|$ the number of defects (similarly for $|p^{\rm per}(\s)|$).
We define a partial order on plaquette variables with respect to defects by
\begin{align}
  p^\t(\s) \leq p^\t(\h) \iff \{x \in \cB(\L)\,:\, p^{\t}_x(\s) = -1\} \subset \{x \in \cB(\L)\,:\, p^{\t}_x(\h) = -1\}\,,
\end{align}
similarly for periodic boundary conditions.

We define the Hamiltonian $H_\L^{\t}: \O_{\L}^{\t}\to \bbR$ with boundary condition $\t$ by
\begin{align}
\label{eq:hamtau}
  H^{\t}_\L(\s) = -\frac{1}{2}\sum_{x\in \cB(\L)}p^{\t}_x(\s)\,,
\end{align}
and similarly for periodic boundary condition. The finite volume Gibbs measure on $\L$ with boundary condition $\t$ is then denoted by $\p_\L^{\t}$ and given by
\begin{align}
 \label{eq:pitau}
  \pi_{\L}^{\t}(\s) = \frac{e^{-\b H^{\t}_\L(\s)}}{Z^{\t}_\L(\b)}\,,
\end{align}
where $Z^{\t}_\L(\b) =\sum_{\s\in\O_\L^{\t}} e^{-\b H^{\t}_\L(\s)}$ is the partition function. The analogous formula gives the finite volume Gibbs measure $\pi_{\L}^{\rm per}$ for periodic boundary conditions. For brevity, if $\t \equiv \pm 1$ we will replace $\t$ with $\pm$; for example we write $H_\L^+$, $\pi^+_\L$ for plus boundary conditions.
Also, where there is no confusion, we denote by $+\in \O$ the configuration of all $+1$ spins.
When the boundary conditions and lattice are clear from the context, we may drop the boundary condition superscript and the lattice subscript.

It follows from (\ref{eq:hamtau}), (\ref{eq:pitau}) and  Lemmas \ref{lem:parity} and \ref{lem:parityPer} in Section \ref{SecPrelimResults}, that the measure induced by $\pi^{\t}_\L$ on the plaquette variables satisfy
\begin{align} \label{eq:defectprob}
\pi^{\t}_\L(p^{\t}(\s) = p) \propto e^{-\beta |p|}\1_{p^\tau(\O_\L^\t)}(p),\     \textrm{ for } p \in \O_{\cB(\L)}\,.
\end{align}
Also, since $|p^{+}(+)| = |p^{\rm{per}}(+)| = 0$, we will frequently use
\begin{align}
  \label{eq:simple}
  \pi_\L^{\t}(\s) =  \pi_\L^{\t}(+) e^{-\b |p^{\t}(\s)|}\,,\quad \textrm{ for } \s \in \O_{\L}^{\t},\quad \textrm{ and } \tau \in \{+,\rm{per}\}\,.
\end{align}

\subsection{Finite volume Glauber dynamics}

For a set $S \subset \L$ and $\s \in \config$, denote by $\s^{S}$ the configuration obtained by flipping all the spins of $\s$ that lie in $S$. With slight abuse of notation, we define $\s^{x} = \s^{\{x\}}$ for $x \in \L$.

Given a finite region $\L$ and boundary condition $\tau$, we consider the continuous time Markov process determined by the generator
\begin{align}
\label{eq:gen}
  \cL_{\L}^{\t} f(\sigma)=\sum_{x\in\L}c^{\tau}_{\L}(x,\sigma)(f(\s^x)-f(\s)) = \sum_{x\in\L}c^{\tau}_{\L}(x,\sigma)\nabla_xf(\s)\,,
\end{align}
where we define $\nabla_xf(\s) = (f(\s^x)-f(\s))$, and where the Metropolis spin-flip rates $c^{\tau}_\L(x,\sigma)$ are given by the formula
\begin{equation}
\label{eq:rates}
c^{\tau}_\L(x,\sigma)=
\begin{cases}
e^{-\beta(H_{\L}^{\t}(\sigma^x)-H_{\L}^{\t}(\sigma))} & \text{if } H_{\L}^{\t}(\sigma^x)>H^{\t}_{\L}(\sigma)\,,\\
1 & \text{otherwise.}
\end{cases}
\end{equation}
With a slight abuse of notation, we denote  the elements of the associated transition rate matrix by by $\cL_{\L}^{\t}(\s,\h)$, for $\s,\h \in \O_\L^\t$.
The process is  reversible with respect to the finite volume equilibrium measure $\pi_{\L}^{\t}$.
\begin{remark}
All our results hold equally well for the standard heat-bath dynamics, since $\left(1+e^{\beta(H_{\L}^{\t}(\sigma^x)-H_{\L}^{\t}(\sigma))} \right)^{-1} \asymp \min\{e^{-\beta(H_{\L}^{\t}(\sigma^x)-H_{\L}^{\t}(\sigma))},1\}$ for large $\beta$.
\end{remark}

Since $H^\t_\L(\s)$ only depends on the plaquette variables, the spin dynamics also induce a dynamics on these ``defect" variables which is Markov. The generator of the defect dynamics is given by
\begin{align}
  \label{eq:genplaq}
  \mathcal{Q}_\L^\t f(p) = \sum_{x\in \L} k^\t_\L(x,p)\left(f(p^{\overline{B}_{x}})- f(p)\right),
\end{align}
where we recall
\begin{align*}
  p^{\overline{B}_{x}}_z =
  \begin{cases}
    -p_z & \textrm{if } z \in \overline{B}_{x} = \{x-e_1-e_2,x-e_1,x-e_2,x \}\,,\\
    p_z & \textrm{otherwise.}
  \end{cases}
\end{align*}
From \eqref{eq:rates}, the transition rates for this process are given by
\begin{align}
  \label{eq:ratesp}
  k^\t_\L(x,p) = \min\left\{ {\rm exp}\left[\beta\left(|p|-|p^{\overline{B}_{x}}| \right)\right], 1\right\}\,.
\end{align}

\subsection{Measures of Mixing Rate}
We set notation for some common notions related to mixing rates. The Dirichlet form associated with $\cL_{\L}^{\t}$ is denoted by $\cD_\L^\t(f) = -\pi_{\L}^\t(f\cL_{\L}^{\t} f)$, and it satisfies the formula
\begin{align}
  \label{def:dir}
  \cD_\L^\t(f) = \frac{1}{2}\sum_{\h\in\O}\sum_{x\in \L}\pi_{\L}^\t(\h)c_\L^\tau(x,\h)\left(\nabla_x f(\h)\right)^2\,.
\end{align}
Define $\var_\L^\t(f)$ to be the variance of $f$ with respect to $\pi_\L^\t$.

\begin{defn}[Relaxation time]
The smallest positive eigenvalue of $-\cL^\t_\L$
is called the spectral gap and it is denoted by
$\gap(\cL_{\L}^\t)$. It satisfies the Rayleigh-Ritz variational principle
\begin{align}
\label{eq:gap}
\gap ( \cL^\t_\L):= \inftwo{f \,:\,\O_\L \mapsto \bbR}{f \text{ non constant} }  \frac{ \cD_\L^\t(f) }{\var_\L^\t(f) }\,.
\end{align}
The relaxation time $T^\t_{\rm rel} (\L)$ is defined as the inverse of the spectral gap:
\begin{equation}\label{rilasso}
T_{\rm rel}^\t  (\L)= \frac{1}{ \gap (\cL^\t_\L)}\,.
\end{equation}
If $\L = [1:L]^2$ we simply write $T_{\rm rel}^\t  (L)$.
\end{defn}

\begin{defn}[Total Variation Distance]
We denote the \textit{total variation} distance between two measures $\mu, \nu$ on a common $\sigma$-algebra $\mathcal{F}$ by:
\be 
\| \mu - \nu \|_{\rm TV} = \sup_{A \in \mathcal{F}} | \mu(A) - \nu(A)|.
\ee 
\end{defn}

\begin{defn}[Mixing time]
We denote by
\be
T_{\rm mix}^{\t}(\L) = \inf \{s > 0 \, : \, \max_{\sigma \in \config} \| e^{s \,\cL^{\t}_{\L}}(\sigma,\cdot) - \pi_{\L}^\t(\cdot) \|_{\rm TV} < 1/4 \}
\ee
the mixing time of $\cL^\t_\L$. If $\L = [1,L]^{2}$ we simply write $T_{\rm mix}^\t  (L)$.

\end{defn}

We define the critical scale as the correlation length for the product of spin variables in the infinite volume Gibbs measure, see \cite{Chleboun2017} for further details and other important length scales.

\begin{defn}[The critical scale]
We define the critical length scale by $L_c = \lfloor e^{\frac{\b}{2}} \rfloor$.
\end{defn}

\subsection{Heuristics for Mixing with + Boundary } \label{SecHeuristic}

Call a configuration \textit{metastable} if no plaquette has more than one defect, and \textit{unstable} otherwise. As the unstable states are short-lived, we concentrate on the transitions between ``nearby'' metastable states.

Due to parity constraints (see Lemma \ref{lem:parity}), the lowest-energy metastable configurations have exactly four defects, placed at the vertices of a rectangle.
Starting the SPM process in such a ``rectangular'' configuration, with height and width of the associated rectangle at least four, it is overwhelmingly likely that the next metastable configuration will be another ``rectangular'' configuration, with either height or width changed by exactly one. The typical intermediate dynamics between such metastable states are shown in Figure \ref{FigPairMove}.

\begin{figure}[htb]
\includegraphics[width=0.95\linewidth]{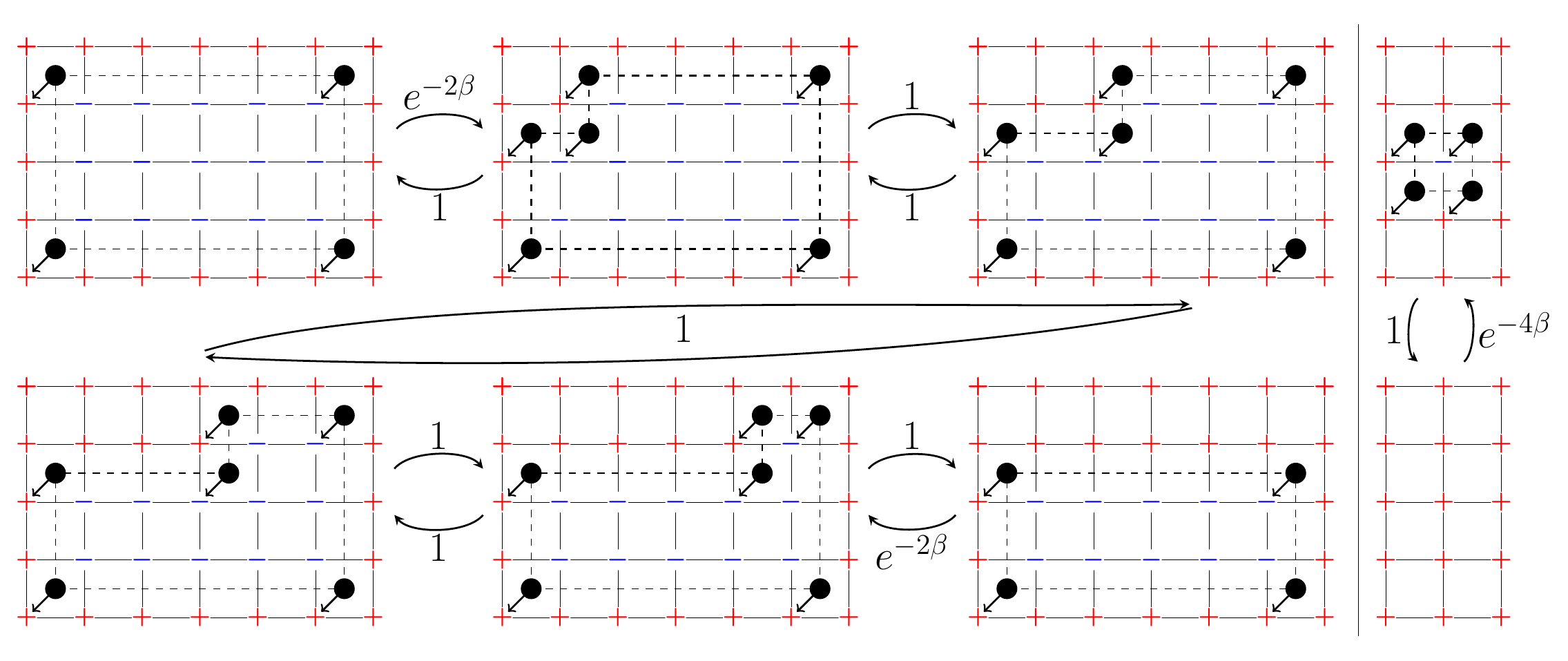}
\caption{\label{FigPairMove} The spin configuration is represented by $+$ (red) and $-$ (blue) on the lattice. The black circles represent defects ($-1$ plaquette variables), which are associated with the vertex at the bottom left of the corresponding plaquette. Dashed lines separate regions of $+$ and $-$ spins. The labeled arrows indicate the associated transition rates. An isolated defect creates two defects at rate $e^{-2\beta}$, subsequently a pair of defects is emitted and moves along an edge of the rectangle according to a simple random walk, and is then annihilated upon colliding with another defect (possibly the defect that emitted the pair). Right: the only type of transition not included on the left is to  add or remove four neighbouring defects. }
\end{figure}

Note that the ``rectangular'' configurations are entirely determined by the upper-left and lower-right vertices. Following the heuristic of Figure \ref{FigPairMove}, these corner defects perform nearly-independent simple random walks as shown in Figure \ref{FigCornerWalk}.
A routine SRW calculation says that this rectangle will collapse to the unique ground state in a time that scales like $\Theta \left( e^{3.5 \beta} \right)$.  Since typical configurations have few defects, it is natural to guess that the relaxation time of these simple configurations will be very close to the relaxation time of the full process. This turns out to be correct, and will guide our proof.

\begin{figure}[h]
\includegraphics[width=0.85\linewidth]{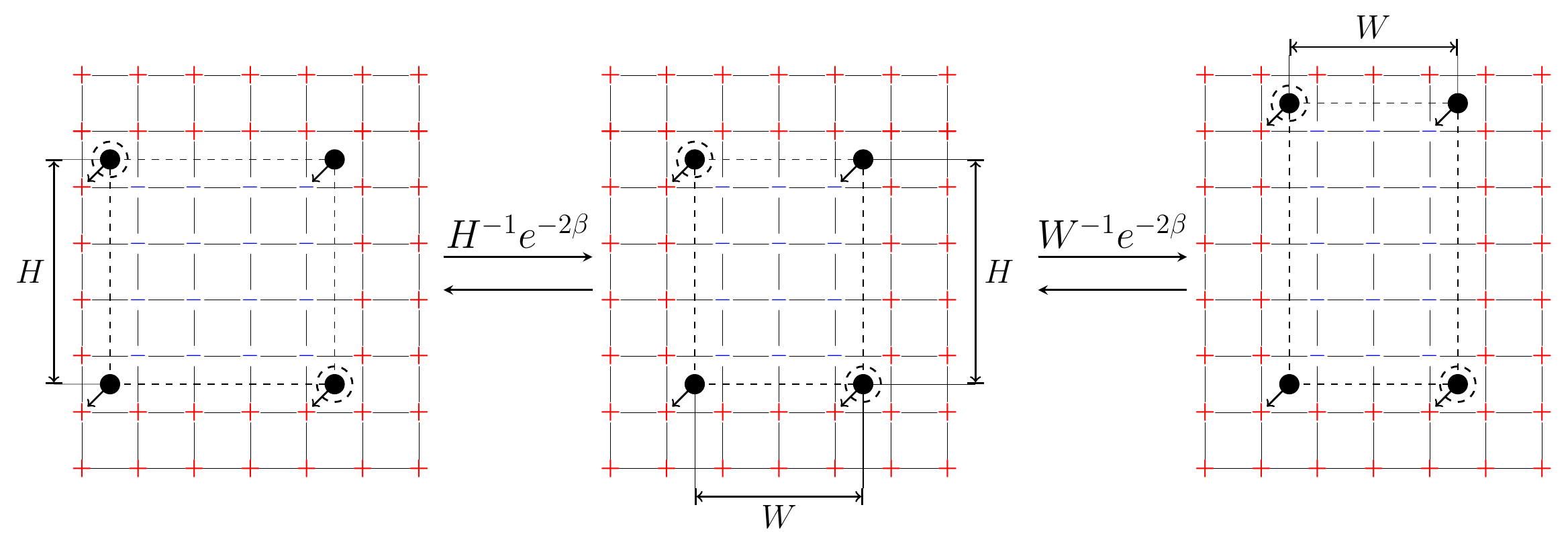}
\caption{\label{FigCornerWalk} The defects at the corners of a box perform random walks at different rates: they move left-right at rate $H^{-1} e^{-2 \beta}$ and up-down at rate $W^{-1} e^{-2 \beta}$, where $H$ is the height of the box and $W$ is its width. These random walks are close to independent for $\beta$ large.}
\end{figure}

The canonical path method (see Section \ref{SecCanonicalPathAbs}) can be used to extend this heuristic argument to configurations with many defects. If we restrict our attention to configurations whose defects form non-overlapping rectangles, the following canonical path construction would again give an $O(e^{3.5 \beta})$ bound on the relaxation time:

\begin{enumerate}
\item Pick a rectangle uniformly at random.
\item Follow a minimal-length path that collapses this rectangle without ever adding more than 2 defects to the initial configuration, as given by the heuristic in Figures \ref{FigPairMove} and \ref{FigCornerWalk}.
\item If any defects remain, go back to step \textbf{(1)}.
\end{enumerate}

In other words, if the rectangles don't overlap, we can essentially treat them as evolving independently. Unfortunately, configurations with overlapping rectangles are possible. Allowing for overlaps, this simple path has very high congestion and so gives very poor bounds (see Figure \ref{FigBadConfigNest}).
This problem occurs essentially becase the pairs of short-lived particles in Figure \ref{FigCornerWalk} can ``interact'' in this situation, and a similar problem is expected to occur in other plaquette models.
In Section \ref{SubsecPathLowDef} we will show that it is possible to salvage our heuristic calculations by choosing rectangles according to a much more complicated rule.
This rule is governed by a ``soft'' selection process that focuses on the most ``prominent" rectangles. We use this to show that, on average, these interactions between short-lived defect pairs don't contribute significantly to the congestion.
We suspect that this basic approach can be used to analyze similar interacting-particle systems (in particular other plaquette models).

\section{Main Results} \label{SecMainResults}

We give bounds on the mixing and relaxation time of the square plaquette model on the critical scale:

\begin{thm} [Mixing and Relaxation Times for Plus Boundary Conditions] \label{ThmMainResPlus}

The square plaquette process with all plus boundary conditions, on the critical scale,  satisfies
\be \label{IneqPlusRel}
e^{3.5 \beta} \lesssim T_{\rm rel}^+( L_c ) \lesssim \beta^{6} e^{3.5 \beta}.
\ee
Furthermore,
\be \label{IneqPlusMix}
e^{3.5 \beta} \lesssim T_{\rm mix}^+( L_c ) \lesssim \beta^{9} e^{4 \beta}.
\ee
\end{thm}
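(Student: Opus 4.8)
The plan is to prove the four bounds in Theorem~\ref{ThmMainResPlus} separately, pairing each pair of inequalities with a distinct tool. For the two \emph{upper} bounds I would build a single multi-commodity flow on the configuration space $\O_{L_c}^+$, route it through the canonical-path machinery of Section~\ref{SecConCanPath}, and then feed the resulting bounds into two different comparison principles: the ordinary Sinclair bound (with the extra edge-weighting that records the number of defects) to control the spectral gap, and the spectral profile method of \cite{Goel2006} to upgrade this to a mixing-time bound. For the relaxation-time upper bound $T_{\rm rel}^+(L_c)\lesssim\beta^6 e^{3.5\beta}$, the flow is the one sketched heuristically in Section~\ref{SecHeuristic}: from any $\s$ with a rectangular defect pattern one selects a ``prominent'' rectangle according to the soft selection rule of Section~\ref{SubsecPathLowDef}, collapses it via the pair-emission random-walk moves of Figures~\ref{FigPairMove}--\ref{FigCornerWalk} while never exceeding two extra defects, and iterates; for general $\s$ one first reduces to rectangular configurations using the concentration estimate for $|p^+(\s)|$ from Section~\ref{SecPrelimResults}. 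The congestion of each edge is dominated by $\pi^+(+)\,e^{-\b|p^+|}$ times a polynomial path-length factor, and the bottleneck edges — those touching high-defect configurations where short-lived defect pairs interact — are shown to carry only $\mathrm{poly}(\b)$ congestion because the soft selection rule makes such interactions rare on average. The mixing-time upper bound $T_{\rm mix}^+(L_c)\lesssim\beta^9 e^{4\beta}$ then follows from the spectral profile inequality: one needs a bound on $\gap$ restricted to small sets, and for sets $A$ with $\pi^+(A)\le e^{-\b}$ (roughly, sets avoiding the ground state) the same flow gives an improved restricted-gap bound, because collapsing the \emph{last} rectangle down to $+$ costs an extra factor $e^{\b/2}$ of congestion relative to interior rectangles; integrating the profile $\Lambda(r)$ against $dr/r$ over $r\in[\pi^+(+),1/4]$ produces the $e^{4\beta}$ rate with the stated polynomial correction.

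For the two \emph{lower} bounds I would use the variational characterization \eqref{eq:gap} with an ad hoc test function. For $T_{\rm rel}^+(L_c)\gtrsim e^{3.5\beta}$, take $f(\s)$ to be a function of the rectangular defect configuration — e.g. (a truncated version of) the perimeter or a coordinate of the prominent rectangle — so that $f$ has $\var_{L_c}^+(f)=\Omega(1)$ (typical configurations carry a macroscopic rectangle since $L_c\asymp e^{\b/2}$ is exactly the correlation length), while each allowed spin flip changes $f$ by at most $1$ and has rate at most $e^{-2\beta}$, except for $O(1)$ many high-rate flips; the Dirichlet form \eqref{def:dir} is then $O(e^{-2\beta}\cdot e^{-\b/2})=O(e^{-3.5\beta})$ once one accounts for the $e^{-\b/2}$ probability cost of being in a configuration where the relevant move is available, giving the claimed bound. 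The same test function, together with the standard bound $T_{\rm mix}\ge \frac{1}{2}(T_{\rm rel}-1)\log 2$ (or a direct distinguishing-statistic argument), yields $T_{\rm mix}^+(L_c)\gtrsim e^{3.5\beta}$, which is exactly the lower bound asserted in \eqref{IneqPlusMix}.

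The main obstacle is the upper bound on congestion in Step~(1)--(3) of the flow, precisely the point flagged in the introduction: the naive ``collapse one rectangle at a time'' flow has catastrophic congestion on edges adjoining configurations with nested or overlapping rectangles, because a single such edge can lie on the canonical paths of exponentially many $(\s,\h)$ pairs whose short-lived defect pairs collide there. Making the soft selection rule precise — assigning to each rectangle a weight that decays fast enough in its ``non-prominence'' that the total flow through any fixed bad edge telescopes to $\mathrm{poly}(\b)$, yet slow enough that every configuration still routes a full unit of flow to $+$ — is the delicate combinatorial heart of the argument, and is where essentially all the work in Section~\ref{SecConCanPath} goes. A secondary technical point is that the spectral-profile computation requires the improved \emph{restricted} congestion bound (not just the global one) to get $e^{4\beta}$ rather than $e^{3.5\beta}$ for mixing, so one must track how the congestion of the flow degrades as the target set shrinks toward $\{+\}$; this accounts for the gap between the exponents $3.5$ and $4$ in \eqref{IneqPlusMix} and is expected to be tight only up to the polynomial factors.
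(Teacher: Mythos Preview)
Your upper-bound strategy is essentially the paper's: a multicommodity flow built from the rectangle-removal paths of Section~\ref{SecConCanPath}, plugged into Lemma~\ref{LemmaMultiSpec} with $S=\config\setminus\{+\}$ for the relaxation time, and into the spectral-profile integral~\eqref{IneqMainSpectralProfileBound} (via the defect-dependent congestion bound~\eqref{IneqMainCanonicalPathConclusion}) for the mixing time. Your description of the soft selection rule and of the bottleneck being interacting short-lived pairs is accurate; the one imprecision is that the paper does \emph{not} first reduce general $\sigma$ to rectangular configurations---the flow is defined directly for arbitrary defect patterns via the split construction (Definition~\ref{DefBalancedSplits}) and the reconstructable-rectangle map $F$ (Definition~\ref{def:ReconRecs}).

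Your lower-bound argument, however, has a genuine gap. You assert $\var_{L_c}^+(f)=\Omega(1)$ on the grounds that ``typical configurations carry a macroscopic rectangle since $L_c\asymp e^{\beta/2}$ is the correlation length.'' This is false: by Lemma~\ref{lem:dom}, $\pi^+(+)=1-O(e^{-2\beta})$, so the ground state dominates and any nontrivial test function has variance $o(1)$. The paper's actual test function is $f(\sigma)=g(|\sigma|/L^2)$ for a fixed piecewise-linear $g$, and its variance is only $\Theta(e^{-2\beta})$---coming from the $\Theta(L^4)$ four-defect rectangle configurations of area $\asymp L^2$, each carrying mass $\Theta(e^{-4\beta})$. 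The matching Dirichlet-form bound is therefore not $O(e^{-3.5\beta})$ but $O(e^{-5.5\beta})$: the dominant contribution (Case~5 in Section~\ref{sec:lwrbnd}) comes from the $O(L^5)$ six-defect configurations with two adjacent defects, each having $\pi$-mass $e^{-6\beta}$, rate-$1$ moves, and squared gradient $O(L^{-4})$, giving $L^5\cdot e^{-6\beta}\cdot L^{-4}\asymp e^{-5.5\beta}$. The ratio then recovers $e^{3.5\beta}$. Your sketch gets neither the variance scale nor the Dirichlet-form scale right (and the displayed arithmetic $e^{-2\beta}\cdot e^{-\beta/2}=e^{-3.5\beta}$ is also off), so the lower-bound argument as written does not go through.
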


\begin{thm} [Mixing and Relaxation Times for Periodic Boundary Conditions] \label{ThmMainResPer}

The square plaquette process with periodic boundary conditions, on the critical scale, satisfies
\be \label{IneqPerRel}
e^{4 \beta} \lesssim T_{\rm rel}^{\rm per}( L_c ) \lesssim \beta^{4} e^{4 \beta}.
\ee
Furthermore,
\be  \label{IneqPerMix}
e^{4 \beta} \lesssim T_{\rm mix}^{\rm per}( L_c ) \lesssim \beta^{9} e^{4 \beta}.
\ee
\end{thm}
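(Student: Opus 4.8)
The plan is to reduce the periodic-boundary problem to a random walk on the $2^{2L_c-1}$ ground states. First I would establish the lower bounds. For the lower bound on $T_{\rm rel}^{\rm per}$, the natural test function is one that distinguishes two ground states differing by a single row (or column) flip: say $f(\sigma) = \sigma_{(0,0)} \sigma_{(1,0)} \cdots$ along a suitable segment, or more robustly a function of the plaquette variables in one row that detects whether that row has been ``flipped'' relative to a reference. Under $\pi^{\rm per}_{L_c}$ the variance of such an $f$ is $\Theta(1)$ by the concentration result for the number of defects (Section \ref{SecPrelimResults}), since typical configurations are close to a ground state. The Dirichlet form $\cD^{\rm per}_{L_c}(f)$ is small because to change the value of $f$ one must move a pair of defects all the way across a row of length $L_c \asymp e^{\beta/2}$; each such move has rate at most $e^{-2\beta}$ and there are $\asymp L_c^2 = e^{\beta}$ spins, and the bottleneck argument gives $\cD^{\rm per}_{L_c}(f) \lesssim e^{-2\beta} \cdot L_c^{-2} \cdot (\text{number of relevant edges}) \asymp e^{-4\beta}$ up to polynomial factors — here I would be careful to extract the full $e^{4\beta}$ rather than $e^{3.5\beta}$, which comes from the fact that to flip an entire row one needs to nucleate, traverse \emph{and} close a pair across a distance $L_c$, costing an extra factor $L_c \asymp e^{\beta/2}$ beyond the plus-boundary collapse time. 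The mixing-time lower bound $T_{\rm mix}^{\rm per} \gtrsim e^{4\beta}$ follows from $T_{\rm mix} \geq (T_{\rm rel} - 1)\log 2$, or directly from the same bottleneck/conductance estimate via the standard bound $T_{\rm mix} \gtrsim 1/\Phi_\star$.

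For the upper bounds I would use the comparison with simple random walk on the hypercube $\{0,1\}^{2L_c-1}$, exactly as announced in the introduction. The key structural input is that the ground states with periodic boundary conditions are precisely the $2^{2L_c-1}$ configurations obtained from the all-plus state by flipping an arbitrary subset of rows and an arbitrary subset of columns (the $-1$ in the exponent being the identification of ``flip all rows and all columns'' with ``flip nothing''). I would first show, using the canonical-path / spectral-profile machinery of Section \ref{SecConCanPath} localized to a neighbourhood of the ground-state manifold, that the trace process of the dynamics on the set $\mathcal{G}$ of ground states is, up to polynomial factors in $\beta$, a continuous-time random walk on the hypercube in which each of the $2L_c$ generators (flip row $i$, flip column $j$) fires at rate $\asymp L_c^{-1} e^{-2\beta} \cdot (\text{something}) = \Theta(e^{-4\beta})$ up to $\mathrm{poly}(\beta)$: indeed, to go from one ground state to an adjacent one the process must nucleate a defect pair (rate $e^{-2\beta}$), have it perform a SRW traversal of a loop of length $L_c$ (probability $\asymp L_c^{-1}$ of closing up correctly, time $\asymp L_c^2 = e^\beta$ per successful excursion, so effective rate $\asymp e^{-2\beta} L_c^{-2} \cdot L_c = e^{-2\beta}L_c^{-1}$) — assembling these gives the $e^{-4\beta}$ scale. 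Combined with the $\log(2L_c) \asymp \beta$ mixing time of SRW on the hypercube with these rates, one gets $T_{\rm mix}^{\rm per} \lesssim \beta \cdot e^{4\beta} \cdot \mathrm{poly}(\beta)$, and similarly $\gap \gtrsim e^{-4\beta}/\mathrm{poly}(\beta)$.

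The two remaining ingredients are: (i) controlling the ``non-ground-state'' part, i.e. showing the process reaches the ground-state manifold quickly — this is handled by the canonical-path construction of Section \ref{SubsecPathLowDef} (the same soft-selection flows used for the plus-boundary case), which shows that collapsing all defects down to a ground state costs only $\beta^{O(1)} e^{3.5\beta}$, strictly faster than the $e^{4\beta}$ inter-ground-state scale, so it does not dominate; and (ii) passing from the trace process back to the full chain, for which I would invoke the results relating mixing times to hitting times of large sets (\cite{Oliveira2012,Peres2015}) exactly as flagged in the introduction: since $\pi^{\rm per}_{L_c}(\mathcal{G}^{\mathrm{nbhd}})$ is bounded below (by the defect-concentration estimate, typical configurations are within $O(1)$ defects of $\mathcal{G}$), hitting times of a suitable large set controlled by the trace chain yield the full mixing time up to the usual polynomial losses. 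I expect the main obstacle to be step (i)–(ii) interface, namely proving \emph{rigorously} that the induced dynamics on the ground states genuinely behaves like hypercube SRW with the claimed $\Theta(e^{-4\beta})$ rates and without hidden slow modes coming from configurations that are ``between'' ground states with several partially-completed row/column flips — i.e. showing the excursions that change different rows/columns decouple well enough (up to $\mathrm{poly}(\beta)$) that the comparison to the hypercube is valid in both directions. The lower bound's matching constant ($e^{4\beta}$, not $e^{3.5\beta}$) is also a delicate point: it requires the observation that, unlike with plus boundary conditions where a single bounded rectangle collapses in time $e^{3.5\beta}$, here the relevant ``defect loop'' necessarily wraps the torus and so has length forced to be $\asymp L_c$, contributing the extra $e^{\beta/2}$ in each of two places.
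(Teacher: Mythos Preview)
Your overall architecture matches the paper's proof closely: lower bound via a test function adapted to the hypercube structure of the ground states, upper bound by comparing the trace of the process on $\mathcal{G}$ to simple random walk on $\{-1,+1\}^{2L_c-1}$, then lifting back to the full chain via the hitting-time characterization of \cite{Oliveira2012,Peres2015}. That is exactly what the paper does in Section~\ref{SecPerBoundRes}.

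There is, however, a genuine arithmetic error in your upper-bound heuristic that you should fix before it propagates. From a ground state, flipping a single spin creates \emph{four} defects, not two, so the nucleation rate is $e^{-4\beta}$ per site, i.e.\ total rate $L_c^2 e^{-4\beta}\asymp e^{-3\beta}$ to leave $\mathcal{G}$. Your line ``nucleate a defect pair (rate $e^{-2\beta}$) \ldots\ effective rate $\asymp e^{-2\beta}L_c^{-1}$'' gives $e^{-5\beta/2}$, which does not assemble to $e^{-4\beta}$ as you then assert. The correct bookkeeping (as in the paper's Lemma~\ref{LemmaExpectedExcursionPerGr}) is: waiting time to leave $\mathcal{G}$ is $\asymp L_c^{-2}e^{4\beta}\asymp e^{3\beta}$; each excursion returns to $\mathcal{G}\setminus\{\sigma\}$ with probability $\asymp L_c^{-1}$; hence the expected time to a \emph{new} ground state is $\asymp L_c\cdot e^{3\beta}=e^{3.5\beta}$. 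Multiplying by the $O(L_c\log L_c)$ steps the hypercube trace needs gives $e^{4\beta}$ up to polylogs.

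For the lower bound, your single-row indicator could be made to work, but as stated it is too vague: you have not specified how to extend it off $\mathcal{G}$, and the Dirichlet-form sketch does not isolate which edges contribute. The paper instead takes $\hat g(u)=\bigl|\,|u|-\tfrac{2L-1}{2}\,\bigr|$ on the hypercube, extends it \emph{linearly} along the minimal-energy paths $\{R(\sigma,\eta,m,k)\}_{k}$ between neighbouring ground states, and sets it to zero elsewhere. This gives $\var\gtrsim L_c$ and forces the gradient along every relevant edge to be $O(L_c^{-1})$, so the dominant Dirichlet contribution is the $S_2$ term $\lesssim e^{-7\beta/2}$, yielding $T_{\rm rel}\gtrsim L_c\, e^{7\beta/2}=e^{4\beta}$. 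If you want to use a single-coordinate function you will need a similarly explicit extension to control $\mathcal{D}$.
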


\begin{remark}
	With some extra technical effort in the canonical paths argument, the upper bound on the mixing time with plus boundary conditions can be improved (see \cite{SPMcutoff} where it is shown that $T_{\rm mix}^+( L_c ) \lesssim \beta^{7} e^{3.75 \beta}$).
	We conjecture that the lower bound given here is correct up to polynomial factors in $\beta$.
\end{remark}

\begin{remark}
	We recall that the fundamental reason for the different scaling of the relaxation and mixing time with all plus and periodic boundary conditions is the structure of the ground states. 
	With all plus boundary conditions there is a unique ground state (in which all sites have spin $+1$), and the low temperature dynamics are dominated by the time to reach the ground state.
	On the other hand, with periodic boundary conditions, the low temperature dynamics are dominated by an induced random walk on the $2^{2L-1}$ ground states.
	It turns out that this random walk behaves very similarly to the standard hypercube random walk.
\end{remark}

\begin{figure}[tb]
\includegraphics[width=0.9\linewidth]{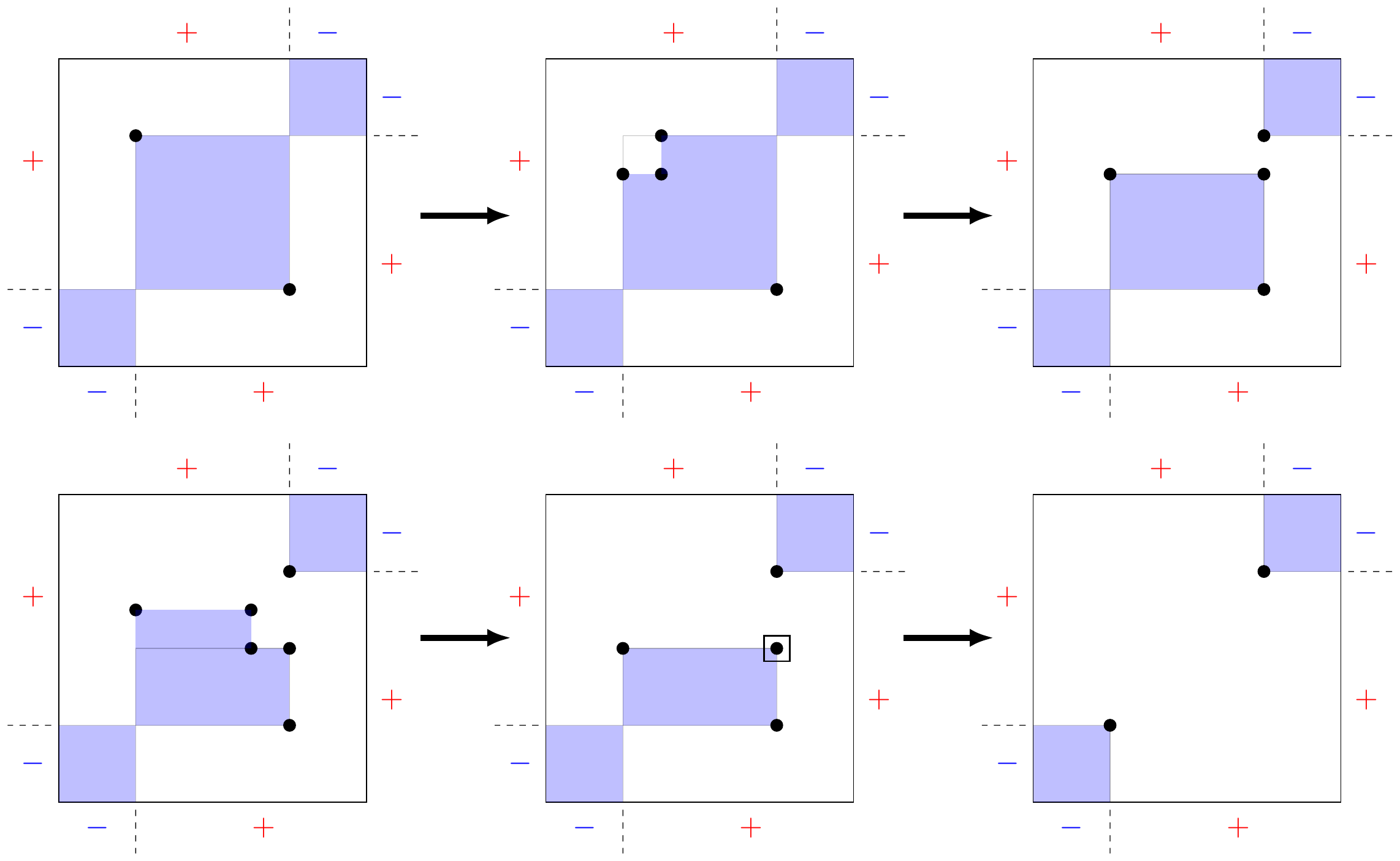}
\caption{\label{FigBad} Two well separated ground states (top left and bottom right), with a typical path between them, as described in Remark \ref{Rem:bad}. The shaded blue region indicates the $-$ spins, and the empty regions are $+$ spins.}
\end{figure}
\begin{remark}
\label{Rem:bad}
For certain special choices of boundary conditions, there exist a small number of ground states that are well separated, in the sense that it takes an extremely long time to switch between them. In the case of the boundary conditions given in Figure \ref{FigBad} we conjecture
  \begin{align*}
 e^{4.5 \beta} \lesssim   T_{\rm rel}^{\t_{\rm bad}}(L_c).
  \end{align*}
  The heuristic behind this bound is as follows.
  Typical paths between the two ground states are of the form described in the figure.
  Initially two new defects are created, at one of the existing defects, which occurs with rate $e^{-2\beta}$.
Subsequently, with probability $1/L$, a pair of defects is not immediately re-absorbed, and travels a distance of order $L$.
At some point during this excursion (top right frame of Fig. \ref{FigCornerWalk}) the process may generates two new defects, with probability $\approx e^{-2\beta}L^2$.
Subsequently, with probability  $1/L$ again, these defects reach one of the isolated defects before being re-absorbed, leading to the stable configuration in the bottom middle Fig \ref{FigCornerWalk}.
  Finally the defect marked with a square box makes a two dimensional random walk following the same mechanism as given in Figure \ref{FigCornerWalk}.
 The projection of the position of this defect onto the diagonal (bottom left to top right) is a martingale, and so with probability $O(1/L)$ this defect will travel a distance $\Omega(L)$ before being re-absorbed in the previous ground state.
Putting all these factors together gives the heuristic bound.
\end{remark}

\section{Preliminary Calculations and Notation} \label{SecPrelimResults}



In this section, we make some simple observations about which configurations are possible and likely at stationarity. We first observe that only defect configurations satisfying certain parity constraints are possible, and these depend on the boundary conditions (Lemmas \ref{lem:parity}, and \ref{lem:parityPer}). These parity conditions  immediately imply some rough bounds on the number of configurations with a given number of defects (Lemma \ref{LemmaExtensionWoo}), which in turn implies that the ground state has high probability under the stationary measure (Lemma \ref{lem:dom}). \\

In the case $\L = [1 : \ell_{1}] \times[1 : \ell_{2}]$, we have $\cB(\L) = [0 : \ell_1] \times [0 : \ell_2]$.
In this case we denote all the sites belonging to the $i^{\mathrm{th}}$ column of $\cB(\L)$ by $C_i = \{i\}\times [0 : \ell_2]$, and the  $j^{\mathrm{th}}$ row by $R_j = [0 : \ell_1]\times \{j\}$.
For $x,y \in \cB(\L)$ we write $y \ll x$ if $y_1 < x_1$ and $y_2 < x_2$.

\begin{lemma}[$\t$-b.c. parity constraints]
\label{lem:parity}
If $\L=[1\! :\! \ell_1]\times[1\! :\! \ell_2]$ then for boundary condition $\t \in \O$
\begin{align*}
  p^\t(\O_\L^\t) = \{p \in  \Omega_{\cB(\L)}\,:\, p \textrm{ satisfies \eqref{eq:par1} and \eqref{eq:par2} below}\} \,,
\end{align*}
where the parity constraints are given by
\begin{align}
  \prod_{x\in C_i}p_x&= \t_{(i,0)}\t_{(i+1,0)}\t_{(i,\ell_2+1)}\t_{(i+1,\ell_2+1)}\,, \quad \forall\, i \in [0: \ell_1] \quad \textrm{and,}\label{eq:par1}\\
 \prod_{x\in R_j}p_x&= \t_{(0,j)}\t_{(0,j+1)}\t_{(\ell_1+1,j)}\t_{(\ell_1+1,j+1)}\,,\quad \forall\, j\in [0:\ell_2] \,. \label{eq:par2}
\end{align}
Furthermore, $p^\t:\O_\L^\t \to p^\t(\O_\L^\t)$ is bijective, and the inverse may be written as
\begin{align}
  \label{eq:inverse}
 (p^\t)^{-1}(p)_x= \s^\t(p)_x :=
  \begin{cases}
    \t_{(0,0)}\t_{(x_1,0)}\t_{(0,x_2)}\prod_{y\ll x}p_{y}& \textrm{if } x \in \L,\\
    \t_x & \textrm{otherwise,}
  \end{cases}
\end{align}
for $p\in p^\t(\O_\L^\t)$.
\end{lemma}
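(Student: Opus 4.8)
The statement has three parts: (i) the image of $p^\t$ is exactly the set of plaquette configurations satisfying the column and row parity constraints \eqref{eq:par1}, \eqref{eq:par2}; (ii) $p^\t$ is a bijection onto this image; (iii) the inverse is given by the explicit formula \eqref{eq:inverse}. I would handle these in the order (necessity of the constraints) $\Rightarrow$ (the formula \eqref{eq:inverse} is a well-defined inverse) $\Rightarrow$ (injectivity and surjectivity, hence the image description and bijectivity all follow at once).

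\textbf{Step 1: the constraints are necessary.} Fix $\s \in \O_\L^\t$ and $i \in [0:\ell_1]$. Consider the product $\prod_{x \in C_i} p^\t_x(\s) = \prod_{x_2 = 0}^{\ell_2} \s_{(i,x_2)}\s_{(i+1,x_2)}\s_{(i,x_2+1)}\s_{(i+1,x_2+1)}$ using \eqref{EqDefectMap}. Every site $\s_{(i,k)}$ for $k$ strictly between $1$ and $\ell_2$ (and likewise $\s_{(i+1,k)}$) appears exactly twice in this product (once from the plaquette at height $k$ and once from the plaquette at height $k-1$), so it cancels; only the four ``extreme'' sites $\s_{(i,0)}, \s_{(i+1,0)}, \s_{(i,\ell_2+1)}, \s_{(i+1,\ell_2+1)}$ survive, and these lie in $\L^c$ (since the columns of $\L$ are $[1:\ell_2]$ in the second coordinate), so they equal $\t_{(i,0)}, \t_{(i+1,0)}, \t_{(i,\ell_2+1)}, \t_{(i+1,\ell_2+1)}$ respectively. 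This gives \eqref{eq:par1}; \eqref{eq:par2} is the same telescoping argument applied to rows. Hence $p^\t(\O_\L^\t)$ is contained in the constrained set.

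\textbf{Step 2: \eqref{eq:inverse} inverts $p^\t$ on its image, and recovers the boundary.} Define $\s^\t(p)$ by \eqref{eq:inverse}. First note that for $x \in \L^c$ we have $\s^\t(p)_x = \t_x$ by definition, so $\s^\t(p) \in \O_\L^\t$. It remains to check $p^\t(\s^\t(p)) = p$ for every $p$ in the constrained set, i.e. that $\prod_{y \in B_x} \s^\t(p)_y = p_x$ for all $x \in \cB(\L)$. The key algebraic fact is that for $x \in \L$ the quantity $\prod_{y \ll x} p_y$ behaves like a discrete ``double antiderivative'': taking the alternating product over the four corners of $B_x$ of the function $a(x) := \t_{(0,0)}\t_{(x_1,0)}\t_{(0,x_2)}\prod_{y \ll x} p_y$ produces a single factor $p_x$, because the products $\prod_{y \ll \cdot}$ over the four corners differ by exactly the row/column of $p$-values indexed by the relevant strip, and the constraints \eqref{eq:par1}, \eqref{eq:par2} are precisely what is needed to make the computation close up correctly when one or more corners of $B_x$ fall outside $\L$ (i.e. when $x$ is on the bottom row or left column of $\cB(\L)$, or at the top/right edges). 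I would organize this as: (a) the ``interior'' case where all four sites of $B_x$ lie in $\L$, handled by direct cancellation using $a(x_1+1,x_2+1)a(x_1,x_2) / (a(x_1+1,x_2)a(x_1,x_2+1))$ (thinking multiplicatively over $\pm 1$) $= p_x$; (b) the boundary cases where $x_1 \in \{0,\ell_1\}$ or $x_2 \in \{0,\ell_2\}$, where some $\s^\t(p)_y$ are replaced by the corresponding $\t_y$ and one invokes \eqref{eq:par1} or \eqref{eq:par2} to match. This is the bookkeeping-heavy step and is where I expect the main (though purely mechanical) effort to lie.

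\textbf{Step 3: conclude.} Step 2 shows $p^\t$ restricted to $(\s^\t)(\{p : \text{constraints hold}\})$ is surjective onto the constrained set, so the constrained set $\subseteq p^\t(\O_\L^\t)$; combined with Step 1, $p^\t(\O_\L^\t)$ equals the constrained set exactly, proving (i). Injectivity of $p^\t$ on $\O_\L^\t$ also follows: $\O_\L^\t$ and $p^\t(\O_\L^\t)$ both have finite cardinality, and a trivial dimension/counting argument or the explicit left inverse $\s^\t \circ p^\t = \mathrm{id}$ (which Step 2's computation, read in the other direction, also gives — applying \eqref{eq:inverse} to $p = p^\t(\s)$ returns $\s$ by the same telescoping identity as in Step 1) shows $p^\t$ is a bijection with inverse $\s^\t$, proving (ii) and (iii). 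The only genuine subtlety to keep straight throughout is the indexing convention $\cB(\L) = [0:\ell_1]\times[0:\ell_2]$ versus $\L = [1:\ell_1]\times[1:\ell_2]$, so that one correctly identifies which sites of a plaquette $B_x$ are inside $\L$ and which are boundary sites; once that is fixed, every identity is a one-line telescoping cancellation over $\{\pm 1\}$.
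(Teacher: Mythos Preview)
Your proposal is correct and follows essentially the same route as the paper: show the parity constraints are necessary by telescoping the product of plaquette variables along a column/row, then verify that the formula \eqref{eq:inverse} gives a right inverse to $p^\t$ on the constrained set, and finish by a cardinality count (the paper simply asserts $|\O_\L^\t| = |\{p : \text{constraints}\}| = 2^{|\L|}$). The paper packages the cancellation in Step~2 via the symmetric-difference identity $\prod_{x\in A}\s_x\prod_{y\in B}\s_y=\prod_{x\in A\triangle B}\s_x$ and writes the verification as a single line, whereas you (rightly) flag that the boundary cases require separate bookkeeping; in fact only the \emph{top and right} edges of $\cB(\L)$ genuinely invoke \eqref{eq:par1}--\eqref{eq:par2}, while the bottom and left edges close up using just the $\t$-prefactors in \eqref{eq:inverse}.
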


\begin{lemma}[Periodic b.c. parity constraints]
\label{lem:parityPer}
If $\L=[0: \ell_1]\times[0:\ell_2]$ then for periodic boundary conditions
\begin{align*}
  p^{\mathrm{per}}(\O_\L) = \{p \in  \Omega_{\L}\,:\, p \textrm{ satisfies \eqref{eq:parper} }\} \,,
\end{align*}
where the parity constraints are given by
\begin{align}
  \prod_{x\in C_i}p_x&= \prod_{x\in R_j}p_x= 1\,, \quad \forall\, i \in [0:\ell_1-1] \quad \textrm{and} \quad \forall\, j\in [0:\ell_2-1] \,. \label{eq:parper}
\end{align}
and for any fixed $\t \in \{-1,1\}^{C_0 \cup R_0}$, we have  $p^{\mathrm{per}}:\{\s\in\O_\L\,:\, \s_{C_0\cup R_0}\equiv \tau\} \to p^{\mathrm{per}}(\O_\L)$ is bijective.
\end{lemma}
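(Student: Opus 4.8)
The plan is to mirror the proof of Lemma \ref{lem:parity}, but to exploit the extra translation symmetry that periodic boundary conditions provide. First I would establish the parity constraints \eqref{eq:parper} are necessary: fix $\s \in \O_\L$ and a column index $i \in [0:\ell_1-1]$, and compute $\prod_{x \in C_i} p^{\rm per}_x(\s) = \prod_{x_2=0}^{\ell_2} \s_{(i,x_2)}\s_{(i+1,x_2)}\s_{(i,x_2+1)}\s_{(i+1,x_2+1)}$, where all second coordinates are taken modulo $L_2 = \ell_2+1$. Each spin $\s_{(i,x_2)}$ appears exactly twice in the telescoping product (once as $\s_{(i,x_2)}$ from the plaquette at $(i,x_2)$ and once as $\s_{(i,x_2+1)}$ from the plaquette at $(i,x_2-1)$), since the second coordinate wraps around; similarly for the $(i+1)$-column spins. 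Hence the product collapses to $1$. The row constraint follows identically by symmetry. This shows $p^{\rm per}(\O_\L) \subseteq \{p : p \text{ satisfies } \eqref{eq:parper}\}$.

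For the reverse inclusion and the bijectivity statement, I would fix $\t \in \{-1,1\}^{C_0 \cup R_0}$ and exhibit an explicit inverse. Given $p$ satisfying \eqref{eq:parper}, define $\s_x$ on $C_0 \cup R_0$ to agree with $\t$, and for $x = (x_1,x_2)$ with $x_1, x_2 \geq 1$ set $\s_x = \t_{(0,0)} \t_{(x_1,0)} \t_{(0,x_2)} \prod_{y \ll x} p_y$ exactly as in \eqref{eq:inverse} (with $\L$ now the periodic box, and $y \ll x$ meaning $0 \leq y_1 < x_1$ and $0 \leq y_2 < x_2$). One checks directly that $p^{\rm per}_x(\s) = p_x$ for all $x$ with $x_1 \leq \ell_1-1$ and $x_2 \leq \ell_2-1$ by the same telescoping computation used in Lemma \ref{lem:parity} (the non-wrapping plaquettes behave identically to the fixed-boundary case). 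The content of the lemma is then that the constraints \eqref{eq:parper} force the remaining plaquette identities --- those involving the wrap-around at column $\ell_1$ or row $\ell_2$ --- to hold automatically. This is where the hypotheses \eqref{eq:parper} get used: the column product constraint $\prod_{x \in C_i} p_x = 1$ for each $i$ lets one propagate the spin values consistently around the torus in the $e_1$ direction, and likewise the row constraints in the $e_2$ direction; together with a single compatibility check at the corner, these guarantee the candidate $\s$ is well-defined on the torus and realizes $p$. Injectivity of $p^{\rm per}$ on $\{\s : \s_{C_0 \cup R_0} \equiv \t\}$ is immediate from the formula, since $\s$ is determined by $\t$ and $p$; surjectivity onto $p^{\rm per}(\O_\L)$ is what we just verified.

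The main obstacle is the wrap-around consistency check: unlike the fixed-boundary case, where \eqref{eq:inverse} is manifestly well-defined on a simply-connected rectangle, here one must verify that reading the spin values around the two independent cycles of the torus gives a consistent answer. Concretely, the value $\s_{(\ell_1, x_2)}$ can be reconstructed either from the prefix product $\prod_{y \ll (\ell_1,x_2)} p_y$ or by using the plaquette relations at column $\ell_1-1$; equality of the two hinges precisely on $\prod_{x \in C_i} p_x = 1$. The bookkeeping --- tracking which spins are fixed by $\t$, which are defined by the prefix-product formula, and confirming the $\ell_1$-column and $\ell_2$-row plaquette identities close up --- is the delicate part, but it is entirely parity arithmetic of the same flavor as Lemma \ref{lem:parity}. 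Note finally a dimension count sanity check: $\dim$ of $\{\s : \s_{C_0\cup R_0} \equiv \t\}$ is $\ell_1 \ell_2$, while the number of independent constraints in \eqref{eq:parper} is $(\ell_1+1) + (\ell_2+1) - 1 = \ell_1 + \ell_2 + 1$ (one relation among the $\ell_1+1$ column products and $\ell_2+1$ row products is redundant, since both equal $\prod_{x\in\L} p_x$), so the constrained configuration space and the constrained plaquette space have matching cardinality $2^{\ell_1\ell_2}$, consistent with the claimed bijection.
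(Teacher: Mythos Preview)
Your proposal is correct and follows essentially the same approach as the paper, which simply states that ``the proof of Lemma \ref{lem:parityPer} follows the same arguments'' as Lemma \ref{lem:parity}. You have actually supplied more detail than the paper does: your discussion of the wrap-around consistency check and the dimension count are the substantive points the paper leaves implicit, and your explicit inverse formula mirrors \eqref{eq:inverse} exactly as intended.

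One small remark on the dimension count: as written, \eqref{eq:parper} lists only $\ell_1$ column constraints and $\ell_2$ row constraints, whereas your count uses all $(\ell_1+1)+(\ell_2+1)$ (with one redundancy). You are right that the constraints for $i=\ell_1$ and $j=\ell_2$ also hold by the same telescoping, and these are needed for the cardinality to match; the paper's index range is slightly loose here, but your reasoning is sound.
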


\begin{proof}
  The proof relies on the following observation; for $\s \in \{-1,1\}^{\L'}$, for some $\L'$ finite, and $A,B \subset \L'$ we have
  \begin{align}
\label{eq:prod}
    \prod_{x\in A}\s_x\prod_{y\in B}\s _y=\prod_{x\in A\triangle  B}\s_x\,,
  \end{align}
where $A\triangle B$ denotes the symmetric difference of $A$ and $B$.
It follows that for $\s \in \O_\L^\t$ and $i \in [0:\ell_1]$
\begin{align*}
  \prod_{x\in C_i}p_x^\t(\s)= \prod_{x\in C_i}\prod_{y\in B_x}\s_y = \t_{(i,0)}\t_{(i+1,0)}\t_{(i,\ell_2+1)}\t_{(i+1,\ell_2+1)}\,,
\end{align*}
and similarly for rows, i.e. $p^\t(\O^\t_\L) \subset \{p \in  \Omega_{\cB(\L)}\,:\, p \textrm{ satisfies \eqref{eq:par1} and \eqref{eq:par2}}\}$.

To prove the opposite inclusion, fix $\z \in  \{p \in  \Omega_{\cB(\L)}\,:\, p \textrm{ satisfies \eqref{eq:par1} and \eqref{eq:par2}}\}$, then $\s^\t(\z) \in \O_\L^\t$ by the definition given in \eqref{eq:inverse}.
Furthermore, using (\ref{EqDefectMap}) and \eqref{eq:prod},
\begin{align*}
   p^\t(\s^\t(\z))_x = \prod_{y\in B_x}\s^\t(\z)_y = \prod_{y\ll x}\z_y\prod_{z\ll x+e_1}\z_z\prod_{y'\ll x+e_2}\z_{y'}\prod_{z'\ll x+e_1+e_2}\z_{z'} = \z_x
\end{align*}
This completes the proof of Lemma \ref{lem:parity}, since $|\O_\L^\t| = | \{p \in  \Omega_{\cB(\L)}\,:\, p \textrm{ satisfies \eqref{eq:par1} and \eqref{eq:par2}}\}| = 2^{|\L|}$.
The proof of Lemma \ref{lem:parityPer} follows the same arguments.
\end{proof}

These parity considerations imply the following bounds on the number of configurations with a given number of defects:
\begin{lemma} \label{LemmaExtensionWoo}
Fix $k \in \mathbb{N}$. If $\L=[1:L]^2$, then with all plus boundary conditions
  \begin{align}
    |\{\s \in \O_\L^+\,:\, |p^+(\s)| = 2k\}| \leq \min\{(e k)^{2k} L^{2k},L^{3k}\} \,.
  \end{align}
If $\L=[0:L]^2$, then with periodic boundary conditions
   \begin{align}
    |\{\s \in \O_\L\,:\, |p^\mathrm{per}(\s)| = 2k\}| \leq  2^{2L+1}\min\{(e k)^{2k} L^{2k},L^{3k}\} \,.
  \end{align}
\end{lemma}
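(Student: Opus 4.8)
The plan is to count defect configurations directly, exploiting the parity constraints of Lemma \ref{lem:parity} (resp. Lemma \ref{lem:parityPer}) together with the bijectivity of $p^\t$. Since $p^+$ is a bijection from $\O_\L^+$ onto the set of $p \in \O_{\cB(\L)}$ satisfying \eqref{eq:par1}--\eqref{eq:par2}, it suffices to bound the number of defect patterns $p$ with exactly $2k$ defects (the number must be even by the parity constraints with plus b.c., since all the right-hand sides in \eqref{eq:par1}--\eqref{eq:par2} equal $1$). For the crude bound $L^{3k}$, I would argue as follows: a defect pattern with $2k$ defects occupies at most $2k$ rows and at most $2k$ columns of $\cB(\L)$; more usefully, think of placing the defects one row at a time. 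In each row $R_j$ that contains defects, the parity constraint \eqref{eq:par2} forces the number of defects in that row to be even, so once all but one defect in a row is placed the last is determined — but this is a constraint per row, not obviously giving a clean power of $L$, so instead I would use the simpler observation: the total number of ways to choose the positions of $2k$ defects among the $\le (L+1)^2 \asymp L^2$ sites of $\cB(\L)$ is at most $\binom{(L+1)^2}{2k} \le \frac{(L+1)^{4k}}{(2k)!} \le (L+1)^{4k}$, which is too weak. To get $L^{3k}$ I need to use parity: pick the set of occupied columns (at most $2k$ of them, but actually the bound should come from choosing positions more cleverly). The cleanest route is: a configuration with $2k$ defects can have its defects grouped so that roughly half the "coordinate freedom" is removed by parity. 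Concretely, I would choose the $x_1$-coordinates of the $2k$ defects freely (at most $(L+1)^{2k}$ ways), then within each column the number of defects is even, so it suffices to specify, for each column, all but one of the $x_2$-coordinates of its defects; summing the "all but one" over columns leaves at most $k$ free $x_2$-coordinates (one saved per nonempty column-pair structure — more precisely at most $2k - (\text{number of nonempty columns}) \le 2k - 1$ but on average $k$), giving at most $(L+1)^{k}$ choices, hence $\lesssim L^{3k}$. For the bound $(ek)^{2k} L^{2k}$, I would choose the set of $\le 2k$ rows that are nonempty (at most $\binom{L+1}{2k} \le \frac{(L+1)^{2k}}{(2k)!}$ ways — wait, this gives $L^{2k}$), then in each nonempty row place an even number of defects; the number of ways to distribute $2k$ defects into the chosen rows and then within each row choose an even-size subset is bounded by a Catalan-type / multinomial count of order $(ek)^{2k}$, using $\binom{2k}{j} \le 2^{2k}$ and $k^{2k}$-type bounds.

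The periodic case follows immediately once the plus case is done: by Lemma \ref{lem:parityPer}, fixing $\t \in \{-1,1\}^{C_0 \cup R_0}$ makes $p^{\mathrm{per}}$ a bijection onto the patterns satisfying \eqref{eq:parper}, and $|C_0 \cup R_0| = (\ell_1 + 1) + (\ell_2+1) - 1 = 2L + 1$ when $\L = [0:L]^2$, so there are $2^{2L+1}$ such choices of $\t$; each contributes at most the number of valid defect patterns with $2k$ defects, and the parity constraints \eqref{eq:parper} are at least as restrictive as a subset of the plus-b.c. constraints, so the same counting bound $\min\{(ek)^{2k}L^{2k}, L^{3k}\}$ applies, yielding the claimed $2^{2L+1} \min\{\cdots\}$.

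The main obstacle I anticipate is getting the exact exponents and constants to line up, particularly squeezing out the saving of one power of $L$ per constrained row/column to land on $L^{3k}$ rather than $L^{4k}$, and separately packaging the row-distribution count into the clean form $(ek)^{2k}$ (this is where inequalities like $\binom{n}{j} \le (en/j)^j$ and the bound on the number of ways to write $2k$ as an ordered sum of positive even integers enter). Neither step is deep, but both require care that the parity constraints are used exactly once per row (or per column) and not double-counted, and that "nonempty rows" versus "rows that could be nonempty" is handled consistently. I would present the plus-b.c. argument in full detail and remark that the periodic case is identical modulo the factor $2^{2L+1}$ coming from the choice of boundary spins on $C_0 \cup R_0$.
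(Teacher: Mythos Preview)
Your periodic-boundary reduction is correct, but both counts in the plus case have genuine gaps.

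For the $L^{3k}$ bound, your argument breaks down at ``within each column the number of defects is even, so it suffices to specify, for each column, all but one of the $x_2$-coordinates of its defects.'' The column parity constraint $\prod_{x\in C_i} p_x = 1$ only forces the \emph{count} in each column to be even; it does not determine the position of the last defect once the others are placed. So specifying $n_i - 1$ of the $x_2$-coordinates in a column does not determine the $n_i$th, and your scheme really costs $(L+1)^{2k}$ for the $x_1$-coordinates times another $(L+1)^{2k}$ for the $x_2$-coordinates, i.e.\ $L^{4k}$. The ``on average $k$'' remark is not a bound. For the $(ek)^{2k}L^{2k}$ bound, your sketch chooses $\le 2k$ nonempty rows and then appeals to a vague ``Catalan-type'' count; but after choosing rows you still need column positions, and you have not controlled those.

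The missing observation, which the paper uses, is that each nonempty row contains an \emph{even} number of defects, hence at least two, so there are at most $k$ nonempty rows (and similarly at most $k$ nonempty columns). Once you have this, both bounds are immediate. For $(ek)^{2k}L^{2k}$: choose $k$ rows and $k$ columns with replacement ($L^{2k}$ ways), then place the $2k$ defects on the resulting $k\times k$ sublattice ($\binom{k^2}{2k} \le (ek^2/2k)^{2k} \le (ek)^{2k}$ ways). For $L^{3k}$: use only row parity to pair up the defects within each row into $k$ pairs, and observe that each pair is specified by a row and two column positions, at most $L^3$ choices, giving $L^{3k}$.
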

\begin{proof}
We begin with the $+$ boundary conditions. By Lemma \ref{lem:parity} we have, for each $k \in \N$, $$|\{\s \in \O_\L^+\,:\, |p^+(\s)| = 2k\}| = |\{p \in p^+(\O_\L^+) \,:\, |p| = 2k\}|$$
and  $|\{p \in p^+(\O_\L^+) \,:\, |p| = 2k\}|$ is exactly the number of configurations in $\{0,1\}^{\L}$, where we identify defects with $1$'s, which have even row and column sums and the total number of defects is $2k$.
Since any row which contains at least one defect must also contain at least two, the total number of rows which have at least one defect is clearly bounded above by $k$ (similarly for columns).
It follows that the number of configurations which satisfy the row and column parity constraint is at most the number of ways of firstly choosing $k$ rows and $k$ columns (with replacement), and then arranging the $2k$ defects anywhere on the sub-lattice defined by these rows and columns.
The number of ways of arranging the former is $L^{2k}$, and the number of ways of arranging the latter is clearly bounded above by $\binom{k^2}{2k}$. Using $\binom{n}{k} < (e\, n)^{k}/k^k$ and taking the product gives the first upper bound.
The second upper bound follows by using only the row parity, which implies that the defects may be grouped into $k$ disjoint pairs, such that defects belonging to the same pair occupy the same row.
There are then at most $L^3$ ways to arrange each pair on $\L$ and the upper bound $L^{3k}$ follows immediately.

We next consider periodic boundary conditions. By the above argument, for every choice of $\t \in \{-1,1\}^{C_0 \cup R_0}$ we have
\be
|\{\s \in \O_\L\,:\, |p^\mathrm{per}(\s)| = 2k, \, \res{C_{0} \cup R_{0}}{\s} = \t \}| \leq \min\{(e k)^{2k} L^{2k},L^{3k}\}.
\ee
Applying the bijection in Lemma \ref{lem:parityPer} and noting that $|\{-1,1\}^{C_0 \cup R_0}| = 2^{2L+1}$ completes the proof.
\end{proof}

For a given boundary condition $\tau$, define the collection of \textit{ground states} by:
\begin{align}
  \cG = \cG^{\t} = \{ \sigma \in \O_\L^{\t} \, : \, H_\L^{\t}(\sigma) = \min_{\eta \in \O_\L^\t} H_\L^{\t}(\eta) \}.
\end{align}

\begin{lemma}[Domination of Ground States]
\label{lem:dom}
  Let $\t \in \{+,\rm{per}\}$ and $L=L_c$, then
  \begin{align}
    \pi(\cG) \geq 1 - O(e^{-2\b})\,.
  \end{align}
\end{lemma}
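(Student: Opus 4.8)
The plan is to estimate $\pi(\cG^c)$ directly using the explicit description of the defect measure. By \eqref{eq:defectprob}, the push-forward of $\pi_\L^\t$ onto the plaquette variables is proportional to $e^{-\beta|p|}$ on the set of admissible defect configurations, and by \eqref{eq:simple} we have $\pi_\L^\t(\s) = \pi_\L^\t(+) e^{-\beta|p^\t(\s)|}$ for $\t \in \{+,\mathrm{per}\}$ (using that $+$ is a ground state with zero defects in both cases). So the first step is to identify the ground states: for $+$ boundary conditions the unique ground state is $+$ itself (zero defects, by the parity constraints \eqref{eq:par1}--\eqref{eq:par2} with $\t\equiv+1$, the minimum number of defects is $0$), while for periodic boundary conditions the ground states are exactly the $2^{2L-1}$ configurations with zero defects (by Lemma \ref{lem:parityPer}, the zero-defect configurations are indexed by the $2^{2L+1}$ choices of $\s$ on $C_0\cup R_0$, but the global $\mathbb{Z}_2$ symmetry and one further parity relation collapse this to $2^{2L-1}$; in any case the key point is that $\cG$ is precisely $\{\s : |p^\t(\s)|=0\}$).

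The second step is the union bound. Write
\[
\pi(\cG^c) = \frac{\sum_{\s : |p^\t(\s)| \geq 1} e^{-\beta|p^\t(\s)|}}{\sum_{\s} e^{-\beta|p^\t(\s)|}} \leq \frac{\sum_{k \geq 1} N_{2k}\, e^{-2\beta k}}{N_0},
\]
where $N_{2k} = |\{\s : |p^\t(\s)| = 2k\}|$ (only even defect counts occur, by parity), and $N_0 = |\cG|$ equals $1$ in the plus case and $2^{2L-1}$ in the periodic case. Now feed in Lemma \ref{LemmaExtensionWoo}: in the plus case $N_{2k} \leq (ek)^{2k} L^{2k}$, so the sum is bounded by $\sum_{k\geq 1} (ek)^{2k} L^{2k} e^{-2\beta k}$; with $L = L_c \leq e^{\beta/2}$ this is $\sum_{k\geq 1} (ek)^{2k} e^{-\beta k} \leq \sum_{k \geq 1} (e^2 k^2 e^{-\beta})^k$, which for $\beta$ large is dominated by its $k=1$ term $e^2 e^{-\beta} = O(e^{-\beta})$ — in fact this already gives the stronger bound $O(e^{-\beta})$, but we should double-check the constant in the exponent. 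Hold on: $e^{-2\beta k} L^{2k} = e^{-2\beta k} e^{\beta k} = e^{-\beta k}$, and the claimed bound is $O(e^{-2\beta})$, so a naive use of the $(ek)^{2k}L^{2k}$ bound only yields $O(e^{-\beta})$. To get $O(e^{-2\beta})$ one must be more careful about the $k=1$ term: a single pair of defects in $\L=[1:L]^2$ with plus b.c. must consist of two defects in the same row \emph{and} the same column (to satisfy both parity constraints), which is impossible — the minimal nonzero defect count is $4$, not $2$. So in fact $N_2 = 0$, the sum starts at $k=2$, giving $e^{-2\beta \cdot 2} L^4 = e^{-2\beta}$, hence $\pi(\cG^c) = O(e^{-2\beta})$ as stated.

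For the periodic case the same computation applies with the extra factor: $N_{2k} \leq 2^{2L+1} (ek)^{2k} L^{2k}$ and $N_0 = 2^{2L-1}$, so the ratio $N_{2k}/N_0 \leq 8 (ek)^{2k} L^{2k}$, and the $2^{2L}$ factors cancel. Again one checks that with periodic b.c. the parity constraints \eqref{eq:parper} force a single pair of defects to lie in a common row and common column, which is again impossible, so the sum starts at $k=2$ and yields $O(e^{-2\beta})$. The main obstacle is not the analysis — which is an elementary geometric series — but rather getting the sharp constant $2$ in the exponent, which requires the observation that the cheapest excited defect configurations have four defects (at the corners of a nondegenerate rectangle), not two; this is exactly the parity fact already highlighted in the heuristics section and in the proof of Lemma \ref{LemmaExtensionWoo}. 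One should also take a moment to confirm that the tail $\sum_{k\geq 2}(e^2k^2 e^{-\beta})^k$ is indeed $O(e^{-2\beta})$ uniformly, which is immediate since for $\beta$ large the ratio of consecutive terms is $o(1)$, so the series is at most twice its leading term $e^4 \cdot 2^4 \cdot e^{-2\beta} \cdot (\text{const})$.
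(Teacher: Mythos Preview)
Your approach is essentially the paper's: use \eqref{eq:simple} to write $\pi(\cG^c)$ as a ratio involving $N_{2k}e^{-2\beta k}$, invoke Lemma~\ref{LemmaExtensionWoo}, and (crucially) observe that the parity constraints force $N_2=0$ so the sum starts at $k=2$. The paper's proof records exactly this, noting up front that no configuration can have exactly two defects and then summing from $k\geq 2$.

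There is one genuine gap in your tail estimate. You claim $\sum_{k\geq 2}(e^2k^2e^{-\beta})^k=O(e^{-2\beta})$ because ``the ratio of consecutive terms is $o(1)$.'' This is false for large $k$: the term $(e^2k^2e^{-\beta})^k$ exceeds $1$ once $k>e^{\beta/2-1}$, and the sum runs up to $k\sim L^2/2\approx e^\beta/2$, so taken literally your series is astronomically large. The fix is to use the \emph{second} bound in Lemma~\ref{LemmaExtensionWoo}, namely $N_{2k}\leq L^{3k}$: for (say) $k\geq 4$ this gives $N_{2k}e^{-2\beta k}\leq e^{-\beta k/2}$, and $\sum_{k\geq 4}e^{-\beta k/2}=O(e^{-2\beta})$. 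Combine this with the $(ek)^{2k}L^{2k}$ bound for $k=2,3$ (which yields $O(e^{-2\beta})$ and $O(e^{-3\beta})$ respectively) and the result follows. The paper uses this splitting explicitly later (Section~\ref{sec:lwrbnd}, Case~1), so it is available machinery; you just need to actually invoke both halves of Lemma~\ref{LemmaExtensionWoo}.

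A minor side remark: your aside about $2^{2L-1}$ versus $2^{2L+1}$ ground states reflects differing lattice conventions in the paper (box $[1{:}L]^2$ versus $[0{:}L]^2$), not a $\mathbb{Z}_2$ quotient. As you correctly note, it is irrelevant to the argument since the factor cancels in $N_{2k}/N_0$.
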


\begin{proof}
By Lemmas \ref{lem:parity} and \ref{lem:parityPer}, we note that all possible configurations have an even number of defects, and  also that no configurations can have exactly 2 defects. When $\t = +$, there is a unique ground state $+ \equiv (+1,\ldots,+1)$.
In this situation, Lemma \ref{LemmaExtensionWoo}  gives
\begin{align*}
 1-\pi(+)&= \sum_{k \geq 2}\pi(+) | \{ \sigma \in \O_\L^+\, : \, |p^+(\s)| = 2k \} | e^{-2k \, \beta}  \lesssim \pi(+) e^{-2\beta}\,. 
\end{align*}
Thus $\pi(+) = 1 - O(e^{-2 \beta})$, completing the proof of the lemma for $\t = +$.
For the case of periodic boundary conditions, observe that there are exactly $2^{2L+1}$ ground states, so $\pi(\cG) = 2^{2L+1}\pi(+)$ and the same argument holds.
\end{proof}

\section{Construction and Analysis of Canonical Paths} \label{SecConCanPath}

Our arguments for the upper bounds in Theorems \ref{ThmMainResPlus} and \ref{ThmMainResPer} will be based on the method of ``canonical paths.'' The idea in this method is to construct a family of (possibly random) paths between any pairs of configurations $\sigma, \eta$, such that the paths do not use any one edge too much.  In this section, we construct the canonical paths that will be used for those proofs, and also give some initial analysis of their properties. As a guide to the remainder of this section:

\begin{itemize}
\item In Section \ref{SecCanonicalPathAbs}, we state generic bounds on relaxation and mixing times of a Markov chain in terms of canonical paths. These are small variants of well-known results, but may be useful in the study of other Markov chains.
\item In Section \ref{SubsecPrelPathNot}, we give preliminary notation related to canonical paths.
\item In Section \ref{SubsecPathLowDef}, we construct and analyze the main ``building block" of our canonical path construction. This building block determines the entire path for any initial configuration $\sigma$ with $O(\beta L)$ defects.
\item In Section \ref{SubsecPathHighDef}, we construct a ``building block" that may be used when $\sigma$ has many more than $L$ defects. We also combine our two building blocks to construct the main canonical path used throughout this paper.
\item In Section \ref{SecPathAnalysisLemmas}, we give simple bounds on various quantities related to combining the building blocks into a longer canonical path.

\item In Section \ref{SecFinalBoundCanonical}, we combine the results in this section to obtain final bounds on the properties of the canonical paths we have defined.
\end{itemize}

\subsection{Canonical Path Bounds} \label{SecCanonicalPathAbs}
In this section we describe the use of canonical path arguments to bound the spectral gap and spectral profile of a Markov process on finite state space.
We first set some standard notation for this subsection only. For a general Markov chain with transition rate matrix $K$ on state space $\Omega$, denote by $\edges = \{(\eta,\sigma) \in \Omega^{2} \, : \, K(\eta,\sigma) > 0\}$ the collection of transitions allowed by $K$. We recall the notion of \textit{paths} in such a Markov chain:

\begin{defn}[Path]

A sequence $\gamma = (\h^{(0)},\h^{(1)},\ldots,\h^{(m)}) \in \Omega$ is called a \textit{path from $\h^{(0)}$ to $\h^{(m)}$}  if  $(\h^{(i-1)},\h^{(i)}) \in \edges$ for all $1 \leq i \leq m$. We say that this path has length $|\gamma| \equiv m$. For $1 \leq i \leq m$, we call the pair $(\h^{(i-1)},\h^{(i)})$ the \textit{$i$'th edge of $\gamma$}. For $\h,\s \in \Omega$, we denote by $\Gamma_{\h,\s}$ the collection of all paths from $\h$ to $\s$. Similarly we let $\Gamma_{\h} = \cup_{\s} \Gamma_{\h,\s}$ be the collection of all paths starting at $\h$ and $\Gamma = \cup_{\h,\s \in \config} \Gamma_{\h,\s}$ be the collection of all paths.

For a path $\gamma = (\h^{(0)},\ldots,\h^{(m)})$, we denote by $\init{\gamma} = \h^{(0)}$ and $\fin{\gamma} = \h^{(m)}$ the \textit{initial} and \textit{final} elements of $\gamma$. If $\gamma_{1}$, $\gamma_{2}$ are two paths with $\fin{\gamma_{1}} = \init{\gamma_{2}}$, we denote by $\gamma_{1} \conc \gamma_{2}$ the concatenation of $\gamma_{1}$ and $\gamma_{2}$ with the repeated element $\gamma_{1}^{(fin)}, \gamma_{2}^{(init)}$ removed. With some abuse of notation, we define $\emptyset \conc \gamma = \gamma \conc \emptyset = \gamma$ for any path $\gamma$.
\end{defn}

When applying the results in this Section, we will always use the process given by \eqref{eq:gen}.

For the remainder of this section, assume that $K$ has a unique reversible measure $\mu$ and let $\mu_{\ast} = \min_{\h\in \Omega} \mu(\h)$. For $S \subset \Omega$, define $c_{0}(S)$ to be the non-constant functions on $\Omega$ with support contained in $S$. For $S \subset \Omega$, also define
\be \label{EqDefSpecBoundLambdaDef}
\lambda(S) = \inf_{f \in c_{0}(S)} \frac{\mathcal{D}_{K}(f)}{\var_{\mu}(f)},
\ee
where $\mathcal{D}_{K}$ is the Dirichlet form associated with $K$,
 \be \label{eq:DirForm}
\mathcal{D}_{K} = -\mu(f\,Kf)= \frac{1}{2}\sum_{\h,\s \in \Omega}\mu(\h)K(\h,\s)(f(\h)-f(\s))^2\,.
\ee

\begin{lemma} [Multicommodity Flows for Bounding the Spectral Profile] \label{LemmaMultiSpec}

Let $S \subsetneq \Omega$. For $\h \in S$, let $F_{\h}$ be a probability measure on paths $\gamma = (\gamma^{0},\ldots, \gamma^{m})$ in $\Omega$ that have starting point $\gamma^{0} = \h$ and final point $\gamma^{m} \in S^{c}$. Then

\be
\lambda(S) \geq \mathcal{A}^{-1},
\ee
where
\be \label{IneqDefCanPathObject}
\mathcal{A} \equiv 2\, \max_{e \in \edges} \sum_{\h \in \Omega} \sum_{\gamma \ni e} F_{\h}(\gamma) \, | \gamma | \, \frac{\mu(\h)}{\mu(e_{-}) K(e_{-},e_{+})}
\ee
is the cost of the flow $\{F_{\h}\}_{\h \in S}$.
\end{lemma}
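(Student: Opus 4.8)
The plan is to run the standard Sinclair-type multicommodity-flow argument, but in the restricted form that controls $\lambda(S)$ rather than the global spectral gap of $K$. The one structural feature special to the profile setting is this: every $f \in c_{0}(S)$ vanishes identically on $S^{c}$, and every path $\gamma$ in the support of some $F_{\eta}$ terminates at a point $\fin{\gamma} \in S^{c}$, so telescoping $f$ along $\gamma$ and using $f(\fin{\gamma}) = 0$ yields the one-sided identity $f(\eta) = \sum_{e \in \gamma}\big(f(e_{-}) - f(e_{+})\big)$, with no surviving boundary term. This is precisely what upgrades the classical gap bound to a bound on $\lambda(S)$.

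First I would fix a non-constant $f \in c_{0}(S)$ and note that $\var_{\mu}(f) \leq \E_{\mu}[f^{2}]$ (one cannot recenter $f$ without destroying its support), so it suffices to prove $\mathcal{D}_{K}(f) \geq \mathcal{A}^{-1}\,\E_{\mu}[f^{2}]$. Writing $\delta_{e}f := f(e_{-}) - f(e_{+})$, the identity above says $f(\eta) = \sum_{e \in \gamma}\delta_{e}f$ for every $\gamma$ in the support of $F_{\eta}$, hence $f(\eta)^{2} = \sum_{\gamma} F_{\eta}(\gamma)\big(\sum_{e \in \gamma}\delta_{e}f\big)^{2}$; the Cauchy--Schwarz inequality across the $|\gamma|$ edges of each path then gives
\be
f(\eta)^{2} \;\leq\; \sum_{\gamma} F_{\eta}(\gamma)\,|\gamma|\sum_{e \in \gamma}(\delta_{e}f)^{2}.
\ee
Next I would multiply by $\mu(\eta)$, sum over $\eta \in S$, insert the trivial factor $\mu(e_{-})K(e_{-},e_{+})\,/\,\mu(e_{-})K(e_{-},e_{+})$ into the edge sum, and interchange the order of summation so that the outer sum runs over $e \in \edges$. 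Pulling out the maximum over $e$ of the congestion term $\tfrac{1}{\mu(e_{-})K(e_{-},e_{+})}\sum_{\eta}\sum_{\gamma \ni e}F_{\eta}(\gamma)\,|\gamma|\,\mu(\eta)$, which equals $\mathcal{A}/2$ by definition of $\mathcal{A}$, produces
\be
\E_{\mu}[f^{2}] \;\leq\; \frac{\mathcal{A}}{2}\sum_{e \in \edges}\mu(e_{-})K(e_{-},e_{+})(\delta_{e}f)^{2} \;=\; \mathcal{A}\,\mathcal{D}_{K}(f),
\ee
where the last equality just unpacks the expression \eqref{eq:DirForm} for the Dirichlet form. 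Dividing by $\var_{\mu}(f)$ and taking the infimum over $f \in c_{0}(S)$ yields $\lambda(S) \geq \mathcal{A}^{-1}$.

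I do not expect a genuine obstacle here; the argument is bookkeeping once the framework is set up. The two points that require care are: (i) tracking the factor of $2$ between $\mathcal{A}$ and the Dirichlet form, which is exactly why $\mathcal{A}$ carries the leading $2$ and why the final cancellation goes through; and (ii) invoking the hypotheses $f \in c_{0}(S)$ and $\fin{\gamma} \in S^{c}$ so that $f(\fin{\gamma}) = 0$ — this is the single place where the ``profile'' (as opposed to gap) nature of the statement enters. One should also remark at the outset that, since each $F_{\eta}$ is by assumption a probability measure on paths from $\eta$ into $S^{c}$, every $\eta \in S$ admits at least one such path, so the telescoping identity is never vacuous.
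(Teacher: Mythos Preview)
Your proof is correct and follows essentially the same approach as the paper: telescope $f$ along each path using $f(\fin{\gamma})=0$, apply Cauchy--Schwarz to pick up the factor $|\gamma|$, insert $\mu(e_-)K(e_-,e_+)$ top and bottom, swap the order of summation, and pull out the maximal congestion. Your handling of the variance is in fact slightly cleaner than the paper's: the paper writes ``Fix $f\in c_0(S)$ with $\E_\mu[f]=0$'' and then uses $\var_\mu(f)=\sum_\eta \mu(\eta)f(\eta)^2$, but one cannot recenter $f$ by a constant without leaving $c_0(S)$; your use of the trivial inequality $\var_\mu(f)\le \E_\mu[f^2]$ sidesteps this and works for every $f\in c_0(S)$.
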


\begin{remark}
We will use this with $S = \config \backslash \{+\}$ when bounding the spectral gap of the SPM with + boundary conditions. The more general bound is useful for bounding the spectral profile, which  we will need to obtain a more refined mixing time bound.
\end{remark}

\begin{proof}

Fix $f \in c_{0}(S)$ with $\E_{\mu}[f] = 0$. By the Cauchy-Schwarz inequality and the fact that $f(y) = 0$ for all $y \in S^{c}$,
\be
\var_{\mu}(f) &=  \sum_{\h \in \Omega} \mu(\h) f(\h)^{2} = \sum_{\h \in \Omega}\mu(\h) \sum_{\gamma} F_{\h}(\gamma) ( \sum_{i=1}^{|\gamma| } (f(\gamma^{i-1}) - f(\gamma^{i})))^{2} \\
&\leq \sum_{\h \in \Omega} \sum_{\gamma} \mu(\h)F_{\h}(\gamma) \, | \gamma | \, \sum_{i=1}^{|\gamma|} (f(\gamma^{i-1}) - f(\gamma^{i}))^{2} \\
&= \sum_{\h \in \Omega} \sum_{\gamma}\mu(\h) F_{\h}(\gamma) \, | \gamma | \, \sum_{i=1}^{|\gamma|} \frac{\mu(\gamma^{i-1}) K(\gamma^{i-1}, \gamma^{i})}{\mu(\gamma^{i-1}) K(\gamma^{i-1}, \gamma^{i})} (f(\gamma^{i-1}) - f(\gamma^{i}))^{2}.
\ee
Comparing this to the formula for the Dirichlet form, this gives
\be
\frac{\var_{\mu}(f)}{\mathcal{D}_{K}(f)} \leq 2\, \max_{e \in \edges} \sum_{\h \in \Omega} \sum_{\gamma \ni e} F_{\h}(\gamma) \, | \gamma | \, \frac{\mu(\h)}{\mu(e_{-}) K(e_{-},e_{+})},
\ee
completing the proof of the lemma.

\end{proof}

This bound is all that we will require to bound the spectral gap of the SPM model with + boundary conditions. In order to bound the mixing time of both chains, as well as the spectral gap of the SPM with periodic boundary conditions, we will use the spectral profile method introduced in \cite{Goel2006}. We quickly recall the main results of that paper:

\begin{defn} [Spectral Profile Bounds.]

For $r > 0$, define
\be
\Lambda(r) = \inf_{\mu_{\ast} \leq \mu(S) \leq r} \lambda(S).
\ee
Theorem 1.1 of \cite{Goel2006} states that the mixing time $\tmix$ of $K$ satisfies
\be \label{IneqMainSpectralProfileBound}
\tmix \leq \int_{4 \, \mu_{\ast}}^{16} \frac{2}{x \, \Lambda(x)} dx.
\ee
\end{defn}

 In our application, the spectral profile $\Lambda(r)$ is defined as an infimum over a very large collection of subsets $S \subset \O_\L^\t$; we will find it easier to work with a much-reduced collection of subsets. For fixed $r>0$, let
\be \label{DefHighDensityRound}
k(r) = \min \{ k \, : \, \pi(+)  \, e^{-\beta k} \leq r \}.
\ee
For fixed $k \in \mathbb{N}$, let
\be \label{DefHighDensitySet}
S_{k} = \{ \sigma \in \O_\L^\t \, : \, |p^\t(\sigma)| \geq k \}.
\ee
We observe that, for fixed $r>0$ and $S \subset \config$ with $\pi(S) \leq r$, we have
\be \label{IneqSetContainmentSpectralProfile}
S \subset S_{k(r)}.
\ee
Thus, we have
\be \label{IneqSpecProfileSubsets}
\Lambda(r) &= \inf_{\mu_* < \mu(S) < r} \inf_{f \in c_{0}(S)} \frac{\mathcal{D}_{\cL}(f)}{\var_{\pi}(f)} \\
&\geq \inf_{f \in c_{0}(S_{k(r)})} \frac{\mathcal{D}_{\cL}(f)}{\var_{\pi}(f)}=\lambda(S_{k(r)}).
\ee
Thus, to bound the mixing time, it is enough to bound $\inf_{f \in c_{0}(S)} \frac{\mathcal{D}_{\cL}(f)}{\var_{\pi}(f)}$ below, for sets of the form $S = S_{k}$ and some $k \in \mathbb{N}$.

\subsection{Preliminary Notation} \label{SubsecPrelPathNot}

We will use the following notation for defects frequently:

\begin{defn} [Neighbouring Defects] \label{DefNeighbourDefect}

Let $\Lambda = [\ell_{1}:\ell_{2}] \times [\ell_{3}:\ell_{4}]$. Fix $p \in \pconfig$ and define
\be
D(p) = \{x \in \pconfig \,: \, p_{x} = -1 \}.
\ee

For $p \in \pconfig$ and $x = (x_{1},x_{2}) \in D(p)$, define the \textit{left row neighbour}, \textit{right row neighbour}, \textit{down column neighbour} and \textit{up column neighbour} of the defect at $x$ by
\be  \label{EqNearestNeighbours}
\ell(x) &= (\max\{x_1' < x_1\,:\, (x_1',x_2) \in D(p)\},x_{2}) \\
r(x) &= (\min\{x_1' > x_1\,:\, (x_{1}',x_{2}) \in D(p)\}, x_{2}) \\
d(x) &= (x_{1},\max\{x_2'<x_2\,:\, (x_{1},x_{2}') \in D(p)\}) \\
u(x) &= (x_{1},\min\{x_{2}'> x_2\,:\,(x_1,x_2') \in D(p)\})
\ee
when they exist; otherwise by $\emptyset$ as appropriate.
Finally, by a small abuse of notation we define
$
D(\sigma) = D(p^{+}(\sigma))
$
for $\sigma \in \O_\L^+$.
\end{defn}

\begin{figure}[tb]
\includegraphics[width=0.6\linewidth]{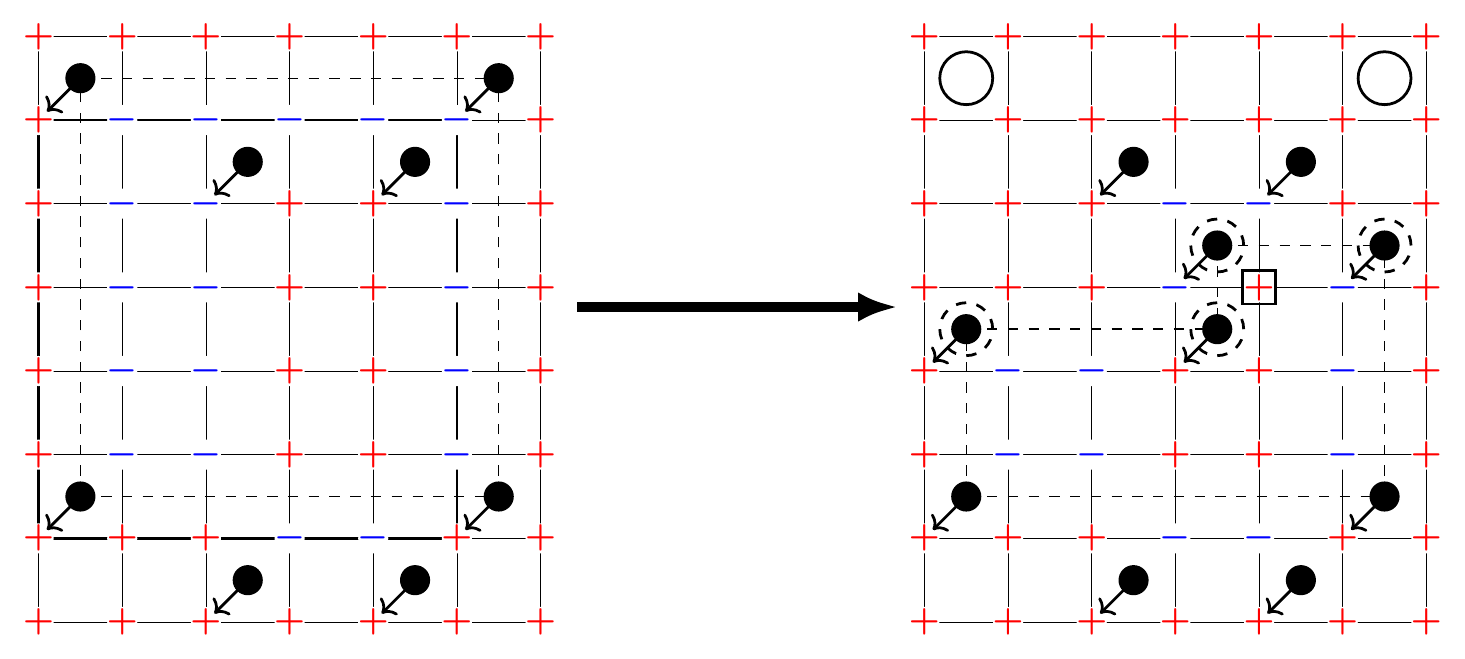}
\caption{\label{FigRectRemoveDiff} Left: an initial configuration $\sigma$, with defects $D(\sigma)$ indicated by black circles. The thick-black 
	 box shows the rectangle $R$, the sites inside the dashed box correspond to $\cB_-(R)$ which are flipped. Right: An example of a configuration $\eta \in \gamma_{\sigma,R}$, the next spin to be flipped in the path is marked by a solid box. Open circles denote \textit{absent} defects (with respect to $\sigma$), dashed circles denote defect variables which are also flipped with respect to $\sigma$. In this example the top row of the rectangle $R$, bottom row, and the two rows containing defect variables which differ from $D(\sigma)$, are all distinct; it is also possible for the rows to intersect (see Equation \eqref{eq:typedef} for an explicit list of these  cases - the example above shows a \bf{mid}-type edge).}
\end{figure}

We now define a simple ``base path" that flips all the spins in a rectangle $R \subset \Z^{2}$ in a sensible order. Typically in application, the rectangle $R$ in the following definition will have at least three defects at its vertices in the initial configuration $\sigma$, and this ``base path'' removes two or four of them without ever adding more than two in intermediate stages:

\begin{defn} [Rectangle-Removal Path] \label{DefRectRemovePath}
Let $\Lambda = [\ell_{1}\!:\!\ell_{2}]\times[\ell_{3}\!:\!\ell_{4}]$. Given $\sigma \in \config$ and a rectangle $R = [x_{1}\!:\!x_{2}] \times [y_{1}\!:\!y_{2}] \subset \cB(\Lambda)$ we define a path $\gamma_{\sigma,R}$, starting at $\sigma$, that flips all the spins at all the sites in $\cB_{-}(R)$. We construct the path according to the following cases:
\begin{enumerate}
\item If $(x_{1},y_{1}), (x_{1},y_{2}), (x_{2},y_{2}) \in D(\sigma)$, \textit{or} fewer than 3 of the four corners of the rectangle are in $D(\sigma)$, we define $\gamma_{\sigma,R}$ to be the path that flips all spins in $\cB_{-}(R)$ in lexicographic order.
\item $(x_{2},y_{2}), (x_{1},y_{2}), (x_{2},y_{1}) \in D(\sigma)$  but $(x_{1},y_{1}) \notin D(\sigma)$, we define $\gamma_{\sigma,R}$ to be the path that that flips all spins in $\cB_{-}(R)$ in anti-lexicographic order.
\item Otherwise, denote by $M \, : \, \config \mapsto \config$ the ``mirror reflection" map
\be \label{EqMirrorMap}
\rm{M}(\sigma)_{i,j} = \sigma_{i,\ell_{4} + \ell_{3} - j}\,
\ee
which flips the lattice in the $x$-axis.
Note that $M(R)$ is now in one of the two previous cases, and we define $\gamma_{\sigma,R} = M^{-1}(\gamma_{M(\sigma), M(R)})$.
\end{enumerate}
\end{defn}

Although a \textit{spin configuration} $\eta \in \gamma_{\sigma,R}$ can be very different from $\sigma$, their associated \textit{defect configurations} $D(\eta), D(\sigma)$ can only differ in a few locations, as illustrated in Figure \ref{FigRectRemoveDiff}.



Denote by $\rect_{\L}$ the collection of rectangles in the lattice $\L$. For $R = [x_{1}: x_{2}] \times [y_{1} : y_{2}] \in \rect_{\L}$, we informally call $y_{2}-y_1$ the ``height'' and $x_{2} - x_{1}$ the ``width'' of $R$. Denote by $\edges = \{(\s,\eta)\in\config^2 \, : \,  \cL_\L^+(\s,\eta) > 0\}$ the collection of edges associated with the generator $\cL_\L^+$. Finally, For any set $S$, denote by $\mathcal{P}(S)$ the power set of $S$.

\subsection{Path Construction: Initial Segment} \label{SubsecPathLowDef}

In this section, we give a description of the main building block of our canonical path. The basic heuristic for these paths was presented in Section \ref{SecHeuristic}: pick a random rectangle with at least three defects in its corners, then ``squish" the rectangle using the path in Definition \ref{DefRectRemovePath}.  For reasons discussed shortly, picking a rectangle uniformly at random leads to very poor bounds. In this section we show that the heuristic can be saved with a small tweak: rather than choosing uniformly at random, we make a very careful choice of ``good" rectangles at each stage. 

Throughout, we will denote by $F$ a function which maps a configuration to a collection of ``good" rectangles, and $\flowtot{\sigma}$ a measure on paths associated with this function $F$. In order to bound the congestion in Equation \eqref{IneqDefCanPathObject} (i.e. the total weight of paths using a given edge), we define a recovery function $G$:
\be
G(e) = \{\s \in \config\,:\, \exists\, \gamma \ni e, \ \ s.t. \ \  \flowtot{\sigma}(\gamma) > 0 \}\,,
\ee
(see Equation \eqref{eq:G} at the start of Definition  \ref{def:Gs} for an equivalent definition).
The details of the functions $F$ and $G$ are a bit complicated, and so for the reader's convenience we do not make explicit reference to them outside of this section. 
The only results required from this section are 
the notation for the path measure $\flowtot{\sigma}$ (see Definition \ref{DefPartialPath}), and our final estimates in Proposition \ref{LemmaTypicalReconstruction}.

We now give a rough description of our path construction, followed by the full details. 
The main problem with the heuristic of Section \ref{SecHeuristic} is  illustrated in Figure \ref{FigBadConfigNest}. In this figure, we see an edge $e= (e_{-}, e_{-}^{x})$ in a path $\gamma_{\sigma,R}$ for which the row containing the spin $x$ that is being flipped by $e$ has very many defects. In this case, the edge could appear in an enormous number of paths of the form in Definition \ref{DefRectRemovePath}. As a result, the congestion of our path is large and the canonical path method gives a very poor bound on the relaxation time.

\begin{figure}[h]
\includegraphics[width=0.95\linewidth]{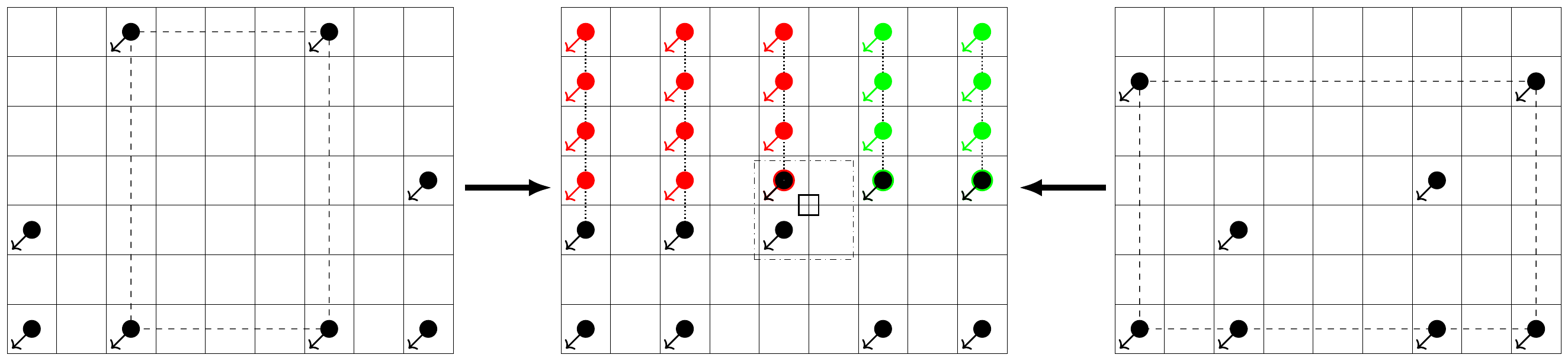}
\caption{\label{FigBadConfigNest}The middle display shows an edge $(e_{-}, e_{+})$; the defects in $e_{-}$ are marked by black circles, and the spin that is flipped in $e_+$ is marked with a square (the corresponding defect variables that are flipped are contained in the dot-dashed box).
If this edge occurs along the path in Definition \ref{DefRectRemovePath}, the top-left and top-right defects in the initial configuration could have appeared anywhere marked by red and green defects respectively, such that they are both in the same row.
Two representative possible initial configurations which both go through this same edge are shown left and right. Only part of the lattice $\L$ is shown; the configurations do not satisfy the parity constraint, Equation \ref{eq:par2}, on this part of the lattice.}
\end{figure}

There is a conceptually-simple fix to the problem illustrated in Figure \ref{FigBadConfigNest}: if no row has ``too many" defects, select rectangles \textit{uniformly at random among all rectangles} with at least three defects at vertices; otherwise, choose \textit{uniformly at random among rectangles with top on the high-density row.} 
This approach almost works, and leads to the following rectangle-choosing function for configurations $\sigma$ with fewer than $L$ defects:

\begin{enumerate}
	\item If no row contains more than $\beta^{\Theta(1)}$  defects, choose any rectangle with defects at three or more corners.
	\item Otherwise, let $K$ be the maximal number of defects in any row and, for $1 \leq k \leq K$, let $\#(k)$ be the collection of rows with $k$ defects. 
	Our first fix to our initial heuristic tells us to choose any rectangle that has defects at three or more corners \textit{and} whose top edge is in $\#(K)$.
	However, if  we observe an edge with maximal number of defects in any row is $k'$, and many rows contain $k'{-}2$ defects, then rectangle chosen could have had it's top edge in any of the rows with occupation $\#(k')$ or $\#(k'-2)$.
	Since the row occupations in an edge configuration are slightly perturbed with respect to the initial configuration, in order to be able to accurately predict which row we started from we must choose a high density row in the initial configuration such that there are not too many more rows with similar occupations.
	We therefore choose $k$ to be the largest number  that satisfies $|\#(k)| \gtrsim \beta^{-1} |\#(j)|$ for each $j$ in some bounded interval near $k$. We then choose any rectangle that has defects at three or more corners \textit{and} whose top edge is in $\#(k)$.
	It turns out this allows us to make a sufficiently good estimate of which row the top of the rectangle was in, having observed the edge $e$.
\end{enumerate}

If we were only interested in bounding the relaxation time for $+$ boundary conditions, it would suffice to choose a rectangle uniformly at random from the subset constructed above. However, in order to obtain a bound on the mixing time, and on the relaxation time for the periodic boundary conditions, we need to get a much tighter bound for initial configurations that have very many defects. This tighter bound gives us better control on the constant appearing in \eqref{IneqDefCanPathObject} when the initial set $S$ appearing in that lemma is small.

It turns out that we need a tighter bound exactly when the initial configuration, $\s$, has many more than $L$ defects. To obtain a better bound in this case, we will ``split'' the lattice $\Lambda$ into roughly $\m(\sigma) \approx \frac{|D(\sigma)|}{L}$ pieces, each of which contains $O(L)$ defects. It will turn out that our main congestion bounds will apply to each ``piece'' of the split, and so this splitting analysis will give us a ``free'' improvement to our final bound on the order of $\m(\sigma)$. This improvement is critical for obtaining a good bound on the mixing time of the process, since it will imply that defects can disappear much more quickly when there are very many of them. We don't know of any simple way to obtain this  improvement by a factor of $\m(\sigma)$ by other small modifications of our main argument.

The picture presented by this construction is quite simple, but we require quite a large amount of notation to describe it precisely. The main difficulties are that (i) we must also be able to reconstruct the part $i \in [1:L]$ of the split by observing an edge in a sample path, and (ii) the strategy used to define $F(\sigma,i)$ depends on comparing certain rough statistics of $\sigma$ to certain threshold values, and it is not possible to use a single threshold value for all $\sigma \in \config$. 
We resolve this by partitioning the state space $\config$ into a small number of parts indexed by the choice of threshold.
This type of threshold argument, and associated partitioning of the configuration space, seems to be useful for similar path arguments.

We now give some notation which is necessary to construct the functions $F$ discussed above. We begin by introducing some notation for the rectangles that will be chosen by these functions. When three vertices $x,y,z$ in a lattice are corners of a rectangle, we denote this rectangle by $\mathbf{R}(x,y,z)$. We then introduce some special collections of rectangles:

\begin{defn}[Marked Rectangles] \label{DefTriples}
We follow the notation of Definition \ref{DefNeighbourDefect}, assuming again that $\Lambda$ is of the form $\Lambda = [\ell_{1}:\ell_{2}] \times [\ell_{3}:\ell_{4}]$. For $x,y \in \L$, define
$\mathbf{Top}(x,y)$ and $\mathbf{Bottom}(x,y)$ to be the maximum and minimum of $x$ and $y$ with respect to lexicographical order (respectively).

For a defect configuration $p \in \O_{\cB(\L)}$ and  $x =(x_{1},x_{2}),y=(y_{1},y_{2}) \in D(p)$, we define the associated \textit{extended} ``down-" and ``up-"  rectangles of $x$ and $y$ by setting
\begin{align*}
T_{d}(x,y) &= \mathbf{R}(x,y, \mathbf{Top}(d(x),d(y)))\quad \textrm{if $x_2 = y_2$ and at least one of $d(x), d(y)$ exist,} \\
T_{u}(x,y) &=\mathbf{R}(x,y, \mathbf{Bottom}(u(x),u(y)))\quad \textrm{if $x_2 = y_2$ and at least one of $u(x), u(y)$ exist,}
\end{align*}
otherwise, say $T_{d}(x,y) = \emptyset$ or $T_{u}(x,y) = \emptyset$ respectively.

We denote by
\be \label{EqAllallextdef}
\allallext{p} = \{ T_a(x,y) \, : \, x,y \in D(p), \textrm{ and } a \in \{u,d\}\}
\ee
the collection of \textit{extended rectangles} of $p$. For $i \in \mathbb{N}$, define
\be \label{EqAllextdef}
\allext{p}{i} = \{T_{a}(x,y) \, : \, x,y \in D(p), \, i= y_{2} = x_{2}, \, a \in \{u,d\} \}
\ee
to be the collection of extended rectangles with two defects in row $i$. When the lattice $\L$ is not clear from the context, we use the notation $\allallext{p,\L}$ and $\allext{p,\L}{i}$ for Equations \eqref{EqAllallextdef} and \eqref{EqAllextdef}.

Finally, by a small abuse of notation we define $\allallext{\s} = \allallext{p^+(\s)}$ and $\allext{\s}{i} = \allallext{p^+(\s)}$ for $\sigma \in \O_\L^+$.
\end{defn}

We will split the lattice $\L$  into roughly $\max(1,\frac{|D(\sigma)|}{L})$ pieces, each of which is a union of adjacent columns. 

\begin{defn} [Splits] \label{DefBalancedSplits}
	Let $\bar \L =  \cB([1:L]^2)$ and 
	\be
		\bar \Lambda(i,j) = \{i,i+1,\ldots,j\} \times [0:L], \quad \textrm{for } 0 \leq i < j \leq L\,.
	\ee
	For $\s \in \O_{ \L}$ set $s_{1}(\s) = 0$, then inductively define
	\be \label{EqBalSplitRec}
	s_{i+1}(\s) &=  \min \{ j > s_{i}(\s) \, : \, |D(\s) \cap \bar \Lambda(s_{i}(\s),j-1)| \geq 100 \, L  \}, \quad \textrm{if this is finite,}\\
	s_{i+1}(\s) &= L+1, \qquad \text{otherwise,}
	\ee
	for $1 \leq i \leq m(\s) \equiv \min \{j \, : \, s_{j+1}(\s) = L+1\}$.
	When $\s$ is clear from context we write $s_i$ for $s_i(\s)$.
	Finally, define $\S(\s) = \{s_{i}\}_{i=1}^{m+1}$ to be this splitting of $[0:L]$, and $\S_{i}(\s) =\Lambda(s_{i},s_{i+1}-1)$. 
\end{defn}

The following definition gives some notation for counting the number of defects in a row of a sublattice:
\begin{defn} [Occupancy] \label{DefOcc}
Fix a subset $\Lambda = [\ell_{1}\!:\!\ell_{2}]\times [1\!:\!L] \subset [1\!:\!L]^{2}$, and fix $p \in \O_{\cB(\L)}$. For $0 \leq i \leq L$, define the \textit{occupancy} of row $i$ by
\be \label{EqDefOccInit}
\occ{i}(p) = \left| D(p) \cap \left( [\ell_{1}-1:\ell_{2}]\times\{i\}\right) \right|,
\ee
the number of defects in row $i$. 
When the lattice is not clear from the context, we use the extended notation $\occ{i,\L}(p)$ for $\occ{i}(p)$. Again, by a slight abuse of notation, for $\s \in \config^+$ we let $\occ{i}(\s) = \occ{i}(p^+(\s))$.
\end{defn}

We give some basic facts about splits. Recall that we view $p$ as a defect configuration, even if it is not in the image of Equation \eqref{EqDefectMap} for any configuration $\s$. In particular $p$ may not satisfy the parity condition, for example the parity condition in parts of the a split are satisfied in each column but not necessarily in each row.
 We now show that any piece of a split must have many  extended rectangles, relative to the number of defects which are in the piece:

\begin{lemma}[Splits Contain Many Rectangles] \label{LemmaSplitsManyRect}
Let $\L = [1:L]^2$, for any $\s \in \O_\L^+$,
\be \label{IneqManyRect}
| \allallext{\s} | \geq \frac{1}{4}|D(\s)|\,,
\ee
and for $1\leq i \leq m-1$, letting $p = p^+(\s)|_{\cS_i(\s)}$, then
\be \label{IneqSplitManyRect}
| \allallext{\s,\cS_i(\s)} | \geq \frac{1}{4}(|D(p)|-L-1)\,.
\ee
Furthermore, for any $j$ with $\occ{j,\cS_i(\s)}(\s) \geq 1$,
\be \label{IneqSplitManyRect2}
| \allext{\s,\cS_i(\s)}{j} | \geq \frac{1}{2} (\occ{j,\cS_i(\s)}(\s) - 1)^{2}.
\ee
\end{lemma}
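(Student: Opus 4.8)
The plan is to prove all three inequalities by the same elementary counting idea: each "extended rectangle" $T_a(x,y)$ is determined by an ordered pair of defects $x,y$ in a common row together with a choice of $a\in\{u,d\}$, and for most defects $x$ there is at least one valid such $T_a$. First I would establish a per-row bound: if a row $j$ of the relevant sublattice contains $n\geq 2$ defects, then among any two of them $x,y$ with $x_2=y_2=j$, at least one of $T_d(x,y)$ or $T_u(x,y)$ is nonempty \emph{unless} neither $d(x),d(y),u(x),u(y)$ exists — but if row $j$ is not the only occupied row (which holds whenever there is any defect in another row), then every defect in row $j$ has either a down-neighbour or an up-neighbour. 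Counting ordered pairs $(x,y)$ of distinct defects in row $j$ gives $n(n-1)$ pairs; being slightly careful about which of $T_u,T_d$ survives, I can extract at least $\tfrac12 n(n-1)\geq\tfrac12(n-1)^2$ distinct extended rectangles with both defects in row $j$ — this is exactly \eqref{IneqSplitManyRect2} with $n=\occ{j,\cS_i(\s)}(\s)$. (One must check distinctness: $T_a(x,y)$ for different pairs $\{x,y\}$ in the same row are different rectangles since the pair $\{x,y\}$ is two of the four corners; and $T_u$ vs $T_d$ on the same pair differ unless the third corner coincides, which cannot happen if it lies strictly above vs strictly below row $j$.)

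Next, for \eqref{IneqManyRect} I would sum the per-row bound over all occupied rows. Using the parity constraint \eqref{eq:par2} from Lemma \ref{lem:parity}, every occupied row of $p^+(\s)$ contains an \emph{even} number $\geq 2$ of defects; writing $n_j=\occ{j}(\s)$ and noting that there are at least two occupied rows whenever $D(\s)\neq\emptyset$ (again by parity, since a single occupied row would force empty column sums elsewhere while that row is nonempty — more simply, the configuration has $|p|\geq 4$ and the four-defect configurations are rectangles occupying two rows), I get that each defect $x$ in row $j$ contributes at least one extended rectangle via a neighbour in its row, so $|\allallext{\s}|\geq\sum_j \tfrac12 n_j(n_j-1)\geq\sum_j\tfrac12 n_j\cdot 1=\tfrac12|D(\s)|$ when every $n_j\geq 2$; a cleaner route giving the stated constant $\tfrac14$ is to assign to each ordered pair in a common row one surviving extended rectangle and note each unordered pair is double-counted and each rectangle arises from at most $2$ such assignments, yielding $|\allallext{\s}|\geq\tfrac14\sum_j n_j(n_j-1)\geq\tfrac14\sum_j n_j=\tfrac14|D(\s)|$ since $n_j\geq 2$.

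For \eqref{IneqSplitManyRect}, the subtlety is that the restricted configuration $p=p^+(\s)|_{\cS_i(\s)}$ satisfies only the \emph{column} parity condition, not the row parity, so a row of the piece may contain exactly one defect, which contributes no in-row extended rectangle. The fix: by column parity each occupied column of the piece has an even number $\geq 2$ of defects, but that is the wrong direction. Instead I count rows of the piece: the number of rows containing exactly one defect is at most $L+1$ (there are only $L+1$ rows), so at most $L+1$ defects lie in "bad" (singleton) rows; every remaining defect lies in a row with $\geq 2$ defects of the piece and hence — provided that row also has a defect above or below \emph{within the piece}, which holds because $|D(p)|>L+1$ forces the piece to have defects in $\geq 2$ rows once we discard at most $L+1$ of them — contributes at least one extended rectangle of the piece. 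Applying the same assignment/double-counting as above to the $\geq |D(p)|-(L+1)$ "good" defects gives $|\allallext{\s,\cS_i(\s)}|\geq\tfrac14(|D(p)|-L-1)$.

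The main obstacle I anticipate is the bookkeeping around which of $T_u(x,y), T_d(x,y)$ is actually nonempty and making the distinctness/double-counting constant come out to exactly $\tfrac14$ (resp. $\tfrac12$) rather than something worse: one has to argue that an extended rectangle of $\allallext{\s}$ is produced by at most two ordered pairs (namely the two orderings of its pair of defects in a common row), and handle the degenerate possibility that $T_u(x,y)=T_d(x,y)$ — which is impossible since the third corner is strictly above the common row in one case and strictly below in the other. Once that combinatorial accounting is pinned down, together with the parity input from Lemma \ref{lem:parity} (for \eqref{IneqManyRect}) and the elementary "at most $L+1$ singleton rows" observation (for \eqref{IneqSplitManyRect}), the three bounds follow.
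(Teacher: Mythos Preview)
Your plan is correct in outline and your argument for \eqref{IneqSplitManyRect2} is essentially the paper's. For \eqref{IneqManyRect} and \eqref{IneqSplitManyRect}, however, the paper takes a slightly different and cleaner route than your row-by-row pair count: it simply observes that every defect with both a row-partner and a column-partner is a \emph{corner} of at least one extended rectangle, and since each rectangle has only four corners, the map from such defects to extended rectangles is at most $4$-to-$1$, giving the factor $\tfrac14$ immediately. This sidesteps the cross-row double-counting bookkeeping you flag (your intermediate claim $|\allallext{\s}|\geq\sum_j\tfrac12 n_j(n_j-1)$ is in fact unjustified, since a single rectangle can appear as $T_d$ from its top row and $T_u$ from its bottom row; your fallback $\tfrac14$-bound does handle this correctly).

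One genuine but easily fixed gap: in the split case \eqref{IneqSplitManyRect}, your justification that each ``good'' defect has an up- or down-neighbour in the piece (``$|D(p)|>L+1$ forces the piece to have defects in $\geq 2$ rows'') is not the right reason---having defects in two rows does not force any \emph{particular} column to contain two defects. The correct argument, which the paper uses, is that the piece $\cS_i(\s)$ is a union of full columns of $\cB(\L)$, so the \emph{column} parity constraint from Lemma~\ref{lem:parity} survives restriction; hence every defect in the piece has a column-partner in the piece, and at least one of $T_d(x,y),T_u(x,y)$ is nonempty. With that correction your proof goes through.
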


\begin{proof}
For $\s \in \O_\L^+$, for each defect in $p^+(\s)$ there is at least one defect in the same row and at least one in the same column by the parity constraint. 
It follows that each defect belongs to at least one rectangle in $\allallext{\s}$, and can be located at one of at most four vertices of any such rectangle. Inequality \eqref{IneqManyRect} follows directly.

Note that, for any fixed $j \in [0 : L]$, the set
\be
\{ x = (x_{1},x_{2}) \in D(p) \, : \, x_{2} = j, \, r(x) = \emptyset\textrm{ and } \ell(x) = \emptyset \}
\ee
of defects in row $j$, of part $\cS_i(\s)$,  with no left or right-partners cannot have more than one element, and hence the total number of defects without left or right-partners is bounded by $L+1$. 
Every other defect also has at least one up or down-partner by the parity constraint, and the same argument as above implies Inequality \eqref{IneqSplitManyRect}.
Essentially the same considerations also imply Inequality \eqref{IneqSplitManyRect2}: any pair of defects in the same row must have at least one associated  extended rectangle.
\end{proof}

We observe that each piece of a split has at least $100 L$ defects (except possibly the last piece), and splits associated with a high-defect configuration $\sigma$ will have $m(\s) \gtrsim \frac{|D(\sigma)|}{L}$ pieces:

\begin{lemma} [Counting Split Components] \label{LemmaCountingSplits}
Fix $\s \in \pconfig$, and let $\S(\s) = \{s_{i}\}_{i=1}^{m+1}$ be as in Definition \ref{DefBalancedSplits}. Then for $L \geq 100$,
\be
\left\lfloor \frac{|D(\s)|}{ 101\, L} \right\rfloor \leq m(\s) \leq \frac{|D(\s)|}{100\, L} + 1.
\ee

\end{lemma}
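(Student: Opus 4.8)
The plan is to read off both inequalities directly from the recursive definition \eqref{EqBalSplitRec} of the split points $s_1 < s_2 < \cdots < s_{m+1}$, using only the defining property of $s_{i+1}(\s)$ as a \emph{first hitting time} of the defect count. The key observation is that each ``complete'' piece $\cS_i(\s)$ with $1 \le i \le m-1$ contains a controlled number of defects: at least $100L$ by the definition of $s_{i+1}$ as the minimal $j$ with $|D(\s) \cap \bar\Lambda(s_i, j-1)| \ge 100L$, and at most $100L + (L+1)$ because, by minimality, the sublattice $\bar\Lambda(s_i, s_{i+1}-2)$ (one column narrower) has fewer than $100L$ defects, and adding back the single column $\{s_{i+1}-1\}\times[0:L]$ adds at most $L+1$ more. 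Since the pieces $\cS_1(\s),\ldots,\cS_m(\s)$ partition the columns $[0:L]$, their defect counts sum to exactly $|D(\s)|$.

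For the \textbf{lower bound} $\lfloor |D(\s)|/(101L)\rfloor \le m(\s)$: each of the $m$ pieces contains at most $100L + L + 1 \le 101L$ defects (using $L \ge 100$, so $L+1 \le 101L - 100L = L$ fails — rather one checks $100L + L + 1 \le 101 L$ iff $1 \le 0$, which is false, so I should instead bound by $101L$ using $100L + L + 1 \le 101L$ only after noting the last piece may be smaller; more carefully, $100L + (L+1) \le 102L$ for $L\ge 1$, but the cleaner route is: a complete piece has $< 100L + (L+1) \le 101L$ defects precisely when $L+1 \le L$, which is false, so the honest bound is each complete piece has $\le 101L$ defects once we observe the strict inequality $|D(\s)\cap\bar\Lambda(s_i,s_{i+1}-2)| < 100L$ gives $\le 100L - 1$, hence the piece has $\le 100L - 1 + L + 1 = 101L$). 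Thus $|D(\s)| \le 101 L \cdot m(\s)$ (the last, possibly incomplete, piece only has fewer defects and is still counted), giving $m(\s) \ge |D(\s)|/(101L)$, and since $m(\s) \in \mathbb{N}$ we may take the floor.

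For the \textbf{upper bound} $m(\s) \le |D(\s)|/(100L) + 1$: each of the $m-1$ complete pieces $\cS_1(\s),\ldots,\cS_{m-1}(\s)$ contains at least $100L$ defects, and these are disjoint, so $(m-1)\cdot 100L \le |D(\s)|$, i.e. $m(\s) \le |D(\s)|/(100L) + 1$. (Here the last piece $\cS_m(\s)$, which may have fewer than $100L$ defects, is simply discarded from the count.) I would write these two short paragraphs out, being careful with the off-by-one in ``one column narrower,'' and that is the whole proof.

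The only mildly delicate point — and the place I'd be most careful — is the bookkeeping on the upper bound for the defect count of a complete piece, namely justifying that dropping from $\bar\Lambda(s_i, s_{i+1}-1)$ to $\bar\Lambda(s_i, s_{i+1}-2)$ removes a single column of at most $L+1$ cells and that minimality of $s_{i+1}$ forces the narrower sublattice to carry $< 100L$ defects; one must also handle the boundary case where $s_{i+1} - 1 = s_i$ (i.e. a single column already has $\ge 100L$ defects), which forces $L+1 \ge 100L$, impossible for $L \ge 100$, so in fact $s_{i+1} > s_i + 1$ always and the subtraction is legitimate. Everything else is elementary counting; there is no real obstacle, just the need to get the constants and the floor/ceiling exactly right so the stated inequalities (with constants $100$ and $101$) come out cleanly.
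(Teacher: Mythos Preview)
Your proposal is correct and follows exactly the same approach as the paper: the paper's proof simply asserts ``by inspection'' that $100L \le |D(\s) \cap \S_i(\s)| \le 101L$ for $1 \le i \le m-1$ with the upper bound also holding for $i=m$, from which the result is immediate. Your write-up is a (somewhat circuitous) detailed verification of precisely these two bounds, arriving at the key computation $|D(\s) \cap \S_i(\s)| \le (100L-1) + (L+1) = 101L$ via the strict minimality of $s_{i+1}$; you could streamline by stating this cleanly once rather than first stumbling over $100L + L + 1$.
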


\begin{proof}
By inspection,
\be
100 \, L \leq |D(\s) \cap \S_i(\s) | \leq 101 \, L
\ee
for all $1 \leq i \leq m-1$, where the upper bound also holds for $i=m$.
The result follows immediately.
\end{proof}

To reduce notation in the rest of this section we define the occupancy vector of the splits. 
\begin{defn} [Occupancy vector] \label{def:OccVec}
Let $\L = [1:L]^2$, we define the row occupation vector function, $r:\config \times [1:L] \to [0:L]^{L+1}$, by
\[
r_k(\s,i) = \occ{k,\cS_i(\s)}(\s)\,, \quad \textrm{for } \ k \in [0:L]\,, 
\]
so that $r_k(\s,i)$ is the number of defects in row $k$ of part $i$ of the split.

\noindent For any vector $v \in [0:L]^{L+1}$ we write  $\num{v}{k} = |\{i \in [0:L]\,:\,v_i = k\}|$ and, with a slight abuse of notation, $\vmax = \max_{k \in [0:L]}v_k$, for short.
\end{defn}

We now partition the set of possible occupation vectors in order to define our choice of good rectangles.
 The parameter $\theta$ will allow us to approximate which row we started from, having observed an edge, without loosing more than a factor of $O(\beta)$.

\begin{defn}[Good Partition]
  \label{def:goodpart}
	Let $\Theta=[0:\b]$, define $\cV_0 = \{v \in [0:L]^L\,:\, \vmax \leq \b^2\}$ and 
	\begin{align*} 
	\bar\cV_{\theta} &= \{v \in [0:L]^L\,:\, \vmax > \b^2\,,\ \forall k \in [-32,32]\ \ \b\num{v}{\vmax -\theta} \geq \num{v}{V_{\rm max} -\theta -k} \}\,, \quad \textrm{for } \theta \in \Theta\setminus\{0\} \,.
	\end{align*}
	For $\theta \geq 1$, define recursively $\cV_\theta = \bar \cV_\theta \setminus \cup_{i=0}^{\theta -1}\cV_i$.
\end{defn}

\begin{lemma}
	For all $\beta$ sufficiently large, the sets $\cV_0,\cV_1,\ldots,\cV_\b$, partition $[0:L]^\L$.  
\end{lemma}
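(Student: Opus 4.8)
The plan is to show two things: that every vector $v \in [0:L]^{L}$ lies in some $\cV_\theta$ (covering), and that the $\cV_\theta$ are pairwise disjoint. Disjointness is essentially built into the definition: for $\theta \geq 1$ we set $\cV_\theta = \bar\cV_\theta \setminus \bigcup_{i=0}^{\theta-1}\cV_i$, so by construction $\cV_\theta \cap \cV_i = \emptyset$ for all $i < \theta$, and hence the family $\{\cV_\theta\}_{\theta \in \Theta}$ is pairwise disjoint with no further work. So the real content is covering.

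For covering, first dispose of the easy case: if $\vmax \leq \b^2$ then $v \in \cV_0$ by definition, and we are done. So assume $\vmax > \b^2$, and we must show $v \in \bar\cV_\theta$ for some $\theta \in \Theta \setminus \{0\} = [1:\b]$ — equivalently, since $\cV_\theta$ only removes vectors already placed in earlier $\cV_i$, it suffices to find \emph{some} $\theta \in [1:\b]$ with $v \in \bar\cV_\theta$, and then $v$ lands in $\cV_{\theta'}$ for the smallest such index $\theta'$ (or an even earlier one). Membership in $\bar\cV_\theta$ requires the inequality $\b \num{v}{\vmax - \theta} \geq \num{v}{\vmax - \theta - k}$ for all $k \in [-32,32]$. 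The natural approach is a pigeonhole / iteration argument on the quantities $a_j := \num{v}{\vmax - j}$ for $j = 1, 2, \ldots$: I would argue by contradiction. Suppose no $\theta \in [1:\b]$ works; then for each such $\theta$ there is some $k_\theta \in [-32,32]$ with $\b \, a_\theta < a_{\theta + k_\theta}$, i.e. some nearby index has occupation count more than $\b$ times larger. Chaining these together — starting from $\theta$ with $a_\theta \geq 1$ (such a $\theta$ exists since $\vmax > \b^2 \geq 1$ forces at least one coordinate to equal $\vmax - \theta$ for the appropriate shift, e.g. $\theta$ chosen so $\vmax - \theta$ is actually attained) — produces a sequence of indices along which $a$ grows by a factor of $\b$ at each step. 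After about $\log_\b L$ such steps (which is $o(\b)$, hence comfortably fewer than the $\b/65$ or so steps available given the window size $65$), we would force some $a_j > L+1$, contradicting $\sum_j a_j = L+1$ (there are only $L+1$ rows, hence $L+1$ coordinates in $v$; note the paper writes $[0:L]^L$ but really means length-$(L+1)$ vectors, or one simply uses that $\num{v}{m} \leq L+1$ for every value $m$).

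The main obstacle I anticipate is bookkeeping the ``chaining'' carefully: each failure moves the index by at most $32$ in either direction, so one must check that one can take enough consecutive valid steps within the range $[1:\b]$ without the index escaping that range or revisiting values in a way that breaks the growth estimate. The clean way is to always move to the index $j^{*}$ maximizing $a_{j}$ over the window $[\theta - 32, \theta + 32]$; since a failure at $\theta$ means $a_{j^*} > \b a_\theta$, and since we can keep doing this as long as we stay inside $[1:\b]$, the multiplicative growth over $O(\log_\b L)$ steps — each step costing at most $32$ in index — forces $a_{j^*}$ past $L+1$ well before the index leaves $[1:\b]$ (here is where ``$\b$ sufficiently large'' is used: $\log_\b L = \log_\b \lfloor e^{\b/2}\rfloor \approx \tfrac{\b}{2 \log \b} \ll \b/32$). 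That contradiction shows some $\theta$ must succeed, completing the covering argument and hence the lemma.
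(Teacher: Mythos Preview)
Your proposal is correct and follows essentially the same approach as the paper: disjointness is by construction, and covering for $\vmax>\beta^2$ is handled by a contradiction/chaining argument in which repeated failures of the $\bar\cV_\theta$ condition force multiplicative growth of $\num{v}{\cdot}$ beyond $L$. The paper's proof is even terser---it simply asserts that failure for every $\theta\in[1{:}\beta]$ yields some $n$ with $\num{v}{n}>\beta^{\beta/32}>L$---so your write-up in fact supplies more detail; the only small wrinkle to tidy is the justification for the starting index (note $a_j=0$ for $j<0$, so the chain cannot fall below $0$, and if no $\theta\in[1{:}65]$ has $a_\theta\ge1$ then $v\in\bar\cV_{33}$ already, giving the contradiction directly).
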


\begin{proof}
	The sets $\cV_0,\cV_1,\ldots,\cV_\b$ are mutually disjoint by construction. 
	Fix $v \in [0:L]^\L$, if $\vmax \leq \b^2$ then $v \in \cV_0$. 
	On the other hand, if $\vmax > \b^2$ we claim that $\s \in \cup_{i=1}^{\b}\cV_{i}$. We prove this by contradiction: If not, there exists $n \in [0:\vmax]$ such that $\num{v}{n} > \b^{\b/32}$, which is greater than $L$ for all sufficiently large $\beta$.
\end{proof}

The configurations with row occupancy vectors in $\cV_0$ are called \emph{sparse} and will be treated differently from the other configurations.

\begin{defn} [Reconstructable Rectangles] \label{def:ReconRecs}
Fix $\L = [1:L]^2$, for $\sigma \in \config$ let $\S(\s) = \{s_{i}\}_{i=1}^{m+1}$ be the split associated with $\s$. 
For $i \notin [1:m(\sigma)]$, set $F(\sigma,i) = \emptyset$. For $ i \in [1:m(\sigma)]$, let $v = r(\s,i) \in [0:L]^L$, and 
define the set of good rectangles by  
\begin{align*}
	F(\s,i) = \begin{cases}
		\allallext{\s,\cS_i(\s)} & \textrm{if } v \in \cV_0\,,\\
		\bigcup_{j\,:\, v_j = \vmax -\underline{\theta}(v)}\allext{\s,\cS_i(\s)}{j} & \textrm{otherwise,} 
	\end{cases}
\end{align*}
where we define $\underline{\theta}(v)$ to be the unique value of $\theta$ such that $v \in \cV_\theta$.

\end{defn}
In the definition above, if the max row occupation in split $i$ is small then just take all rectangles in split $i$ with defects in at least three vertices. Otherwise, select \emph{only} rectangles with two defects in a row with occupation $\vmax - \underline{\theta}(v)$ and at least one other vertex containing a defect.

We note that if $|D(\s)|$ is sufficiently larger, so that the associated split has more than one part, then the last part of the split (the $m(\s)^{\textrm{th}}$) may contain no good rectangles (i.e. $F(\s,m(\s))$ may be empty). For this reason we avoid choosing this part of the split in the next definition. 

We will say that $\s \in \O_\L$ is \emph{sparse} in $\cS_i(\s)$ if $v \in \cV_0$ and \emph{$\theta$-dense} if $v \in \cV_\theta$ for $\theta >0$.
We now define the short paths which are used to construct the initial part of the full random paths. In words, we pick uniformly a part $i$ of the split from $1$ to $n(\s):=1\vee(m(\s) -1)$, and then independently and uniformly choose a good rectangle to flip from split $i$.  

\begin{defn}[Partial Random Path] \label{DefPartialPath}

For $\sigma \in \Omega_{[1:L]^{2}}$, define the ``partial path" probability measure $\flowtot{\sigma}$ by
\begin{align*}
  \flowtot{\sigma}(\gamma) = \frac{1}{\m(\sigma) \, |F(\sigma,i)|}\,, 
\end{align*}
if $\gamma = \gamma_{\sigma,R}$ for some $R \in F(\sigma,i)$ with $i \in [1:n(\s)]$, and 0 otherwise.\footnote{Note that $F(\sigma,i) \cap F(\sigma,j) = \emptyset$ for $i \neq j$ since $F(\s,i)$ only contains rectangles for which all vertices are inside the $i^\textrm{th}$ part of the split, so this is a probability measure.}
 Recall, here $n(\s):=1\vee(m(\s) -1)$.
\end{defn}

It is necessary to identify the index of the split initially chosen given an edge.
\begin{defn}[Identifying the split index] 
	\label{def:edgesplitfn}
For $e = (e_{-},e_{+}) \in \edges$ let $x(e) = (x_{1},x_{2}) \in \L$ be the single site that is flipped by $e$, so that $e_{+} = e_{-}^{x(e)}$, then define $\Id \, : \, \edges  \mapsto [1:L]$ by
\begin{align}
  \label{eq:Id}
  \Id(e) = i \quad \textrm{s.t.}  \quad  (x_1-1,x_2) \in \S_i(e_-) \,,
\end{align}
which is unique since $\{\cS_j(e_-)\}_{j=1}^{m}$ partitions $\cB(\L)$.
\end{defn}
The function $\Id$ always correctly identifies the index of the initial split chosen, made precise in the following sense.
\begin{lemma} \label{lem:SplitRecov}
	For each edge $e \in \edges$ and path $\gamma \ni e$ such that $\flowtot{\s}(\g)>0$ for some $\s \in \O_\L$, we have $\gamma = \gamma_{\s,R}$ for some $R \in F(\s,\Id(e))$.
\end{lemma}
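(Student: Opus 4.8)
The plan is to show that given an edge $e$ and a path $\gamma \ni e$ with $\flowtot{\sigma}(\gamma) > 0$, the split index chosen to produce $\gamma$ is forced to equal $\Id(e)$. By Definition \ref{DefPartialPath}, $\flowtot{\sigma}(\gamma) > 0$ means $\gamma = \gamma_{\sigma,R}$ for some $R \in F(\sigma,i)$ with $i \in [1:n(\sigma)]$; the content of the lemma is that necessarily $i = \Id(e)$. First I would recall that, by the footnote to Definition \ref{DefPartialPath} (and Definition \ref{def:ReconRecs}), every rectangle in $F(\sigma,i)$ has all of its vertices — and in fact $\cB_-(R) \subset \cS_i(\sigma)$, since $\cS_i(\sigma)$ is a union of adjacent columns and $R$ is built from defects all lying in columns indexed by $\cS_i(\sigma)$. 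Hence every spin flipped along $\gamma_{\sigma,R}$ (these are exactly the sites of $\cB_-(R)$, by Definition \ref{DefRectRemovePath}) lies in $\cS_i(\sigma)$.

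The key step is then to track the split through the path. The edge $e = (e_-,e_+)$ flips the single site $x(e)$, and by the previous paragraph $x(e) \in \cB_-(R) \subset \cS_i(\sigma)$; I must relate $\cS_i(\sigma)$ to $\cS_{\Id(e)}(e_-)$. The crucial observation is that flipping any spin in $\cB_-(R)$ changes only the four defect variables in $\overline{B}$ of that spin, all of which lie in columns inside $\cS_i(\sigma)$; more importantly, flipping spins along $\gamma_{\sigma,R}$ never changes the column sums of the defect configuration restricted to any column, so $|D(\cdot) \cap \bar\Lambda(a,b)|$ is unchanged for column intervals $[a,b]$ that are unions of full columns. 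Actually, a cleaner route: each spin flip in $\cB_-(R)$ flips four defects forming a $2\times 2$ block, so within any fixed column the number of defects changes by an even amount, but across the whole path the recursion defining the split (Definition \ref{DefBalancedSplits}) depends only on cumulative defect counts over column-prefixes $\bar\Lambda(s_i,j-1)$. I would argue that the split boundaries $\{s_j\}$ are invariant along $\gamma_{\sigma,R}$: since $\cB_-(R)$ is contained in columns strictly inside one split piece $\cS_i(\sigma)$, flipping a $2\times 2$ block of defects within that piece does not change the defect count in any prefix $\bar\Lambda(0,j)$ for $j$ outside the interior of that piece (the two flipped columns are both inside), and one checks it changes the count by an even number $\in \{-4,-2,0,2,4\}$ for prefixes cutting through the block — but the threshold $100L$ in the recursion and the structure of the path (which adds at most two defects at intermediate stages, per the remarks after Definition \ref{DefRectRemovePath}) keep the split boundaries pinned. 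Thus $\cS_j(\sigma) = \cS_j(e_-)$ for all $j$, and in particular $\cS_i(\sigma) = \cS_i(e_-)$.

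Given that, the conclusion is immediate: $(x_1 - 1, x_2) \in \cB_-(R) \subset \cS_i(\sigma) = \cS_i(e_-)$ (note $\cB_-(R)$ sits in columns one to the right of $\cB(R)$'s left edge, but still inside the same split piece), so by the defining property \eqref{eq:Id} of $\Id$ we get $\Id(e) = i$. Hence $R \in F(\sigma, i) = F(\sigma, \Id(e))$, which is exactly the claim.

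I expect the main obstacle to be the second step — verifying carefully that the split boundaries $\{s_j(\sigma)\}$ are not perturbed by the defect changes that occur along $\gamma_{\sigma,R}$, i.e. that $\cS_i(\sigma) = \cS_i(e_-)$. This requires a careful bookkeeping of how flipping spins in $\cB_-(R)$ alters the prefix defect counts $|D(\cdot) \cap \bar\Lambda(s_i, j-1)|$ relative to the threshold $100L$, using that $R$'s relevant sites lie strictly within a single split piece and that the path never introduces more than two extra defects at any stage. Everything else — the containment $\cB_-(R) \subset \cS_i(\sigma)$ and the final appeal to \eqref{eq:Id} — is essentially bookkeeping from the definitions.
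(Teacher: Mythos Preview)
Your overall plan is right, and you correctly isolate the crux: showing $(x_1-1,x_2)\in\cS_i(e_-)$ where $i$ is the index with $R\in F(\sigma,i)$. However, your central claim that the split boundaries are \emph{invariant} along $\gamma_{\sigma,R}$, i.e.\ $\cS_j(\sigma)=\cS_j(e_-)$ for all $j$, is false. The traveling defect pair sits in column $x_1-1$ of $e_-$ and can add two defects to that column relative to $\sigma$. If, say, $|D(\sigma)\cap\bar\Lambda(s_i,x_1-1)|=100L-2$ (nothing in the definitions rules this out; column parity only forces these prefix counts to be even), then in $e_-$ this count becomes $100L$, forcing $s_{i+1}(e_-)\leq x_1<s_{i+1}(\sigma)$. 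Your appeal to ``at most two extra defects'' and the size of the threshold $100L$ does not create a safety margin: a change of $2$ is exactly enough to cross the threshold. (As a minor aside, $(x_1-1,x_2)$ need not lie in $\cB_-(R)$ when $x_1=a+1$; it does always lie in $R\subset\cS_i(\sigma)$, which is what you actually need.)

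The paper's argument sidesteps this by proving something weaker but sufficient. First, since all defect changes occur in columns $\ell,\,x_1-1,\,r$ (left edge of $R$, pair column, right edge of $R$), all of which lie in $\cS_i(\sigma)$, the boundaries $s_j(e_-)=s_j(\sigma)$ hold for $j\leq i$ trivially. Second, rather than claiming $s_{i+1}$ is unchanged, the paper observes that in columns $\ell$ and $r$ the defect count can only \emph{decrease} from $\sigma$ to $e_-$, and the only column that can gain defects is $x_1-1$. Hence the prefix count up to column $x_1-2$ satisfies
\[
|D(e_-)\cap\bar\Lambda(s_i,x_1-2)|\;\leq\;|D(\sigma)\cap\bar\Lambda(s_i,x_1-2)|\;<\;100L,
\]
which forces $s_{i+1}(e_-)-1\geq x_1-1$. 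That inequality, together with $s_i(e_-)=s_i(\sigma)\leq a\leq x_1-1$, gives $(x_1-1,x_2)\in\cS_i(e_-)$ and hence $\Id(e)=i$. The missing idea in your proposal is precisely this monotonicity of the column-$\ell$ and column-$r$ defect counts, which lets you work with the prefix ending at $x_1-2$ and avoid the problematic column $x_1-1$ altogether.
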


\begin{proof}
	Fix  $e \in \edges$, let $x = (x_{1},x_{2}) \in \L$ be the single site that is flipped by $e$, and fix $\s$ and a path $\gamma$ such that  $e \in \gamma$ and $\flowtot{\s}(\g) > 0$. By Definition \ref{DefPartialPath} there exists an $i \in [1,n(\s)]$ and an $R \in F(\s,i)$ such that $\g = \g_{\s,R}$.
	By Definition \ref{LemmaTypicalReconstruction} we have $R \subset \cS_i(\s)$.
	Without loss of generality suppose that the top two corners of R are in $D(\s)$ (otherwise we flip the configurations according to the mirror map $M$).
	Label the columns of $\L$ containing the left edge of $R$, the site $x$ and the right edge of $R$ by $\ell,x_1,r$ respectively.
	By construction of $\gamma_{\s,R}$ the defect configurations $p(\s)$ and $p(e_-)$ are equal outside of the columns $\ell,x_1-1,r$ (See Fig. \ref{FigRectRemoveDiff}).
	It follows that $\cS_j(\s) = \cS_j(e_-)$ for $j < i$.
	Furthermore, the number of defects in columns $\ell$ and $r$ in $p(e_-)$ are less than or equal to the number of defects in the same columns of $p(\s)$, see Definition \ref{DefRectRemovePath} and Fig. \ref{FigRectRemoveDiff}.
	In particular $|D(e_-) \cap \bar \Lambda(s_{i}(e_-),x_1-2)| \leq |D(\s) \cap \bar \Lambda(s_{i}(\s),x_1-2)|$, and hence $s_{i+1}(e_-) -1 \geq x_1 -1$ which implies $(x_1-1,x_2) \in \cS_{i}(e_-)$ as required.
\end{proof}

For bounding the congestion of out paths it will be convenient to introduce the initial configurations which are compatible with a given edge.
\begin{defn}[Compatible Initial Configurations]\label{def:Gs}
	For each edge $e \in \edges$ we define the set of all initial configurations $\s \in \config$ such that $\flowtot{\sigma}$ gives positive weight to a path that use the edge $e$
\begin{align}\label{eq:G}
	G(e) = \{\s \in \config\,:\, \exists\, R \in \bigcup_{i=1}^{n(\s)} F(\s,i)\ \ s.t. \ \  e \in \g_{\s,R}\}\,.
\end{align}

We partition $G(e)$ in terms of the edge type and parameter $\theta$.
 Consider $\sigma\in\O_\L$ and a rectangle $R$ with at least three of its four corners in $D(\sigma)$. Assume for now that both of the top two corners of $R$ are in $D(\sigma)$. In this case, set
\begin{equation}
\label{eq:typedef}
\type(e,R,\sigma)=
\begin{cases}
\mathbf{none}, & \quad  e \notin \gamma_{\sigma,R}\,,\\
\mathbf{init}, & \quad  e \in \gamma_{\sigma,R} \text{ and } x \text{ is in the top row of } \cB_{-}(R) \text{ and } \\
& \qquad \qquad \qquad \qquad  \cB_{-}(R) \text{ has more than 1 row} \,,\\
\mathbf{fin}, & \quad e \in \gamma_{\sigma,R} \text{ and } x \text{ is in the last row of } \cB_{-}(R) \, , \\
\mathbf{mid}, & \quad  \text{otherwise.}
\end{cases}
\end{equation}
When the top two corners of $R$ are \textit{not} both in $D(\sigma)$, define the type by flipping all elements according to the mirror reflection map $M$ given in Definition \ref{DefRectRemovePath}. Then 
\be
G_{\theta}^{(\mathrm{U})}(e) &=  \{ \eta \in G(e) \,: \, \exists  R \in F(\eta, \Id(e)) \text{ s.t. } \type(e,R,\eta) = U  \textrm{ and } \underline{\theta}\big(v(\h,\Id(e))\big) = \theta \}\,.
\ee
\end{defn}

We now summarize the required congestion and energy bounds that will be used in the final analysis for the complete paths. 
\begin{prop}\label{prop:PTmain}\label{LemmaTypicalReconstruction}
Let $\L = [1:L]^2$, then:

\begin{enumerate}
\item \textbf{Covering Property:} For all $e \in \edges$
\be 
\label{IneqLemmaTypReconPath}
G(e) = \bigcup\limits_{ \theta \in \Theta}\big(  G_{\theta}^{(\mathrm{init})}(e) \cup G_{\theta}^{(\mathrm{mid})}(e) \cup G_{\theta}^{(\mathrm{fin})}(e)\big) \,.
\ee

\item \textbf{Small Congestion:} For all $e = (e_{-},e_{+}) \in \edges$
\be \label{IneqMainLemmaApprRecBd}
\sum_{\sigma \in G_{\theta}^{(\mathrm{U})}(e)}\sum_{ \g \ni e } \flowtot{\s}(\g) &\lesssim  \frac{\b^4\, L}{|D(e_{-})|}, \quad U \in \{\mathrm{init},\mathrm{mid}\}, \\
\sum_{\sigma \in G_{\theta}^{(\mathrm{fin})}(e)}\sum_{ \g \ni e} \flowtot{\s}(\g)&\lesssim \frac{\b^2\, L^{2}}{|D(e_{-})|}.
\ee


\item \textbf{Many Options:} We have
\be \label{IneqMainCombManyOptions}
F(\sigma,i) &\neq \emptyset\,, \  \textrm{ for all }\  \sigma \in \config\,,\ i \in [1: \m(\sigma)]\ \textrm{ and } \\
\m(\sigma) &\gtrsim \frac{|D(\sigma)|}{L}\,, \  \textrm{ for all }\ \sigma \in \config\,.
\ee

\item \textbf{Energy Bound:}
For all $\sigma \in \config$,  $\theta \in \Theta$ and $e \in \edges$,
\be \label{IneqMainLemmaApprEnergyBd}
\pi(e_{-}) \cL(e_{-}, e_{+}) &\geq e^{-2 \beta} \pi(\sigma), \quad \, \qquad \sigma \in G_{\theta}^{(\mathrm{init})}(e) \cup G_{\theta}^{(\mathrm{mid})}(e) \\
\pi(e_{-}) \cL(e_{-}, e_{+}) &\geq  \pi(\sigma), \qquad \quad \, \qquad \sigma \in G_{\theta}^{(\mathrm{fin})}(e).
\ee

\end{enumerate}
\end{prop}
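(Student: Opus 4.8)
The plan is to prove the four items essentially separately; parts (1), (3), (4) are short, and part (2), the congestion bound, is the real content. For the \emph{Covering Property} (1) the inclusion ``$\supseteq$'' is immediate from the definitions, so only ``$\subseteq$'' needs an argument: if $\sigma\in G(e)$ there are $i\in[1:n(\sigma)]$ and $R\in F(\sigma,i)$ with $e\in\gamma_{\sigma,R}$, so $\flowtot{\sigma}(\gamma_{\sigma,R})>0$ and Lemma~\ref{lem:SplitRecov} forces $R\in F(\sigma,\Id(e))$; since the sets $F(\sigma,\cdot)$ are pairwise disjoint, $i=\Id(e)$, and since $e$ lies on $\gamma_{\sigma,R}$ its type is one of $\mathbf{init},\mathbf{mid},\mathbf{fin}$; with $\theta=\underline{\theta}\big(r(\sigma,\Id(e))\big)$ (well defined as the $\cV_\bullet$ partition $[0:L]^L$) this places $\sigma$ in the matching $G_\theta^{(\cdot)}(e)$. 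For \emph{Many Options} (3) the bound on $n(\sigma)$ is Lemma~\ref{LemmaCountingSplits} together with $n(\sigma)=1\vee(m(\sigma)-1)$; for $F(\sigma,i)\neq\emptyset$ with $i\in[1:n(\sigma)]$ I would split on sparse versus dense: in the sparse case $F(\sigma,i)=\allallext{\sigma,\cS_i(\sigma)}$ is non-empty because (Lemma~\ref{LemmaCountingSplits}) every split component of index $\leq n(\sigma)$ carries at least $100L$ defects, so Lemma~\ref{LemmaSplitsManyRect} applies; in the $\theta$-dense case there is a row of $\cS_i(\sigma)$ of occupancy $\vmax-\theta\geq2$ — using the inequality defining $\bar\cV_\theta$ applied with the offset $k$ for which $\vmax-\theta-k=\vmax$, and the recursion defining $\cV_\bullet$ — whence $|\allext{\sigma,\cS_i(\sigma)}{j}|\geq\tfrac12(\vmax-\theta-1)^2\geq1$ by Lemma~\ref{LemmaSplitsManyRect}.

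For the \emph{Energy Bound} (4), since the dynamics are Metropolis for $\pi$ and $\pi(\eta)=\pi(+)e^{-\beta|D(\eta)|}$ by \eqref{eq:simple}, I would first record the identity $\pi(e_-)\cL(e_-,e_+)=\pi(+)\,e^{-\beta\max(|D(e_-)|,\,|D(e_+)|)}$, which reduces the claim to $\max(|D(e_-)|,|D(e_+)|)\leq|D(\sigma)|+2$ when $\type=\mathbf{init}$ or $\mathbf{mid}$, and $\leq|D(\sigma)|$ when $\type=\mathbf{fin}$. Every configuration $\eta$ along $\gamma_{\sigma,R}$ is obtained from $\sigma$ by flipping all spins of a lexicographic prefix $S$ of $\cB_{-}(R)$, and $D(\eta)\triangle D(\sigma)=\{w:|B_w\cap S|\text{ odd}\}$; a direct check shows such prefixes are ``staircase'' sets whose associated symmetric difference has at most six cells, and that $|D(\eta)|-|D(\sigma)|\leq2$ always, improving to $\leq0$ once the flipped spin lies in the lexicographically last row of $\cB_{-}(R)$ (because by then at least three corners of $R$, all of them defects of $\sigma$, have been removed, whereas at most three further cells can have been created). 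This yields exactly the required inequalities at $e_\pm$.

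The bulk of the work is \emph{Small Congestion} (2). Fix $e=(e_-,e_+)$, $U$, $\theta$; put $i=\Id(e)$ and $x(e)=(a,b)$. By Lemma~\ref{lem:SplitRecov} every path through $e$ charged by some $\flowtot{\sigma}$ is $\gamma_{\sigma,R}$ with $R\in F(\sigma,i)$, and un-flipping from $e_-$ the already-traversed prefix of $\cB_{-}(R)$ shows $\sigma$ is determined by the pair $(e,R)$ up to the bounded choice of orientation in Definition~\ref{DefRectRemovePath}; hence
\[
\sum_{\sigma\in G_\theta^{(U)}(e)}\sum_{\gamma\ni e}\flowtot{\sigma}(\gamma)\ \lesssim\ \sum_{(\sigma,R)\ \text{compatible}}\frac{1}{n(\sigma)\,|F(\sigma,i)|}.
\]
I would bound this by pitting a count of compatible pairs against a lower bound on $n(\sigma)\,|F(\sigma,i)|$. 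For the denominator, $n(\sigma)\gtrsim|D(\sigma)|/L\gtrsim|D(e_-)|/L$ (item (3), Lemma~\ref{LemmaCountingSplits}, and $\big||D(\sigma)|-|D(e_-)|\big|\leq6$) and, in the $\theta$-dense case, $|F(\sigma,i)|\geq\tfrac12\,\num{r(\sigma,i)}{\vmax-\theta}(\vmax-\theta-1)^2$ by Lemma~\ref{LemmaSplitsManyRect}, with $(\vmax-\theta-1)^2\gtrsim\beta^4$ since $\vmax>\beta^2$ and $\theta\leq\beta$; in the sparse case $|F(\sigma,i)|=|\allallext{\sigma,\cS_i(\sigma)}|\gtrsim|D(e_-)\cap\cS_i(e_-)|$.

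For the numerator — the reconstruction — the type $U$ and the site $x(e)$ determine where in $\cB_{-}(R)$ the current spin lies, and the fresh defects left in $e_-$ by the partial flip point back at $R$. Since $e_-$ differs from $\sigma$ on only a bounded number of cells near the flipped columns, the top edge of $R$ sits in a row whose $e_-$-occupancy is within a bounded additive error of $\vmax(e_-)-\theta$; the inequality defining $\bar\cV_\theta$ (rows at nearby occupancy levels have multiplicities within a factor $\beta$) together with this bounded perturbation caps the number of candidate top rows at $\lesssim\beta\,\num{r(\sigma,i)}{\vmax-\theta}$, and then the corners of $R$, being defects in that row, contribute only $\lesssim(\vmax-\theta)^{O(1)}$ further possibilities — here a case analysis over which of the $\leq6$ debris cells coincide with pre-existing defects of $\sigma$ handles overlapping rectangles and is where the remaining polynomial-in-$\beta$ losses are incurred — after which $R$ is pinned down. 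Multiplying by the denominator bound, the factors $\num{r(\sigma,i)}{\vmax-\theta}$ cancel and the surviving powers of $\beta$ and $\vmax-\theta$ (using $(\vmax-\theta)^2\gtrsim\beta^4$) collapse to $\lesssim\beta^4L/|D(e_-)|$ for $U\in\{\mathbf{init},\mathbf{mid}\}$; for $U=\mathbf{fin}$ almost all of $\cB_{-}(R)$ has been flipped, so $e_-$ retains less information about the bottom edge of $R$, costing an extra factor $L$ and one fewer power of the occupancy and yielding $\lesssim\beta^2L^2/|D(e_-)|$. The sparse case runs identically, with the $\cV_\theta$-multiplicity bounds replaced by the uniform bound ``no row has more than $\beta^2$ defects'' defining $\cV_0$ and the denominator controlled via $\allallext{\sigma,\cS_i(\sigma)}$, and stays within the same budget. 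The hard part will be exactly this reconstruction step in the $\theta$-dense case — making ``the debris in $e_-$ points back at $R$'' quantitative and bookkeeping the powers of $\beta$ so that the reconstruction ambiguity cancels the lower bound on $|F(\sigma,i)|$ up to the claimed polynomial slack.
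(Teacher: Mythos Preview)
Your proposal is correct and follows essentially the same approach as the paper: parts (1), (3), (4) are handled the same way, and for (2) both arguments pit a reconstruction count of compatible $(\sigma,R)$ pairs against a lower bound on $n(\sigma)\,|F(\sigma,i)|$, with the key cancellation coming from the $\bar\cV_\theta$ multiplicity condition (packaged in the paper as Lemma~\ref{LemmaAbsCombLemma}) and the distinction between $\mathbf{fin}$ and $\mathbf{init}/\mathbf{mid}$ arising from whether the left column of $R$ is still pinned by a surviving defect. The paper organizes the reconstruction counts into explicit lemmas (Lemmas~\ref{LemmaTypAncBound} and~\ref{LemmaATypAncBound}) with concrete exponents $\umax^2\,|H_\theta(u)|$ and $L\,\umax\,|H_\theta(u)|$ rather than your $(\vmax-\theta)^{O(1)}$, and splits the final step into the regimes $|D(\sigma)|\lesssim L$ versus $|D(\sigma)|\gtrsim L$ to handle the fact that $n(\sigma)\gtrsim|D(\sigma)|/L$ only holds in the latter; but these are bookkeeping refinements of exactly the plan you describe.
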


To give the congestion bounds above we will use the following preliminary results related to the row occupancy vectors. 
\begin{lemma}
	\label{lem:deabs}
  Fix $e \in \edges$, and let $I= \Id(e_-)$ as in Definition \ref{def:edgesplitfn}, define $u \in [0:L]^2$ by $u = r(e_-,I)$, then
   \[
   \sup_{\s \in G(e) }\|r(\s,I)-u\|_\infty \leq 8\,,
   \] 
   	where $r(\s,I)$ is given in Definition \ref{def:OccVec}.
\end{lemma}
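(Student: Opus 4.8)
The plan is to fix an arbitrary $\sigma \in G(e)$ and prove $\|r(\sigma, I) - r(e_-, I)\|_\infty \leq 8$; since $u = r(e_-, I)$ does not depend on $\sigma$, this is exactly the assertion. Write $x = (x_1,x_2)$ for the spin flipped by $e$. By Lemma~\ref{lem:SplitRecov} there is a rectangle $R = [a\!:\!b]\times[c\!:\!d]$ with $\gamma_{\sigma,R}\ni e$ and $R \in F(\sigma, I)$, and by construction of $F$ (Definition~\ref{def:ReconRecs}) all vertices of $R$ lie in $\cS_I(\sigma)$, so in particular $s_I(\sigma) \leq a$ and $b \leq s_{I+1}(\sigma)-1$. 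The first step is to observe that $e_-$ is obtained from $\sigma$ by flipping the spins of $\cB_{-}(R)$ in some prefix of the order used in Definition~\ref{DefRectRemovePath}; since the mirror map $M$ there fixes every column and merely reverses the row index (so that $r(M\sigma,I)$ is a row‑reversal of $r(\sigma,I)$ and hence $\|r(M\sigma,I)-r(Me_-,I)\|_\infty = \|r(\sigma,I)-r(e_-,I)\|_\infty$), it suffices to treat the lexicographic case, in which the flipped set $S'$ is a ``staircase'': a block of complete top rows of $\cB_{-}(R)$ together with a left‑prefix of the next row. A plaquette variable $p_z$ differs between $\sigma$ and $e_-$ precisely when $|B_z \cap S'|$ is odd, and a routine inspection of the boundary of $S'$ shows this occurs only for $z$ lying in at most three columns (the left‑edge column $a$, the right‑edge column $b$, and the ``step'' column) and at most three rows (the top row of $R$, the partially‑flipped row, and the row just below it). Hence $|D(\sigma)\,\triangle\,D(e_-)| \leq 6$, with at most two elements in any fixed row, at most two in any fixed column, and all of them in columns contained in $[a\!:\!b] \subseteq \cS_I(\sigma)$.

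Next I would pin down the two endpoints of $\cS_I$. As in the proof of Lemma~\ref{lem:SplitRecov}, $\cS_j(\sigma) = \cS_j(e_-)$ for every $j < I$, so the two splits share a left endpoint $s := s_I(\sigma) = s_I(e_-)$. Put $g(j) = |D(\sigma)\cap\bar\Lambda(s,j)|$ and $h(j) = |D(e_-)\cap\bar\Lambda(s,j)|$, so that $\cS_I(\sigma)$ and $\cS_I(e_-)$ consist of the columns $\{s,\dots,c_\sigma\}$ and $\{s,\dots,c_{e_-}\}$, where $c_\sigma = s_{I+1}(\sigma)-1$ and $c_{e_-} = s_{I+1}(e_-)-1$ are, by Definition~\ref{DefBalancedSplits}, essentially the first columns at which $g$ and $h$ reach $100L$. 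By the first step $|g-h|\leq 6$ everywhere, and $g-h$ is a constant of absolute value at most $6$ on all columns $\geq b$. Using $b \leq c_\sigma$ together with $a \leq x_1-1 \leq c_{e_-}$ (the latter from Lemma~\ref{lem:SplitRecov}), one checks that the columns belonging to exactly one of $\cS_I(\sigma), \cS_I(e_-)$ form an interval $J$ lying strictly to the right of column $a$; that $D(\sigma)$ and $D(e_-)$ agree on all of $J$ outside the at most two affected edge‑columns; and that the cumulative defect count of $D(\sigma)$ (which equals that of $D(e_-)$ outside those columns) increases by fewer than $6$ over all but the last column of $J$. Consequently any fixed row meets $D(\sigma)$ in at most $6$ columns of $J$.

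Putting this together: for each row index $k$, $|r_k(\sigma,I) - r_k(e_-,I)|$ is at most the number of elements of $D(\sigma)\,\triangle\,D(e_-)$ in row $k$ (at most $2$, from the first step) plus the number of columns of $J$ meeting $D(\sigma)$ in row $k$ (at most $6$, from the second step), giving the bound $8$. Since $\sigma\in G(e)$ was arbitrary, this proves the lemma.

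The step I expect to be the main obstacle is controlling the right endpoint $c_{e_-}$ of the split. Flipping the rectangle $R$ changes the number of defects in any single column of $\cB(\L)$ by at most $2$, but it can nonetheless shift the column at which the cumulative count of a split first reaches $100L$ by an arbitrarily large number of columns, if that threshold happens to land inside a long run of almost‑empty columns — so $\cS_I(\sigma)$ and $\cS_I(e_-)$ can differ in many columns. The reason the lemma still holds with the particular constant $8$ is exactly that such a run is almost empty by the definition of the split boundary in Definition~\ref{DefBalancedSplits}: it accumulates only $O(1)$ defects, hence meets any fixed row in only $O(1)$ cells. Tracking this constant carefully is what produces $8$ rather than something larger.
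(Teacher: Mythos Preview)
Your proposal is correct and follows essentially the same approach as the paper: both argue that the defect configurations $D(\sigma)$ and $D(e_-)$ differ in at most six sites (at most two per row), that the left endpoints $s_I(\sigma)=s_I(e_-)$ agree (via Lemma~\ref{lem:SplitRecov}), and that the symmetric difference $J$ of the two $I$th split pieces can contain at most six defect-bearing columns because the cumulative defect counts $g,h$ differ by at most $6$ and both cross the threshold $100L$ near the right endpoint; combining $2+6$ yields the bound $8$. Your treatment is more explicit about the staircase structure of the flipped set and the role of the mirror map $M$, but the decomposition and the arithmetic are identical to the paper's.
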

\begin{proof}
	Fix $e \in E$ and $\s \in G(e)$, then there exists a rectangle $R$ such that $e \in \gamma_{\s,R}$.
	Any Rectangle-Removal Path (see Definition \ref{DefRectRemovePath}) can change the number of defects in any row of the lattice by at most two, i.e. $|\occ{k}(e_-) - \occ{k}(\s)| \leq 2$ for $k \in [0:L]$. Furthermore the total number of defects can differ by at most six, $\big| |D(e_-)|- |D(\s)|\big|\leq 6$. By the same argument as in the proof of Lemma \ref{lem:SplitRecov} we have $s_{I}(e_-) = s_{I}(\s)$, and since  $\big| |D(e_-)|- |D(\s)|\big|\leq 6$ we know that $S_{I}(e_-)\triangle S_I(\s)$ contains at most six columns which contain defects in $e_-$. Therefore, the difference in the number of defects in any row of the $I^{\mathrm{th}}$ split is bounded above by $2+6=8$.
\end{proof}

The following lemma will be used, in the $\theta$-dense case, to bound the number of rows a rectangle removal path could have started from. 

\begin{lemma}
  \label{LemmaAbsCombLemma}
  For each $u \in [0:L]^L$ and $\theta \in \Theta\setminus\{0\}$, let
  \[
  	\Delta_u^{\theta} =\{v \in \cV_\theta\,:\, \|v-u\|_\infty \leq 8 \}\,,
  \]
  and 
  \begin{align}
  	H_\theta(u) :=   \{i \in [1:L]\,:\, \exists\, v \in \D_u^\theta \ \ \textrm{s.t.}\ \  v_i = \vmax - \theta\}\,,
  \end{align}
  then
  \begin{align}
    \label{eq:Hbound}
  	\left|H_\theta(u)\right| \lesssim \beta \inf_{v \in \Delta_u^\theta} \num{v}{\vmax - \theta}\,.
  \end{align}
\end{lemma}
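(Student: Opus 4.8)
The plan is to unwind the definitions of $\cV_\theta$, $\D_u^\theta$ and $H_\theta(u)$ and reduce the bound on $|H_\theta(u)|$ to counting rows with a fixed occupation value in some vector $v \in \D_u^\theta$. First I would fix $u \in [0:L]^L$ and $\theta \in \Theta \setminus \{0\}$, and suppose $\D_u^\theta \neq \emptyset$ (otherwise $H_\theta(u) = \emptyset$ and there is nothing to prove). Pick any reference vector $v^\ast \in \D_u^\theta$; by definition of $\D_u^\theta$ every $v \in \D_u^\theta$ satisfies $\|v - v^\ast\|_\infty \leq 16$, and in particular $|\vmax - v^\ast_{\rm max}| \leq 16$, so all the ``top values'' $\vmax - \theta$ for $v \in \D_u^\theta$ lie in a window of width at most $32$ around $v^\ast_{\rm max} - \theta$. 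Thus an index $i \in H_\theta(u)$ must have $v_i = \vmax - \theta$ for some $v \in \D_u^\theta$, which forces $v^\ast_i$ to lie in the bounded set $\{v^\ast_{\rm max} - \theta - k : |k| \leq 32\}$ (using $\|v - v^\ast\|_\infty \leq 16$ once more to move between $v_i$ and $v^\ast_i$, and between the two maxima). Hence
\be
H_\theta(u) \subseteq \bigcup_{|k| \leq 32} \{ i \in [1:L] \, : \, v^\ast_i = v^\ast_{\rm max} - \theta - k \}\,,
\ee
so $|H_\theta(u)| \leq \sum_{|k|\leq 32} \num{v^\ast}{v^\ast_{\rm max} - \theta - k}$.

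Next I would invoke the defining property of $\cV_\theta \subseteq \bar\cV_\theta$: since $v^\ast \in \cV_\theta$ we have, for every $k \in [-32,32]$, the inequality $\b \num{v^\ast}{v^\ast_{\rm max} - \theta} \geq \num{v^\ast}{v^\ast_{\rm max} - \theta - k}$ (here I am reading $V_{\rm max}$ in Definition \ref{def:goodpart} as $\vmax$). Summing over the $65$ values of $k$ gives
\be
|H_\theta(u)| \leq \sum_{|k| \leq 32} \num{v^\ast}{v^\ast_{\rm max} - \theta - k} \leq 65\, \b\, \num{v^\ast}{v^\ast_{\rm max} - \theta} \lesssim \b\, \num{v^\ast}{v^\ast_{\rm max} - \theta}\,.
\ee
Finally, since $v^\ast$ was an arbitrary element of $\D_u^\theta$, the left-hand side does not depend on it, so I may take the infimum over $v^\ast = v \in \D_u^\theta$ on the right, yielding $|H_\theta(u)| \lesssim \b \inf_{v \in \D_u^\theta} \num{v}{\vmax - \theta}$, which is exactly \eqref{eq:Hbound}.

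I expect the only real subtlety — the ``main obstacle'' — to be bookkeeping of constants and the radius of the $\ell_\infty$-ball: one must be careful that the perturbation radius $8$ in the definition of $\D_u^\theta$ propagates to a radius $16$ between two elements of $\D_u^\theta$, and that moving an equality $v_i = \vmax - \theta$ over to the reference vector $v^\ast$ costs at most another additive $16$ in the ``level'' index (so that the relevant window of $k$'s has half-width at most $32$, matching exactly the range $[-32,32]$ appearing in Definition \ref{def:goodpart} — this is presumably why that specific constant was chosen). Everything else is a direct substitution of the definitions, and no probabilistic or analytic input is needed; the lemma is purely combinatorial.
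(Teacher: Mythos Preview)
Your proof is correct and follows essentially the same approach as the paper. The only cosmetic difference is that the paper first localizes $H_\theta(u)$ in terms of the coordinates of $u$ itself and then transfers the estimate to an arbitrary $v\in\D_u^\theta$, whereas you go directly to a reference $v^\ast\in\D_u^\theta$; both routes use the triangle inequality in $\ell_\infty$ and the defining property of $\cV_\theta$ in exactly the same way, landing on the window $|k|\leq 32$ and then applying $\num{v}{\vmax-\theta-k}\leq\beta\,\num{v}{\vmax-\theta}$.
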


\begin{proof}
	Fix $u \in [0:L]^L$ and $\theta \in \Theta\setminus\{0\}$. We first show that 
	\[
	H_\theta(u) \subseteq \{i \in [1:L]\,:\, \exists\, \d \in [-16:16]\ \ \textrm{s.t.} \ \ u_i = \umax - \theta + \delta\}\,.
	\]
	Fix $j \in [1:L]$ and suppose there exists a $v \in \D_u^\theta$ such that $v_j = \vmax - \theta$, then $u_j= \vmax - \theta + \d'$ for some $\d' \in [-8:8]$ by definition of $\D_u^\theta$. Similarly $\umax = \vmax + \d''$ for some $\d'' \in [-8:8]$, and putting these together $u_j = \umax - \theta + \d$ for some $\d \in [-16:16]$, proving the claim. We now show that the set on the right hand side of the display above is not too large. Observe that
	\[
	\left| \{i \in [1:L]\,:\, \exists\, \d \in [-16:16]\ \ \textrm{s.t.} \ \ u_i = \umax - \theta + \delta\} \right| = \sum_{\d \in [-16:16]}\num{u}{\umax - \theta + \d}\,,
	\]
	and by the same argument as above, for any $v \in \D_u^\theta$ we have \[\num{u}{\umax - \theta + \d} \leq \sum_{\d' \in [-16:16]}\num{v}{\vmax-\theta + \d + \d'}\,.\]
	 Hence 
	\[
	\left| \{i \in [1:L]\,:\, \exists\, v \in \D_u^\theta \ \ \textrm{s.t.}\ \  v_i = \vmax - \theta\}\right| \leq 256 \sup_{k \in [-32:32]} \num{v}{\vmax-\theta + k}\lesssim \beta \num{v}{\vmax - \theta}\,,
	\]
	where the last inequality follows from the definition of $\cV_\theta$.
\end{proof}

We now give the main bounds on the sizes of $G_{\theta}^{(\mathrm{init})}$, $G_{\theta}^{(\mathrm{mid})}$ and $G_{\theta}^{(\mathrm{fin})}$ used for bounding the congestion in Proposition \ref{prop:PTmain}. We give bounds in two cases, corresponding to initially sparse configurations and initially dense configurations.

\begin{lemma} [Number of Sparse Preimages] \label{LemmaTypAncBound}
We consider the original lattice $ [1\!:\!L]^{2}$. Fix an edge $e = (e_{-},e_{+}) \in \edges$. Let $x = (x_{1},x_{2})$ be the spin that is flipped by edge $e$, so that $e_{+} = e_{-}^{x}$.  Let $I = \Id(e)$.
We have:
\be \label{IneqTypAncBound1}
|\{ (\sigma,R) \in G_{0}^{(\mathrm{fin})}(e) \times \rect_{[1:L]^{2}} \, : \, e \in \gamma_{\sigma,R} \textrm{ and } \flowtot{\s} (\g_{\s,R}) >0 \}| &\lesssim \b^2 L^{2}
\ee
 and
\be \label{IneqTypAncBound2}
|\{ (\sigma,R) \in G_{0}^{(\mathrm{init})}(e) \times \rect_{[1:L]^{2}} \, : \, e \in \gamma_{\sigma,R} \textrm{ and } \flowtot{\s} (\g_{\s,R}) >0  \}|  \lesssim \b^4 L \\
|\{(\sigma,R) \in G_{0}^{(\mathrm{mid})}(e) \times \rect_{[1:L]^{2}} \, : \, e \in \gamma_{\sigma,R} \textrm{ and } \flowtot{\s} (\g_{\s,R}) >0 \}|  \lesssim \b^4 L\,.
\ee
\end{lemma}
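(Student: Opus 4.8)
We bound the number of pairs $(\sigma, R)$ with $e \in \gamma_{\sigma,R}$, $\flowtot{\sigma}(\gamma_{\sigma,R}) > 0$, and $\sigma$ sparse in its $I$th split part, $I = \Id(e)$. The key observation is that, when $\sigma$ is sparse, $F(\sigma,I) = \allallext{\sigma,\cS_I(\sigma)}$ consists of all extended rectangles with at least three defect corners in the $I$th part; so it suffices to count, for each type $U$, the number of pairs $(\sigma,R)$ with $\type(e,R,\sigma) = U$ and $R$ an extended rectangle of $\sigma$ lying in $\cS_I(\sigma)$. Throughout, $x = (x_1,x_2)$ is the flipped site, and we use that along $\gamma_{\sigma,R}$ the defect configuration $D(e_-)$ differs from $D(\sigma)$ only in the three columns $\ell, x_1-1, r$ (the left edge, current column, and right edge of $R$); see Figure \ref{FigRectRemoveDiff}. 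Hence $\sigma$ is determined by $e_-$ once we know $R$ together with the (at most three) columns' worth of defect data that differ — and this differing data is itself constrained to lie among a bounded number of configurations by Definition \ref{DefRectRemovePath}.

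The counting now proceeds by first choosing $R$, then reconstructing $\sigma$. Since $R$ is an extended rectangle of $\sigma$ passing through the flipped site $x$ (more precisely, $x \in \cB_-(R)$ and $x_1 - 1 \in \cS_I(e_-)$), the right edge $r$ and left edge $\ell$ of $R$ are columns of $\cB(\L)$; choosing them costs at most $L^2$. Given $R$, the heights of $R$ (its top row $y_2$ and bottom row $y_1$) are also constrained: for a \textbf{fin}-type edge $x$ is in the \emph{last} row of $\cB_-(R)$, so $x_2$ determines $y_1$ (up to the constant offset from $\cB_-$), but $y_2$ is free, costing a further factor $L$ — giving the $L^2$ (two of the three "degrees of freedom" in $R$ cancel against $x$, but for \textbf{fin} only one does, leaving $L^2$). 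For \textbf{init} and \textbf{mid} edges, $x$ is in the top row, resp. a middle row, and in the \textbf{init} case both the top and one lateral coordinate are pinned by $x$, so only $O(1)$ choices of the vertical extent remain, reducing the rectangle count to $O(L)$; the remaining factor $\b^4$ (for \textbf{init}/\textbf{mid}) and $\b^2$ (for \textbf{fin}) will come from reconstructing the bounded number of differing columns' defect data. Here the sparsity hypothesis $v \in \cV_0$, i.e. every row of the $I$th part has at most $\b^2$ defects, is what controls those columns: each of the three affected columns, in $e_-$, can differ from $\sigma$ only in a bounded ($\leq 2$) number of rows by Lemma \ref{lem:deabs}-type reasoning, and the positions of those rows lie in a set of size $O(\b^2)$ because any column passing through a high-occupancy region would force a row of occupancy $> \b^2$; squaring (two lateral edges) gives $\b^4$, while for the \textbf{fin} type only the bottom of $R$ interacts with $x$ and we get $\b^2$. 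Summing over the bounded reconstruction data and over $\theta$ (which here is forced to be $0$) completes the bounds.

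The main obstacle is the bookkeeping of exactly \emph{which} coordinates of $R$ are pinned by the flipped site $x$ in each of the three edge types, and correspondingly how many rows/columns of residual defect freedom remain once $e_-$ and $R$ are fixed. The geometry of $\gamma_{\sigma,R}$ (Definition \ref{DefRectRemovePath}) must be unwound carefully: in the \textbf{fin} case the entire rectangle has essentially collapsed by the time $e$ is traversed, so $\sigma$ is close to $e_-$ but the top of $R$ is unconstrained; in the \textbf{init} case almost nothing has been flipped, so $R$'s top two corners are (near) defects of $e_-$ and $x$ lies on the top row, pinning two of the three rectangle parameters. One must also check that the factor-of-$L$ slack in $m(\sigma)$ versus the "correct" split has been absorbed (it has, via $\Id$ and Lemma \ref{lem:SplitRecov}, which guarantees we only ever reconstruct within $\cS_I(\sigma) = \cS_I(e_-)$). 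Once the type-by-type pinning is pinned down, each bound is a short product of the factors tallied above, so no delicate estimate beyond Lemma \ref{lem:deabs} and the sparsity bound $\vmax \leq \b^2$ is needed.
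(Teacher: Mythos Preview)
Your outline has the right high-level shape --- count pairs $(\sigma,R)$ by counting rectangles $R$, using that $\sigma$ is determined by $(e_-,R)$ --- but the mechanism you propose for the powers of $\beta$ is wrong, and a key geometric constraint is missing.

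First, you write that the factors $\beta^2$ and $\beta^4$ come from ``reconstructing the bounded number of differing columns' defect data,'' arguing that each of the three altered columns carries $O(\beta^2)$ worth of freedom in the rows where $\sigma$ and $e_-$ disagree. But you already (correctly) noted that $\sigma$ is \emph{uniquely} determined by $(e_-,R)$, so once $R$ is fixed there is no residual reconstruction to perform --- this contributes a factor of $1$, not $\beta^2$. The $\beta$ factors in the paper come from a different place entirely: they constrain the \emph{column coordinates of the corners of $R$}. For instance, in the $\mathbf{init}/\mathbf{mid}$ case, the left column of $R$ must intersect row $x_2-1$ at a defect of $e_-$ (or at the single point $(x_1-1,x_2-1)$); since $\sigma$ is sparse, that row has at most $\beta^2 + O(1)$ defects, so there are $\lesssim \beta^2$ choices for the left column. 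A second factor of $\beta^2$ comes from choosing the right column by the same reasoning. Your claim that sparsity controls columns via ``any column passing through a high-occupancy region would force a row of occupancy $> \beta^2$'' is not the right direction: sparsity is a row constraint, and it is applied directly to the row $x_2-1$ (or the bottom row of $R$) to limit column choices.

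Second, and more seriously, your argument for the $\mathbf{init}/\mathbf{mid}$ case is missing the step that pins down the \emph{bottom} row of $R$. You assert ``only $O(1)$ choices of the vertical extent remain,'' but give no reason. For $\mathbf{mid}$ edges the top row is not determined by $x_2$, so naively both the top and bottom rows are free, costing $L^2$. What saves you is the \emph{extended rectangle} property (Definition \ref{DefTriples}): an $R \in \allallext{\sigma}$ has no defects of $\sigma$ on the open segment between its upper-left and lower-left corners. Consequently, once the upper-left corner is fixed, the lower-left corner must be the \emph{highest} defect of $\sigma$ strictly below row $x_2-1$ in that column --- a single choice. Without invoking this property, your count for $\mathbf{init}/\mathbf{mid}$ would be $\beta^4 L^2$, not $\beta^4 L$. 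Similarly, your $\mathbf{fin}$ accounting is internally inconsistent (you tally $L^2$ for columns, then a further $L$ for $y_2$, then declare the product is $L^2$); the paper's count there is $L^2$ for the top-left corner, $1$ for the bottom-left (since $y_1 = x_2-1$), and $\beta^2$ for the bottom-right (a defect in row $y_1$).
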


\begin{proof}

Before giving a proof, we note that our reconstruction possibilities are illustrated in Figure \ref{FigSparseReconstruction}. The following proof is essentially a detailed explanation of the figure, and the heuristic appearing in the figure's caption does not hide any important details.

\begin{figure}[htb]
\includegraphics[width = 0.8\textwidth]{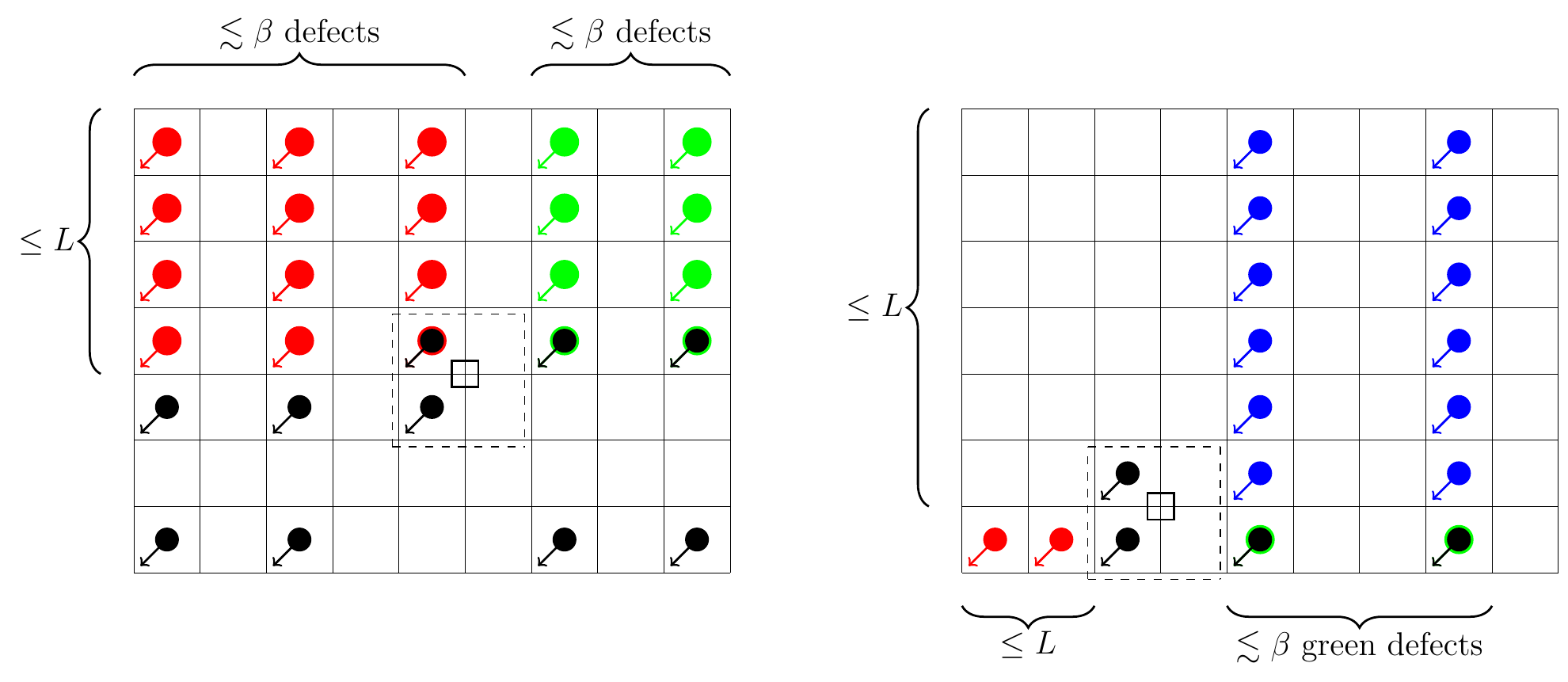}
\caption{\label{FigSparseReconstruction} The defects in $D(e_{-})$ are indicated by black circles, the site $x$ which is flipped in the edge $e$ is marked by a solid box, the dashed box shows the associated defects which are flipped. Left: As in Figure \ref{FigBadConfigNest}, the red and green circles indicate where the initial top defects of the rectangle $R$ in $D(\sigma)$ are allowed to be, for $\mathbf{mid}$ or $\mathbf{int}$ type. Right: Type  $\mathbf{fin}$; the top right corner of $R$ must be above one of the defects of $e_-$, in row $x_2-1$, to the right of $x$ (marked by green), the bottom left corner must be to the left of $x$, in row $x_2-1$ (marked by red circles), and the top right corner must be in one of at most $L$ sites above the bottom left corner.} 
\end{figure}

We begin by proving Inequality \eqref{IneqTypAncBound1}. Let 
\begin{align}
\label{EqPlaceholderSetContainmentSparse}
P = \{ (\sigma,R) \in G_{0}^{(\mathrm{fin})}(e) \times \rect_{[1:L]^{2}} \, : \, e \in \gamma_{\sigma,R} \textrm{ and } \flowtot{\s} (\g_{\s,R}) >0 \}\,.
\end{align}
It is clear from Definition \ref{DefRectRemovePath} that, for any fixed edge $e$ and rectangle $R$, there is at most one $\sigma$ satisfying $e\in \gamma_{\sigma,R}$; denote it $\sigma = \sigma(e,R)$.
Based on this observation, we will find an upper bound on $|P|$ by reconstructing all candidate rectangles $R$ such that $(\sigma(e,R),R) \in P$.

Set $\cS'_I(e) = \L(s_I(e_-),s'_{I+1}(e_-))$, where 
\begin{align}
  \label{eq:Sprime}
  s'_{I+1}(e_-) = \min \{ j > s_{I}(\s) \, : \, |D(e_-) \cap \bar \Lambda(s_{I}(e_-),j-1)| \geq 100 \, L + 6 \}.
\end{align}
Then for any $\s\in G(e)$ with $e \in \gamma_{\sigma,R}$, we have $\S_{I}(\s) \subset \S'_{I}(e)$ and $\cB_{-}(R) \subset \cS'_I(e)$. To reduce notation let $\L = \cS'_I(e)$ for the remainder of this proof.

For a rectangle $R$, denote by $(R^{(u, \ell)}, R^{(u,r)}, R^{(d,r)}, R^{(d,\ell)})$ the 4 corners of $R$ in the usual clockwise order, starting at the upper-left hand corner. Note that it is possible to reconstruct all of $R$ if you know the location of the opposite corners $R^{(u,\ell)}$ and $R^{(d,r)}$. 

 We next introduce notation that is meant to mimic the usual notation for conditional probability. Denote $R^{(a,b)}=(R_1^{(a,b)},R_2^{(a,b)})$ where $a\in\{u,d\}$ and $b\in\{\ell,r\}$.
For
\be
A,B \subset \{ (a,b,c)\}_{a \in \{u,d\}, b \in \{\ell,r\}, c \in \{1,2\}}
\ee
some subsets of the 8 parameters specifying the coordinates of vertices of $R$, denote by $N[ (R^{(a,b)}_{c})_{(a,b,c) \in A} \mid (R^{(a,b)}_{c})_{(a,b,c) \in B}]$ the total number of possible values of the collection $\{R^{(a,b)}_{c}\}_{(a,b,c)\in A}$ such that $(\sigma(e,R),R)\in P$, given that $\{R^{(a,b)}_{c}\}_{(a,b,c)\in B}$ are fixed.
Note that  $N[\cdot | \cdot]$ depends on the edge $e$ and the type, but we do not emphasize this in the notation. By a small abuse of notation, set $N[\cdot] = N[\cdot \mid \emptyset]$.

Note that the configuration $\sigma$ determined by $e$ and  $R$ must have defects in at least three corners of $R$; we assume for the remainder of the proof that it has defects in the upper-left, upper-right, and bottom-left corners. Note that this assumption under-counts the number of valid rectangles by at most a factor of 4, and so it will not affect the order of the final estimate of $|P|$.

We begin with the trivial bound
\be \label{IneqCountTypExtClean1}
N[R^{(u,\ell)}] \leq L^{2},
\ee
since there are at most $L^{2}$ elements of $\L$. Given $R^{(u,\ell)}$, we know that $R^{(d,\ell)}_{1} = R^{(u,\ell)}_{1}$ and $R^{(d,\ell)}_{2} = (x_{2}-1)$, since $\type(e,R,\sigma)=\mathbf{fin}$, so
\be \label{IneqCountTypExtClean2}
N[R^{(d,\ell)} | R^{(u,\ell)}] \leq 1.
\ee

Since $R^{(d,r)}_{2} = R^{(d,\ell)}_{2}$ and $R^{(d,r)} \in D(e_-)$, 
\be \label{IneqCountTypExtClean3}
N[R^{(d,r)} | R^{(d,\ell)}]  \leq \max_{(\sigma,R) \in P} \max_{j \in \mathbb{N}}\occ{j, \S_{I}(\sigma)}(\sigma) + 2 \lesssim \beta^2,
\ee
where  the last inequality follows from the fact that $\ut( v(\sigma,I))=0$ for $(\sigma,R) \in P$.
Combining Inequalities \eqref{IneqCountTypExtClean1}, \eqref{IneqCountTypExtClean2} and \eqref{IneqCountTypExtClean3}, we conclude in this case that
\be
|P| \leq N[(R^{(u,\ell)}, R^{(d,r)})] \lesssim \b^2 L^{2}.
\ee

We now prove Inequality \eqref{IneqTypAncBound2} following the same strategy.
We now define $P$ analogously to Equation \eqref{EqPlaceholderSetContainmentSparse} for type $\mathbf{init}$ or $\mathbf{mid}$.
We begin with the trivial bound $N[R^{(u,\ell)}_{2}] \leq L$, since the height of $\L$ is $L$. Next, note that the column containing $R^{(u,\ell)}$ must intersect the row $(x_2-1)$ at either a defect of $e_{-}$ or at $(x_1-1,x_2-1)$. 
Since there are at most $\max_{j \in \mathbb{N}}\occ{j, \S_{I}(\sigma)}(\sigma) + 2 \lesssim \beta^2$ defects in any row of $e_{-}$ for any $(\sigma,R) \in P$, this observation implies $N[R^{(u,\ell)}_{1} | R^{(u,\ell)}_{2}] \lesssim \b^2$, and so
\be \label{IneqCountTypExtCut1}
N[R^{(u,\ell)}] \lesssim \beta^2 L.
\ee

Inequality \eqref{IneqCountTypExtClean2} also holds in this case, though the argument is slightly different, as follows. Inspecting Equation \eqref{EqAllallextdef} from Definition \ref{DefTriples}, we see that if $\hat{p} \in \Omega_{[\ell_{1}:\ell_{2}]\times[\ell_{3} \times \ell_{4}]}$ is a defect configuration and $\hat{R}$ is a rectangle with three corners in $\allallext{\hat{p}}$, then $\hat{p}$ has no defects in the line between $\hat{R}^{(u,\ell)}$ and $\hat{R}^{(d,\ell)}$. Therefore, $R^{(d,\ell)}$ must be the highest defect of $p$ that is in the same column as $R^{(u,\ell)}$ and below the row $(x_{2}-1)$, and so Inequality \eqref{IneqCountTypExtClean2} holds.
Inequality \eqref{IneqCountTypExtClean3} holds in this case as well, thus combining Inequalities \eqref{IneqCountTypExtCut1}, \eqref{IneqCountTypExtClean2} and \eqref{IneqCountTypExtClean3} we have
\be
|P| \leq N[R^{(u,\ell)}, R^{(d,r)}] \lesssim  \beta^{4} L.
\ee
This completes the proof of Inequality \eqref{IneqTypAncBound2}.
\end{proof}

We now prove the analogous result for the remaining configurations:

\begin{lemma} [Number of Dense Preimages] \label{LemmaATypAncBound}
We consider the original lattice $[1:L]^{2}$. Fix $\theta \in \Theta\setminus\{0\}$ and an edge $e = (e_{-},e_{+}) \in \edges$, let $x = (x_{1},x_{2})$ be the spin that is flipped by edge $e$, and let $u = r(e_-,\Id(e_-))$ with $\umax = \max_{k\in [0:k]}u_k$.
Then we have:
\be \label{IneqAtypAncBound2}
|\{ (\sigma,R) \in G_{\theta}^{(\mathrm{init})}(e) \times \rect_{[1:L]^{2}} \, : \, e \in \gamma_{\sigma,R} \textrm{ and } \flowtot{\s} (\g_{\s,R}) >0 \}|  &\lesssim   \umax^{2} \, |H_{\theta}(u)|\,, \\
|\{  (\sigma,R) \in G_{\theta}^{(\mathrm{mid})}(e) \times \rect_{[1:L]^{2}} \, : \, e \in \gamma_{\sigma,R} \textrm{ and } \flowtot{\s} (\g_{\s,R}) >0  \}|  &\lesssim  \umax^{2} \, |H_{\theta}(u)|\,,  \\
\ee
and
\be \label{IneqAtypAncBound1}
|\{  (\sigma,R) \in G_{\theta}^{(\mathrm{fin})}(e) \times \rect_{[1:L]^{2}} \, : \, e \in \gamma_{\sigma,R} \textrm{ and } \flowtot{\s} (\g_{\s,R}) >0   \}| \lesssim  L  \, \umax \, |H_{\theta}(u)|\,.
\ee
\end{lemma}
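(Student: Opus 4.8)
The plan is to mirror the proof of Lemma~\ref{LemmaTypAncBound}, substituting its two sparse-regime inputs by their $\theta$-dense analogues. Fix $e=(e_-,e_+)$, the flipped site $x=(x_1,x_2)$, $I=\Id(e)$ and $u=r(e_-,I)$, and recall from that proof that for each rectangle $R$ there is at most one $\sigma=\sigma(e,R)$ with $e\in\gamma_{\sigma,R}$, so it suffices to bound the number of candidate rectangles $R$ with $(\sigma(e,R),R)$ in the relevant set. As there, the reconstruction is carried out inside the fixed enlarged split $\cS'_I(e)$ (using $\cB_{-}(R)\subset\cS'_I(e)$, proved exactly as in Lemma~\ref{lem:SplitRecov}), writing $|P|$ as a product of the conditional counts $N[R^{(u,\ell)}]$, $N[R^{(d,\ell)}\mid R^{(u,\ell)}]$, $N[R^{(d,r)}\mid R^{(d,\ell)}]$ introduced in the proof of Lemma~\ref{LemmaTypAncBound}.

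Two inputs are needed. First, fix $\sigma\in G_\theta^{(\mathrm{U})}(e)$ and put $v=r(\sigma,I)$. By Definition~\ref{def:ReconRecs} and the definition of $G_\theta^{(\mathrm{U})}(e)$ we have $v\in\cV_\theta$, and Lemma~\ref{lem:deabs} gives $\|v-u\|_\infty\leq 8$; since $v\in\cV_\theta\subset\bar\cV_\theta$ forces $\vmax>\beta^2$, we get $\umax\geq\vmax-8>\beta^2-8$ and hence $\vmax+O(1)\lesssim\umax$, so every row of $e_-$ restricted to $\cS'_I(e)$ contains $\lesssim\umax$ defects (the $O(1)$ absorbing the at-most-two per-row change along a rectangle-removal path and the at-most-six extra columns of the enlarged split). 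This replaces the $\lesssim\beta^2$ per-row bound that in Lemma~\ref{LemmaTypAncBound} came from $\ut(v(\sigma,I))=0$. Second, by Lemma~\ref{lem:SplitRecov} and Definition~\ref{def:ReconRecs}, $R\in F(\sigma,I)=\bigcup_{j\,:\,v_j=\vmax-\theta}\allext{\sigma,\cS_I(\sigma)}{j}$, so two corners of $R$ lie in a common row $j$ with $v_j=\vmax-\theta$; since $v\in\Delta_u^\theta$ this forces $j\in H_\theta(u)$ by the definition in Lemma~\ref{LemmaAbsCombLemma}. Applying the mirror map $M$ of Definition~\ref{DefRectRemovePath} when needed --- which preserves $\Id$, the edge type, $\umax$ and $|H_\theta(\cdot)|$, at the cost of a factor at most $4$ --- we may assume $R=T_d(y,z)$ with $y,z\in D(\sigma)$ its top corners and $y_2=z_2=j\in H_\theta(u)$, so $R^{(u,\ell)}_{2}=R^{(u,r)}_{2}=j$ has at most $|H_\theta(u)|$ possible values, which is the analogue of the freely-chosen row ($\leq L$ values) in the sparse argument.

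It then remains to re-run the two reconstruction schemes of Lemma~\ref{LemmaTypAncBound}. For $U\in\{\mathbf{init},\mathbf{mid}\}$, the sparse chain of bounds $N[R^{(u,\ell)}_{2}]\leq L$, $N[R^{(u,\ell)}_{1}\mid R^{(u,\ell)}_{2}]\lesssim\beta^2$, $N[R^{(d,\ell)}\mid R^{(u,\ell)}]\leq 1$, $N[R^{(d,r)}\mid R^{(d,\ell)}]\lesssim\beta^2$ becomes $N[R^{(u,\ell)}_{2}]\leq|H_\theta(u)|$ (marked row), $N[R^{(u,\ell)}_{1}\mid R^{(u,\ell)}_{2}]\lesssim\umax$ (the column of $R^{(u,\ell)}$ is pinned to one of $\lesssim\umax$ columns exactly as in the sparse case, now using the $\lesssim\umax$ per-row bound), $N[R^{(d,\ell)}\mid R^{(u,\ell)}]\leq 1$ (unchanged), $N[R^{(d,r)}\mid R^{(d,\ell)}]\lesssim\umax$ (per-row bound); multiplying gives $\lesssim\umax^2\,|H_\theta(u)|$, which is \eqref{IneqAtypAncBound2}. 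For $U=\mathbf{fin}$, the sparse $N[R^{(u,\ell)}]\leq L^2$ becomes $N[R^{(u,\ell)}_{2}]\leq|H_\theta(u)|$ together with $N[R^{(u,\ell)}_{1}\mid R^{(u,\ell)}_{2}]\leq L$, while $N[R^{(d,\ell)}\mid R^{(u,\ell)}]\leq 1$ is unchanged and $N[R^{(d,r)}\mid R^{(d,\ell)}]\lesssim\beta^2$ becomes $\lesssim\umax$; multiplying gives $\lesssim L\,\umax\,|H_\theta(u)|$, which is \eqref{IneqAtypAncBound1}.

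I expect the bulk of the work to be bookkeeping rather than any new estimate: one must check that the reduction via $M$ always places the marked pair of corners into the ``top'' position so that the marked row consistently plays the role of the free row $R^{(u,\ell)}_{2}$ in the sparse reconstruction, that the enlarged-split containment and the uniqueness of $\sigma(e,R)$ survive this reduction, and --- this is the most delicate point --- that for type $\mathbf{mid}$ the column reconstruction argument of Lemma~\ref{LemmaTypAncBound} still applies verbatim with $\beta^2$ replaced by $\umax$. All the genuine combinatorial input is already contained in Lemmas~\ref{lem:deabs} and~\ref{LemmaAbsCombLemma}.
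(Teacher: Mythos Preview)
Your proposal is correct and follows essentially the same approach as the paper. The paper also reconstructs the rectangle corner by corner inside $\cS'_I(e)$, using $|H_\theta(u)|$ to bound the number of candidate marked rows and $\lesssim\umax$ as the per-row defect bound; the only cosmetic difference is that the paper finishes by counting choices for $R^{(u,r)}$ (via ``the right column must contain a defect in row $x_2$, or be column $x_1$'') rather than your $N[R^{(d,r)}\mid R^{(d,\ell)}]$, but since both steps are really just determining the right column of $R$ from the $\lesssim\umax$ defects in row $x_2$, the two arguments are interchangeable.
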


\begin{proof}

The proof will be very similar to the proof of Lemma \ref{LemmaTypAncBound}, and the overall picture is somewhat similar to Figure \ref{FigSparseReconstruction}. The main difference is that, in the previous lemma and figure, we bounded the number of locations for the top row of the rectangle $R$ using the trivial upper bound of $L$. Here, we use the much better bound $|H_{\theta}(u)|$. Since any nontrivial dense configuration is quite complicated, we do not include a new figure in this case.

 We begin by proving Inequality \eqref{IneqAtypAncBound2}.
We set notation, fixing $U \in \{ \mathrm{mid}, \mathrm{init} \}$ and then:
\be 
P &= \{ (\sigma,R) \in G_{\theta}^{(U)}(e) \times \rect_{[1:L]^{2}} \, : \, e \in \gamma_{\sigma,R} \textrm{ and } \flowtot{\s} (\g_{\s,R}) >0 \}\,.
\ee
Again, we bound the number of elements $(\sigma,R) \in P$ by counting the number of ways to reconstructing a suitable rectangle $R$ from our knowledge of $e$.

Set $\L = \S_{I}'(e)$ as defined in the proof  of Lemma \ref{LemmaTypAncBound}. Also, define $R, N[\cdot | \cdot]$ as in the proof of Lemma \ref{LemmaTypAncBound}.
By the same argument as in Lemma \ref{LemmaTypAncBound}, leading to Inequality \eqref{IneqCountTypExtCut1}, we know that $R^{(u,\ell)}$ has to be in a column that either
\begin{enumerate}
\item is $x_1-1$, or
\item contains some $y = (y_{1},y_{2}) \in D(e_{-})$ with $y_{2} = x_{2}-1$.
\end{enumerate}
Since no row of $\L$ contains more than $\umax+8$ defects in $e_{-}$, this implies
\be \label{IneqCountAtypExtInt1prev}
N[R^{(u,\ell)}_{1}] \lesssim  \umax.
\ee
On the other hand, by  the definition of $F(\s,I)$ together with Lemmas \ref{lem:deabs} and \ref{LemmaAbsCombLemma}, the row containing $R^{(u,\ell)}$ must be in $H_{\theta}(u)$. Thus,
\be \label{IneqCountAtypExtInt1}
N[R^{(u,\ell)}_{2} | R^{(u,\ell)}_{1}]  \lesssim |H_{\theta}(u)|.
\ee
Combining Inequalities \eqref{IneqCountAtypExtInt1prev} and \eqref{IneqCountAtypExtInt1}, we have
\be \label{IneqCountAtypExtCut1}
N[R^{(u,\ell)}] \lesssim  \umax \, |H_{\theta}(u)|.
\ee
Next, $R^{(u,r)}$ must be in the same row as $R^{(u,\ell)}$, and must be in either
\begin{enumerate}
\item column $x_{1}$, or
\item a column containing an element $y = (y_{1},y_{2}) \in D(e_{-})$ with $y_{2} = x_{2}$.\footnote{It is not an accident that the value of $y_{2}$ is off-by-one from our analysis of the location of $R^{(u,\ell)}$; see Figure \ref{FigSparseReconstruction}.}
\end{enumerate}
\noindent
Thus,
\be \label{IneqCountAtypExtCut2}
N[R^{(u,r)} | R^{(u,\ell)}] \lesssim \umax\,.
\ee
\noindent

Finally, Inequality \eqref{IneqCountTypExtClean2} holds by the same arguments as in the previous lemma (recall that this inequality had two proofs - one if $U = \mathrm{fin}$, another if $U \in \{ \mathrm{mid}, \mathrm{init} \}$). Thus, combining Inequalities \eqref{IneqCountAtypExtCut1}, \eqref{IneqCountAtypExtCut2} and \eqref{IneqCountTypExtClean2},

\be
|P| \leq N[R^{(u,\ell)}, R^{(d,r)}] \lesssim \umax^{2} \, |H_{\theta}(u)|.
\ee
\noindent
This completes the proof of  Inequality \eqref{IneqAtypAncBound2}.

Next, we prove  Inequality \eqref{IneqAtypAncBound1} using a similar argument, again we redefine $P$ accordingly.
Inequality \eqref{IneqCountAtypExtInt1prev} does not hold in this case, since we have already removed the defect that was originally in the bottom left corner of $R$, so we use instead the weaker trivial bound
\be
N[R^{(u,\ell)}_{1}] \leq L,
\ee
since there are at most $L$ columns in $\L$. Next, noting that Inequality \eqref{IneqCountAtypExtInt1} holds with the same argument as above, this implies
\be \label{IneqCountAtypExtClean1}
N[R^{(u,\ell)}] \lesssim L \, |H_{\theta}(u)|.
\ee
Inequalities \eqref{IneqCountAtypExtCut2} and \eqref{IneqCountTypExtClean2} hold by the same arguments given above. Thus, combining Inequalities \eqref{IneqCountAtypExtClean1}, \eqref{IneqCountAtypExtCut2} and \eqref{IneqCountTypExtClean2}, we conclude
\be
|P| \leq N[R^{(u,\ell)}, R^{(d,r)}] \lesssim  L \,\umax \, |H_{\theta}(u)|.
\ee
This completes the proof of the lemma.
\end{proof}

As our final preliminary congestion estimate, we bound the number of choices given by the function $F$:

\begin{lemma}  \label{LemmaEdgeWeight}
Let $\Lambda = [1:L]^{2}$ and fix $\sigma \in \config$. Let $p = p^{+}(\sigma)$, fix $i \in [1:n(\s)]$ and define $v = r(\s,i)$.  Then we have the following bounds on $| F(\sigma,i)|$:

\begin{enumerate}
\item If $\ut(v) =0 $, then
\be \label{IneqEdgeWeight1}
| F(\sigma,i)| \gtrsim  |D(p|_{\S_{i}(\s)})|.
\ee
\item If $\ut(v) \in \Theta\setminus\{0\}$, then
\be \label{IneqEdgeWeight2}
|F(\sigma,i)| \gtrsim (\vmax - \ut(v))^{2} \, \num{v}{\vmax-\ut(v)}\,.
\ee
\end{enumerate}
\end{lemma}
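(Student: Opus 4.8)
The plan is to read off both inequalities directly from Lemma~\ref{LemmaSplitsManyRect} (``Splits Contain Many Rectangles'') together with Lemma~\ref{LemmaCountingSplits}, after unwinding the definition of $F(\sigma,i)$ in Definition~\ref{def:ReconRecs} according to whether the occupancy vector $v = r(\sigma,i)$ lies in $\cV_0$ (sparse) or in some $\cV_\theta$ with $\theta\geq1$ (dense). No new construction is needed: the argument is a short case analysis, and the only real work is to make sure the various constant-factor losses are harmless.

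\emph{Sparse case ($\ut(v)=0$).} Here $v\in\cV_0$, so $F(\sigma,i)=\allallext{\s,\cS_i(\s)}$ by Definition~\ref{def:ReconRecs}. If the split of $\sigma$ has a single part, i.e.\ $m(\sigma)=1$, then $\cS_1(\sigma)=\cB([1:L]^2)$ is the full plaquette lattice, so $p|_{\cS_1(\s)}=p^+(\sigma)$ and \eqref{IneqManyRect} gives $|F(\sigma,1)|\geq\tfrac14|D(\sigma)|=\tfrac14|D(p|_{\cS_1(\s)})|$, which is \eqref{IneqEdgeWeight1}. If $m(\sigma)\geq2$, then since $i\in[1:n(\sigma)]$ with $n(\sigma)=m(\sigma)-1$ we have $1\leq i\leq m(\sigma)-1$, so \eqref{IneqSplitManyRect} applies and gives $|F(\sigma,i)|\geq\tfrac14\big(|D(p|_{\cS_i(\s)})|-L-1\big)$. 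I would absorb the slack via the observation, recorded in the proof of Lemma~\ref{LemmaCountingSplits}, that every non-terminal part of a split carries at least $100\,L$ defects; hence $|D(p|_{\cS_i(\s)})|-L-1\geq\tfrac12|D(p|_{\cS_i(\s)})|$ for $L$ large, and \eqref{IneqEdgeWeight1} follows.

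\emph{Dense case ($\ut(v)=\theta\in\Theta\setminus\{0\}$).} Here $F(\sigma,i)=\bigcup_{j\,:\,v_j=\vmax-\theta}\allext{\s,\cS_i(\s)}{j}$. First dispose of the degenerate subcases: if $\num{v}{\vmax-\theta}=0$ or $\vmax-\theta=0$, the right side of \eqref{IneqEdgeWeight2} vanishes and there is nothing to prove. Otherwise, for each row $j$ in the union we have $\occ{j,\cS_i(\s)}(\s)=v_j=\vmax-\theta\geq1$, so \eqref{IneqSplitManyRect2} (which uses only the column parity of the split part, valid for every part) bounds $|\allext{\s,\cS_i(\s)}{j}|$ below by $\tfrac12(\vmax-\theta-1)^2$. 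The one genuinely new point is that these sets are essentially disjoint as $j$ varies: an extended rectangle $\mathbf{R}(x,y,z)$ has exactly two distinct rows --- that of $x,y$ and that of $z$ --- and it lies in $\allext{\s,\cS_i(\s)}{j'}$ only when $j'$ is one of those two rows, so each extended rectangle is counted at most twice. Hence $|F(\sigma,i)|\geq\tfrac12\sum_{j\,:\,v_j=\vmax-\theta}|\allext{\s,\cS_i(\s)}{j}|\geq\tfrac14(\vmax-\theta-1)^2\,\num{v}{\vmax-\theta}$. Finally, $v$ being $\theta$-dense forces $\vmax>\b^2$ while $\theta\leq\b$, so $\vmax-\theta$ is large and $(\vmax-\theta-1)^2\geq\tfrac12(\vmax-\theta)^2$, converting the last bound into \eqref{IneqEdgeWeight2}.

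Once Lemmas~\ref{LemmaSplitsManyRect} and~\ref{LemmaCountingSplits} are in hand this is essentially bookkeeping, so I do not anticipate a real obstacle. The two places that need a moment of care are: (i) verifying that the ``$-L-1$'' correction in \eqref{IneqSplitManyRect} is negligible, which is precisely why the definition of $n(\sigma)$ avoids the last, possibly under-loaded, split component; and (ii) the near-disjointness of the sets $\allext{\s,\cS_i(\s)}{j}$ over $j$ in the dense case, which keeps the union bound from losing more than a constant factor. If pressed to name the ``main obstacle,'' it is simply tracking these constant-factor losses so that they combine correctly with the large factors $|D(p|_{\cS_i(\s)})|$ and $\vmax-\theta$ required by \eqref{IneqEdgeWeight1} and \eqref{IneqEdgeWeight2}.
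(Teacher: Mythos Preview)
Your proposal is correct and takes essentially the same approach as the paper: both parts are deduced directly from Lemma~\ref{LemmaSplitsManyRect} (using \eqref{IneqManyRect} and \eqref{IneqSplitManyRect} for the sparse case, and \eqref{IneqSplitManyRect2} for the dense case). The paper's proof is a two-line citation of that lemma; you have simply unpacked the constant-factor bookkeeping---the absorption of the $-L-1$ correction via the $100\,L$ lower bound on non-terminal split components, and the at-most-twofold overlap among the sets $\allext{\s,\cS_i(\s)}{j}$---that the paper leaves implicit in the word ``immediately.''
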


\begin{proof}
Inequality \eqref{IneqEdgeWeight1} follows immediately from Inequality \eqref{IneqManyRect} and \eqref{IneqSplitManyRect} in Lemma \ref{LemmaSplitsManyRect}. Inequality \eqref{IneqEdgeWeight2} follows immediately from \eqref{IneqSplitManyRect2} in Lemma \ref{LemmaSplitsManyRect}.
\end{proof}

Having completed the preliminary congestion bounds we now prove Proposition \ref{LemmaTypicalReconstruction}:

\begin{proof} [Proof of Proposition \ref{LemmaTypicalReconstruction}]
	Properties \textbf{(1)}, and \textbf{(3)} are almost immediate.
	Property \textbf{(1)}, given in  \eqref{IneqLemmaTypReconPath}, follows immediately from Definition \ref{def:Gs}.  
        Property \textbf{(3)}, given in Equation \eqref{IneqMainCombManyOptions}, follows from the definition of $F$ in Definition \ref{def:ReconRecs} and Lemma \ref{LemmaCountingSplits} recalling $n(\s) = 1 \vee (m(\s)-1)$. 
	Thus, it only remains to prove Properties \textbf{(2)} and \textbf{(4)}. 
	
	We prove the last line of Inequality \eqref{IneqMainLemmaApprRecBd}. 
        Fix $e \in \edges$ and  let $x = (x_{1},x_{2})$ be the spin that is flipped by edge $e$ (so that $e_{+} = e_{-}^{x}$) and fix $\theta \in \Theta\setminus\{0\}$ such that $G_{\theta}^{(\mathrm{fin})}$ is not empty, finally let $i = \Id(e)$ and $u=r(e_-,i)$.
	Recalling \eqref{IneqTypAncBound1} of Lemma \ref{LemmaTypAncBound} and Inequality \eqref{IneqAtypAncBound1} of Lemma \ref{LemmaATypAncBound},
	\be \label{IneqBoundSparsePreimageTyp1}
	|\{ (\sigma,R) \in G_{0}^{(\mathrm{fin})}(e) \times \rect_{[1:L]^{2}} \, : \, e \in \gamma_{\sigma,R} \textrm{ and } \flowtot{\s} (\g_{\s,R}) >0 \}| &\lesssim \b^2 L^{2}\,, \\
        |\{  (\sigma,R) \in G_{\theta}^{(\mathrm{fin})}(e) \times \rect_{[1:L]^{2}} \, : \, e \in \gamma_{\sigma,R} \textrm{ and } \flowtot{\s} (\g_{\s,R}) >0   \}| &\lesssim  L  \, \umax \, |H_{\theta}(u)|\,.
	\ee
	On the other hand, consider $\sigma \in  G_{0}^{(\mathrm{fin})}(e)$, by Lemma \ref{LemmaEdgeWeight} and the fact that each element of the split (except possibly the last when $m(\s) > 1$) contains at least $100\,L$ defects, we have
	\be \label{IneqBoundSparsePreimageTyp1Rev1}
	| F(\sigma,i)| \gtrsim \min(L, |D(e_{-})|), \qquad \sigma \in  G_{0}^{(\mathrm{fin})}(e)\,,
	\ee 
        where we used the observation $|D(\s)| \asymp |D(e_-)|$, since any rectangle-removal path creates or removes at most six defects.
	When $\sigma \in  G_{\theta}^{(\mathrm{fin})}(e)$ with $\theta \in [1:\beta]$, we have 
	\be \label{IneqBoundSparsePreimageTyp1Rev2}
	| F(\sigma,i)| &\stackrel{\text{Lemma } \ref{LemmaEdgeWeight}}{\gtrsim}   (\vmax-\theta)^{2} \, \num{v}{\vmax-\theta}
	\gtrsim \umax^2 \, \num{v}{\vmax-\theta} \\
	&\stackrel{\text{Ineq. } \eqref{eq:Hbound}}{\gtrsim} \umax^{2} \, \beta^{-1} |H_{\theta}(u)|\,,
	\ee
        where in the second inequality we used $\vmax \geq \b^2$, $\vmax > \umax - 16$ and $\theta \leq \beta$.
	
	We consider the initially sparse and $\theta$-dense cases separately:
        \begin{enumerate}
		\item \textbf{Sparse initial configurations}. Fix $\t \in  G_{0}^{(\mathrm{fin})}(e)$, in this case, Inequalities \eqref{IneqBoundSparsePreimageTyp1} and \eqref{IneqBoundSparsePreimageTyp1Rev1}  give
		\be
		\frac{|\{ (\sigma,R) \in G_{0}^{(\mathrm{fin})}(e) \times \rect_{[1:L]^{2}} \, : \, e \in \gamma_{\sigma,R} \textrm{ and } \flowtot{\s} (\g_{\s,R}) >0 \}|}{F(\t,i)}  &\leq \frac{\beta^2 L^{2}}{\min(L,|D(e_{-})|)}\,.
		\ee

		\item \textbf{$\theta$-dense initial configurations}. Fix $\t \in  G_{\theta}^{(\mathrm{fin})}(e)$, in this case, Inequalities \eqref{IneqBoundSparsePreimageTyp1} and \eqref{IneqBoundSparsePreimageTyp1Rev2} give
		\be
		\frac{|\{  (\sigma,R) \in G_{\theta}^{(\mathrm{fin})}(e) \times \rect_{[1:L]^{2}} \, : \, e \in \gamma_{\sigma,R} \textrm{ and } \flowtot{\s} (\g_{\s,R}) >0   \}| }{ | F(\tau,i)|} &\lesssim \frac{L  \,\umax \, |H_{\theta}(e)|}{\beta^{-1} \umax^{2} \,  |H_{\theta}(e)|} \\
		&\lesssim \beta^{-1} L \leq  \frac{\beta^2L^{2}}{\min(L,|D(e_{-})|)}.
		\ee
		
	\end{enumerate}
	Combining these two cases,
	\be
	\sum_{\sigma \in G_{\theta}^{(\mathrm{fin})}(e)} \sum_{R\,:\,e \in \g_{\s,R}}  \frac{1}{|F(\sigma,i)|} &\leq \sum_{\sigma \in G_0^{(\mathrm{fin})}(e)}\sum_{R\,:\,e \in \g_{\s,R}} \frac{1}{|F(\sigma,i)|} + \sum_{\sigma \in G_\theta^{(\mathrm{fin})}(e)}\sum_{R\,:\,e \in \g_{\s,R}}  \frac{1}{|F(\sigma,i)|} \\
	&\lesssim  \frac{\beta^2 L^{2}}{\min(L,|D(e_{-})|)}.
	\ee
        If we consider initial configurations $\s$ such that $|D(\s)| \lesssim L$ and apply the trivial bound $n(\s) \geq 1$ we get the desired result by the definition of $\flowtot{\s}$.
	Finally considering the initial configurations $\s$ such that $|D(\s)| \gtrsim L$, in this case we apply \textbf{(3)} proved above to get the desired bound.
        This completes the proof of the last line of Inequality \eqref{IneqMainLemmaApprRecBd}. 

        The proof of the other two inequalities in \eqref{IneqMainLemmaApprRecBd} is essentially identical, with the following simple changes to the two cases:
	\begin{enumerate}
		\item Replace references to Inequality \eqref{IneqTypAncBound1} of Lemma \ref{LemmaTypAncBound} with references to Inequality \eqref{IneqTypAncBound2} of Lemma \ref{LemmaTypAncBound} (the upper bound 
		is better by a factor of $\beta^2 L^{-1}$).
		\item Replace references to Inequality \eqref{IneqAtypAncBound1} of Lemma \ref{LemmaATypAncBound} with references to Inequality \eqref{IneqAtypAncBound2} of Lemma \ref{LemmaATypAncBound} (in this case the upper bound is better by a factor of $L^{-1}\umax$).
	\end{enumerate}

	Finally, we prove Property \textbf{(4)}, the energy bound.
	Inequality \eqref{IneqMainLemmaApprEnergyBd} can be observed directly from the definition of the rectangle-removal paths in Definition \ref{DefRectRemovePath}. We find it easiest to see this by simply following along in Figure \ref{FigRectRemoveDiff}; we now give a short description of what can occur. Begin by assuming there are no defects in the interior of the rectangle. In this case the first spin that is ``flipped" in any row besides the last one will create a new pair of vertically adjacent defects that travel across the row, and the last spin that is ``flipped" in the row will delete both elements of this pair. This proves the second line of Inequality \eqref{IneqMainLemmaApprEnergyBd} in this case. The first line of Inequality \eqref{IneqMainLemmaApprEnergyBd} follows in this case from noticing that the last row of the rectangle will \textit{already} have a pair of vertically aligned defects, and so no new defects are created. Finally, it is straightforward to see that any defects in the \textit{interior} of the rectangle can only decrease the number of excess defects created during a rectangle-removal path.
	This completes the proof of Proposition \ref{LemmaTypicalReconstruction}.
\end{proof}

\subsection{Path Construction: High Density} \label{SubsecPathHighDef}

The first path in this section will be appended to previously-defined paths after roughly $L$ defects have been removed, as it allows us to give a good bound on the total path length (at the cost of an enormous penalty to both congestion and energy). The path simply flips all $-1$ spins to $+1$ spins, in order:

\begin{defn}[Naive Paths] \label{DefNaivePath}
For fixed $\sigma \in \Omega_{[1:L]^{2}}$, we define a ($\{0,1\}$-valued) probability mass $\flowna{\sigma}$ on $\Gamma_{\sigma, +}$ by giving an explicit algorithm for sampling from the measure. Let $z^{(1)},  z^{(2)}, \ldots$ be all points of $[1:L]^{2}$ in lexicographic order.

\begin{enumerate}
\item Initialize by setting $\gamma = \{\sigma\}$ and $i=0$.
\item While $\fin{\gamma} \neq +$, do the following:
\begin{enumerate}
\item Set $i = i +1$.
\item If $\fin{\gamma}_{z^{(i)}} = -1$, do the following:
\begin{enumerate}
\item Set $\sigma = \fin{\gamma}$.
\item Set $\gamma = \gamma \conc ( \fin{\gamma}, \sigma^{z^{(i)}})$.
\end{enumerate}
\end{enumerate}
\item Return the path $\gamma$.
\end{enumerate}
\end{defn}

We then define the complete measure on paths that will be analyzed in the remainder of this paper:

\begin{defn} [Full Random Path] \label{DefFinalPath}

For fixed $\sigma \in \Omega_{[1:L]^{2}}$, we define a probability measure $\flowfull{\sigma}$ on $\Gamma_{\sigma,+}$ by giving an explicit algorithm for sampling from this measure:

\begin{enumerate}
\item Initialize by setting $\gamma = \{\sigma\}$ and $i=1$.
\item While $D(\fin{\gamma}) \neq \emptyset$, iteratively sample subpaths $\gamma_{1},\gamma_{2},\ldots$ according to the following loop.
\begin{enumerate}
\item If $i \leq \cnaive L$, sample $\gamma_{i} \sim \flowtot{\fin{\gamma}}$.
\item If $i > \cnaive L$, sample $\gamma_{i} \sim \flowna{\fin{\gamma}}$, according to Definition \ref{DefNaivePath}.
\item In both of these cases, set $\gamma = \gamma \conc \gamma_{i}$ and then $i = i+1$.
\end{enumerate}
\item Once $\gamma$ satisfies $D(\fin{\gamma}) = \emptyset$, return the path $\gamma$.
\end{enumerate}

When bounding the mixing time, it is useful to consider truncated paths as follows. Fix a truncation level $0 \leq k \leq L^{2}$. We define the measure $\flowtrunc{\sigma}{k}$ on $\Gamma$ by the following algorithm:
\begin{enumerate}
\item Sample the path $(\sigma^{(0)},\ldots,\sigma^{(m)}) \sim \flowfull{\sigma}$.
\item Let $i_{\min} = \min \{i \, : \, |D(\sigma^{(i)})| \leq k\}$.
\item Return the path $\gamma = (\sigma^{(0)},\ldots,\sigma^{(i_{\min})})$.
\end{enumerate}
\end{defn}

\subsection{Path Analysis: Bounds on Congestion and Path Length} \label{SecPathAnalysisLemmas}

We analyze the random path constructed in Definition \ref{DefFinalPath}. We begin by bounding the congestion associated with the subpath given in Definition \ref{DefNaivePath}. We will then bound the congestion associated to entire paths.

\begin{defn} [Compatible Naive Paths]

Fix an edge $e = (e_{-},e_{+}) \in \edges$. We now define the set $\naiveanc{e} \subset \Gamma$ of \textit{compatible naive paths} associated with edge $e$  as follows. Say $\gamma \in \naiveanc{e}$ if it satisfies both of the following:

\begin{enumerate}
\item $e \in \gamma$, and
\item there exists a configuration $\sigma \in \Omega_{[1:L]^{2}}$ so that $\flowna{\sigma}(\gamma) > 0$.
\end{enumerate}

\end{defn}

We now bound the sizes of these sets of compatible paths. Recall the naive paths are deterministic, so $\flowna{\sigma}(\gamma) > 0$ for exactly one path $\gamma$. 

\begin{lemma} [Number of Compatible Naive Paths] \label{LemmaNumAncFullNaivePath}
Fix an edge $e \in \edges$ and integer $k \in \mathbb{N}$. With notation as above,
\be \label{IneqNumAncInterval}
 | \{ \gamma \in \naiveanc{e} \, : \,  |D(\init{\gamma})| = |D(e_{-})| +k \} | \leq  2^{2L}\sum_{m=(k-2L)^+}^{k+2L} \binom{ L^{2}}{ m}\,,
\ee
where $(k-2L)^+= \max\{k-2L,0\}$. 
\end{lemma}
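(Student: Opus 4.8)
The plan is to bound $|\{\gamma \in \naiveanc{e} : |D(\init{\gamma})| = |D(e_-)| + k\}|$ by reconstructing a compatible naive path $\gamma$ from the edge $e$ together with a choice of initial configuration, and then counting the possible initial configurations. Since a naive path is deterministic given its starting configuration (as the lemma statement recalls, $\flowna{\sigma}(\gamma) > 0$ for exactly one $\gamma$), the map $\sigma \mapsto \gamma$ with $\init{\gamma} = \sigma$ is injective; hence it suffices to count the configurations $\sigma$ with $|D(\sigma)| = |D(e_-)| + k$ that can produce a naive path through $e$. So the first step is to reduce the count of paths to a count of admissible initial configurations $\sigma$.

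The key structural observation is that, along a naive path, spins are flipped in lexicographic order, and a spin at $z^{(i)}$ is flipped exactly when the current configuration has value $-1$ there. This means that if $e$ is the edge flipping the spin at site $x = x(e)$, then $e_-$ and the initial configuration $\sigma$ must agree on all sites that are \emph{strictly earlier} than $x$ in lexicographic order (those have already been processed and set to $+1$ in $e_-$ — wait, more carefully: in $e_-$, every site earlier than $x$ that was $-1$ in $\sigma$ has been flipped to $+1$, and every site earlier than $x$ that was $+1$ in $\sigma$ is untouched, so $e_-$ restricted to sites $\leq x$ equals $+1$ on the flipped set). The cleaner way to phrase it: $\sigma$ and $e_-$ differ \emph{only} on sites that are $\leq x$ in lexicographic order, and on that initial segment $e_-$ is forced to be the "partially cleaned up" version of $\sigma$. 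Thus $\sigma$ is determined by $e_-$ together with the set $S$ of sites $\leq x$ where $\sigma$ has a $-1$ that has already been removed in $e_-$; equivalently, $\sigma$ is obtained from $e_-$ by flipping some subset of sites, all lying in the lexicographic initial segment ending at $x$. This gives the bound: the number of such $\sigma$ with a prescribed number of "extra" flipped sites $m$ is at most $\binom{L^2}{m}$ (choosing which $m$ sites of $\Lambda$, which has $\leq L^2$ sites, to flip). Then relate $m$ to $k$: since $|D(\sigma)|$ and $|D(e_-)|$ are defect counts and flipping $m$ spins changes the defect count by at most $2$ per spin near its four plaquettes — more precisely the symmetric-difference argument of Equation \eqref{eq:prod} shows each spin flip changes $|D|$ by at most $4$... hmm, but the stated bound has $|m - k| \leq 2L$, which suggests a sharper accounting; the point is that the flipped region is confined to rows up to $x_2$ and the boundary effect between $\sigma$ and $e_-$'s defect sets is controlled by the at-most-$2L$ sites lying in the single partially-processed row $x_2$ plus column effects, so $||\sigma| \ominus |e_-| \text{ defect discrepancy}| \leq 2L$ and hence $|D(\init\gamma)| = |D(e_-)| + k$ forces $(k-2L)^+ \leq m \leq k + 2L$.

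The last step is to account for the factor $2^{2L}$: this accommodates the ambiguity in reconstructing the \emph{two} boundary rows/columns — or more simply, since the count $\binom{L^2}{m}$ of flipped sites only controls $\sigma$ up to agreement with $e_-$ outside the initial segment, while within the partially-processed row $x_2$ we may have freedom on up to $2L$ sites that is not pinned down by the defect count alone; bounding that freedom crudely by $2^{2L}$ and summing over the admissible range $(k-2L)^+ \leq m \leq k+2L$ yields the stated inequality $2^{2L}\sum_{m=(k-2L)^+}^{k+2L}\binom{L^2}{m}$.

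\textbf{Main obstacle.} The delicate point is the precise bookkeeping of how $|D(\sigma)|$ relates to $|D(e_-)|$ and to the number $m$ of sites on which $\sigma$ and $e_-$ differ — in particular, pinning down the constants ($2L$ in the summation range, $2^{2L}$ prefactor) requires carefully tracking which row and column parity discrepancies arise from the fact that $e_-$ is a "half-cleaned" version of $\sigma$ with the cleaning boundary cutting through row $x_2$. I would handle this by explicitly describing the difference set $\{z : \sigma_z \neq (e_-)_z\}$ as contained in $\{z : z \leq x \text{ lexicographically}\}$, noting it is a union of a full-rows-above-$x_2$ part plus a partial-row-$x_2$ part, and then using \eqref{eq:prod} to bound the change in each plaquette variable, being careful that only plaquettes straddling the cleaning boundary contribute a discrepancy — there are $O(L)$ of those, giving the $2L$ slack.
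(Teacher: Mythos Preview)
Your proposal has a genuine gap in the bookkeeping step. You parametrize the possible initial configurations $\sigma$ by the number $m$ of \emph{spin} sites on which $\sigma$ differs from $e_-$, and then claim $|m - k| \leq 2L$ where $k = |D(\sigma)| - |D(e_-)|$. This relation is false. The difference set $S = \{z : \sigma_z \neq (e_-)_z\}$ can be an arbitrary subset of the lexicographic initial segment $\{z^{(1)}, \ldots, z^{(i-1)}\}$; it is \emph{not} forced to be a union of full rows plus a partial row (that description applies to the initial segment itself, not to $S$). For a concrete counterexample, take $S$ to consist of $m$ pairwise well-separated single sites, all lying several rows above $x_2$. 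Since $e_-$ is identically $+1$ on the initial segment, each such isolated $-1$ in $\sigma$ flips exactly four plaquette variables from $+1$ to $-1$, giving $k = 4m$ and hence $|m - k| = 3m$, which for $m$ large exceeds $2L$ by an arbitrary amount.

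The paper's proof avoids this by parametrizing directly by \emph{defect} positions rather than spin positions, using the bijection $\sigma \mapsto p^+(\sigma)$ of Lemma~\ref{lem:parity}. The key observations are that $D(\sigma)$ and $D(e_-)$ must agree on all plaquette sites strictly below a horizontal strip of width $O(L)$ through row $x_2$ (since $\sigma$ and $e_-$ agree on all the relevant spins), and that $e_-$ has \emph{no} defects strictly above the strip (since $e_-$ is identically $+1$ there). One then reconstructs $D(\sigma)$ as: defects below the strip (forced to match $e_-$, one way), defects on the strip (at most $2^{2L}$ ways), and $m$ defects placed anywhere above the strip (at most $\binom{L^2}{m}$ ways). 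With this meaning of $m$ the relation $(k-2L)^+ \leq m \leq k + 2L$ is immediate, since the strip contributes at most $O(L)$ defects to each of $|D(\sigma)|$ and $|D(e_-)|$. Your strip-and-boundary intuition is on the right track, but it must be applied to the defect configuration, not the spin configuration.
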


\begin{proof}

Figure \ref{FignaiveEnergyBLAG} and its caption give an informal proof; a formal proof follows.

\begin{figure}[h]
\includegraphics[width = 0.4\textwidth]{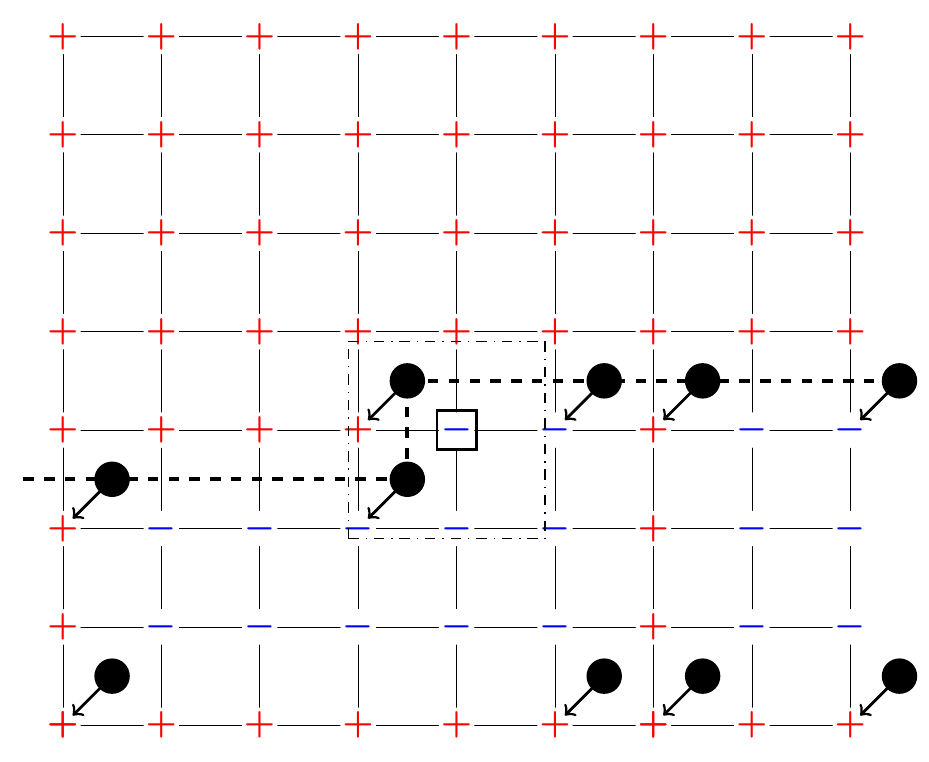}
\caption{\label{FignaiveEnergyBLAG} Reconstruction of $\sigma$ from an observed edge $e$. Defects in $e_-$ are shown by black circles. The small black square indicates the site $x$ where the spin is flipped in $e$. The four defects that are flipped by the edge $e$ are contained in the dot-dashed box. The region above the thick dashed line is all $+$ under $e_{-}$ (and in particular has no defects), and could have had any values in the initial configuration $\sigma$ (so that the total number of defects is $|D(e_-)| +k$). The  $e_{-}$ and $\sigma$ have the same spin configuration below the dashed line, and in particular have the same number of defects in this region excluding those on the dashed line, so there are no choices here. The dashed line intersects at most $(L+2)$ plaquettes, and in principle any of these could contain defects in $e_{-}$ that do not appear in $\sigma$ (in particular there may be up to $(L+2)$ extra defects here and thus $2^{L+2}$ choices). }
\end{figure}

More formally, let $z^{(1)}, z^{(2)}, \ldots$ be the elements of $\Lambda = [1:L]^{2}$ in lexicographic order. Recall that $e_{-} = e_{+}^{z^{(i)}}$ for some $z^{(i)} = (z_{1}^{(i)}, z_{2}^{(i)}) \in \Lambda$ and $1 \leq i \leq |\Lambda|$. Let $\gamma \in \naiveanc{e}$, and let $\sigma = \init{\gamma}$. We note that $\gamma$ is determined entirely by $\sigma$ (since the measure $\flowna{\sigma}$ puts all mass on a single path). Thus, it is enough to bound the possible choices of $\sigma$. We note the following about the possible choices for $\sigma$:

\begin{enumerate}
\item We must have $\sigma(z^{(j)}) = e_{-}(z^{(j)})$ for all $j \geq i$.
\item Knowledge of $e$ gives no obvious restrictions on $\sigma(z^{(j)})$ for $j < i$.
\end{enumerate}
This gives rise to the following restrictions on the locations of the defects of $\sigma$,
\begin{enumerate}
\item They must agree with the defects of $e_{-}$ for all sites $\{ z^{(k)} \, : \,  k > i + L +1\}$. These are the sites below the boundary ``strip" in Figure  \ref{FignaiveEnergyBLAG}.
\item The defects in the set $\{ z^{(k)} \, : \, k < i-1\}$ of sites above the boundary ``strip" can be in any position.
\end{enumerate}

There are also some complicated restrictions on the strip $\{ z^{(k)} \, : \, i-1 \leq k \leq i + L + 1\}$  itself, but we will ignore these restrictions (at the cost of over-counting the true number of possible configurations $\sigma$).
Thus, to create a candidate compatible path starting at ancestor configuration $\sigma$ with $ |D(\sigma)| = |D(e_{-})|+k$, we can:

\begin{enumerate}
\item Place defects in the set $\{ z^{(k)} \, : \,  k > i + L +1\}$ below the horizontal strip so that they agree with $e_{-}$ (there is one way to do this).
\item Place defects on the horizontal strip $\{ z^{(k)} \, : \, i-1 \leq k \leq i + L + 1\}$ in any way (there are $(L+2)$ sites on the strip, so at most $2^{L+2}\leq 2^{2L}$ ways to do this).
\item Place the remaining defects anywhere in the set $\{ z^{(k)} \, : \, k < i-1\}$ above the strip (there are between $(k-2L)^+$ and $k+2L$ defects to place after choosing the configuration on the strip, and so at most $\sum_{m=(k-2L)^+}^{k+2L} \binom{ L^{2}}{ m}$ ways to do this).
\end{enumerate}
Not all candidate ancestors are in fact possible ancestors, but this gives the desired upper bound on the number of possible ancestors (and thus the same upper bound on the number of possible compatible paths).

\end{proof}

Next, we bound the number of compatible paths associated with a full path segment $\gamma_{\sigma,R}$  obtained from the function $F$ in Proposition \ref{LemmaTypicalReconstruction}. We note that any such allowed path $\gamma_{\sigma,R}$ will certainly have $|D(\gamma_{\sigma,R}^{(\mathrm{fin})})| = |D(\gamma_{\sigma,R}^{(\mathrm{init})})| - k$ for $k \in \{2,4\}$, motivating the definition:

\begin{defn} [Long Compatible Non-Naive Paths] \label{DefAncNonNaive}

Fix $\eta \in \Omega_{[1:L]^{2}}$ and $k \in \{2,4\}$. We now define the set $\longanc{k}{\eta} \subset \Gamma$ of \textit{long compatible typical paths} associated with $\eta$ as follows. Say $\gamma \in \longanc{k}{\eta}$ if it satisfies all of the following:

\begin{enumerate}
\item $\fin{\gamma} = \eta$, and
\item $|D(\init{\gamma})| = |D(\eta)| + k$, and
\item There exists $\sigma \in \Omega_{[1:L]^{2}}$ such that $\flowtot{\sigma}(\gamma)>0$.
\end{enumerate}

\end{defn}

We now bound the number of compatible paths for a given configuration:

\begin{lemma} [Number of Long Compatible Non-Naive Paths] \label{LemmaAncestryLongPathBound}

For fixed $\eta \in \Omega_{[1:L]^{2}}$, we have the inequalities

\be \label{IneqAncestryLongPathMainBound}
|\longanc{2}{\eta}| &\leq |D(\eta)| \, (L^{2} - |D(\eta)|)  \leq \frac{L^{4}}{4} \\
|\longanc{4}{\eta}| &\leq (L^{2} - |D(\eta)|)^{2} \leq L^{4} \\
\ee
\end{lemma}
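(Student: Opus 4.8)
The plan is to observe that every path counted by $\longanc{k}{\eta}$ is encoded by a single rectangle, and then to count the admissible rectangles. First I would record a structural fact about the paths $\gamma_{\sigma,R}$: by Definition \ref{DefRectRemovePath}, $\gamma_{\sigma,R}$ flips each spin of $\cB_{-}(R)$ exactly once (in some order), so $\fin{\gamma_{\sigma,R}} = \sigma^{\cB_{-}(R)}$; moreover flipping all spins in $\cB_{-}(R)$ toggles exactly the four plaquette variables sitting at the corners of $R$ and leaves all other plaquette variables unchanged (this is the phenomenon illustrated in Figure \ref{FigRectRemoveDiff}, and is the elementary parity computation that a plaquette variable $p_{z}$ changes iff $|B_{z} \cap \cB_{-}(R)|$ is odd). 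Hence, writing $c$ for the number of corners of $R$ that are defects of $\eta$, we obtain $|D(\sigma)| - |D(\eta)| = (4-c) - c = 4 - 2c$ whenever $\sigma = \eta^{\cB_{-}(R)}$.

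Now fix $\gamma \in \longanc{k}{\eta}$. By condition (3) of Definition \ref{DefAncNonNaive} together with Definition \ref{DefPartialPath}, we have $\gamma = \gamma_{\sigma,R}$ for some rectangle $R$ and some $\sigma$; condition (1) forces $\fin{\gamma} = \eta$, so $\sigma = \eta^{\cB_{-}(R)}$ is determined by $R$, and $R$ (nondegenerate, else $\gamma$ is the trivial one-point path) is in turn recovered from $\gamma$ as the rectangle whose interior $\cB_{-}(R)$ equals the set of sites at which $\init{\gamma}$ and $\fin{\gamma}$ differ. Thus $\gamma \mapsto R$ is injective on $\longanc{k}{\eta}$, and by condition (2) combined with the identity of the previous paragraph, the associated rectangle $R$ has exactly $c = (4-k)/2$ of its corners in $D(\eta)$. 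It therefore suffices to bound the number of rectangles with this property.

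When $k = 2$, so $c = 1$: exactly one corner $a$ of $R$ lies in $D(\eta)$, and the opposite corner lies in $\cB(\Lambda) \setminus D(\eta)$; since a rectangle is determined by any pair of opposite corners, the number of such $R$ is at most $|D(\eta)| \cdot (L^{2} - |D(\eta)|)$, which is at most $L^{4}/4$ by the AM--GM inequality. When $k = 4$, so $c = 0$: all four corners of $R$ avoid $D(\eta)$, in particular its lower-left and upper-right corners lie in $\cB(\Lambda) \setminus D(\eta)$, and as these two corners determine $R$ the number of such $R$ is at most $(L^{2} - |D(\eta)|)^{2} \leq L^{4}$. This yields both inequalities.

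I expect the only step requiring genuine care is the first paragraph, namely verifying that $\gamma_{\sigma,R}$ alters the defect configuration \emph{only} at the four corners of $R$, so that the defect-count hypothesis (2) translates cleanly into the statement that $c$ corners lie in $D(\eta)$; once that is established, the injectivity of $\gamma \mapsto R$ and the two counting bounds are routine.
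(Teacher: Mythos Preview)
Your proof is correct and takes essentially the same approach as the paper: both arguments reconstruct $\gamma$ from $\eta$ via the rectangle $R$, and then bound the number of admissible $R$ by choosing a pair of opposite corners according to whether they lie in $D(\eta)$. Your formulation is arguably a touch cleaner, since you work directly with the corner plaquettes of $R$ and the global toggling identity $\eta = \sigma^{\cB_{-}(R)}$, whereas the paper parametrizes $R$ by the first and last \emph{spin} sites $r^{(\mathrm{init})}, r^{(\mathrm{fin})}$ flipped along $\gamma_{\sigma,R}$ and then argues (implicitly via the same toggling) that for $k=2$ one of these must correspond to a defect of $\eta$ and the other must not.
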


\begin{proof}
 Let $\gamma \in \longanc{2}{\eta}$. Note that, to reconstruct $\gamma$ from $\eta$, it is enough to reconstruct the rectangle $R \subset [1:L]^{2}$ that satisfies $\gamma = \gamma_{\sigma,R}$ for some $\sigma \in \Omega_{[1:L]^{2}}$ (note that knowledge of $R$ and $\eta$ allows you to reconstruct $\sigma$ uniquely). Let $r^{(\mathrm{init})}$, $r^{(\mathrm{fin})}$ be, respectively, the first and last elements of $R$ that are flipped while traversing $\gamma$. Since only two defects are removed over the course of $\gamma$, we must have that $r^{(\mathrm{fin})} \in D(\eta)$ and $r^{(\mathrm{init})} \notin D(\eta)$. Thus, there are at most $(L^{2} - |D(\eta)|)$ choices for  $r^{(\mathrm{init})}$ and at most $|D(\eta)|$ choices for $r^{(\mathrm{fin})}$. Since the choice of two opposite corners of $R$ determines $R$ completely, this completes the proof of the first line of Inequality \eqref{IneqAncestryLongPathMainBound}. The proof of the second line is essentially identical.
\end{proof}

Having bounded the congestion of the path types, we bound the path lengths.

\begin{lemma} [Path Length] \label{LemmaPathLengthBound}
Fix $\sigma \in \Omega_{[1:L]^{2}}$ and let $\gamma \in \Gamma_{\sigma,+}$ satisfy $\flowfull{\sigma}(\gamma) > 0$. Then the path length is bounded by
\be
|\gamma| \leq L^{2} \min \left( \cnaive L + 1, \frac{|D(\sigma)|}{2} \right).
\ee
\end{lemma}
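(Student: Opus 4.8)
The plan is to decompose the full random path $\gamma$ (with $\flowfull{\sigma}(\gamma)>0$) into its constituent sub-blocks according to Definition \ref{DefFinalPath}, bound the number of such blocks, and bound the length of each block separately. First I would recall that $\gamma = \gamma_{1} \conc \gamma_{2} \conc \cdots \conc \gamma_{N}$ for some $N \in \mathbb{N}$, where for $i \leq \lfloor \cnaive L\rfloor$ each $\gamma_{i}$ is a rectangle-removal path $\gamma_{\fin{\gamma_{i-1}},R}$ with $R \in F(\fin{\gamma_{i-1}},j)$ for some $j$, and for $i > \lfloor \cnaive L \rfloor$ we have $\gamma_{i} \sim \flowna{\fin{\gamma_{i-1}}}$. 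The key elementary observation is that each rectangle-removal block strictly decreases the defect count by $2$ or $4$ (by Proposition \ref{LemmaTypicalReconstruction}, a rectangle in $F(\cdot,i)$ has at least three corners occupied by defects, so traversing $\gamma_{\sigma,R}$ removes two or four defects; this is also visible from Figure \ref{FigRectRemoveDiff}), hence in particular $|D(\fin{\gamma_{i}})| \leq |D(\init{\gamma_{i}})| - 2$. Likewise the naive path, once invoked, runs to $+$ in a single block.

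Next I would bound $N$. Two cases: either the loop never reaches index $i > \cnaive L$, in which case all blocks are rectangle-removal blocks and $N \leq |D(\sigma)|/2$ since each removes at least two defects and the process terminates at $D(\fin\gamma)=\emptyset$; or the naive path is invoked, in which case there are at most $\lfloor \cnaive L\rfloor$ rectangle-removal blocks followed by a single naive block, so $N \leq \cnaive L + 1$. In both cases $N \leq \min(\cnaive L + 1, |D(\sigma)|/2)$.

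Finally I would bound the length of each individual block by $L^{2}$. A rectangle-removal path $\gamma_{\sigma,R}$ flips every spin of $\cB_{-}(R)$ exactly once (Definition \ref{DefRectRemovePath}), and $|\cB_{-}(R)| \leq |\cB_{-}([1:L]^{2})| = L^{2}$, so such a block has length at most $L^{2}$. A naive path $\flowna{\sigma}$ flips each site of $[1:L]^{2}$ carrying a $-1$ spin exactly once, in lexicographic order, so its length is at most $L^{2}$ as well. Multiplying the per-block bound by the bound on $N$ gives $|\gamma| \leq L^{2}\min(\cnaive L + 1, |D(\sigma)|/2)$, as claimed. The only mildly delicate point — the "main obstacle," though it is minor — is making the block decomposition genuinely unique and checking that the interface between the rectangle-removal regime and the naive regime behaves as stated; this is handled by part \textbf{(3)} of Proposition \ref{LemmaTypicalReconstruction} ($F(\sigma,i)\neq\emptyset$ for all relevant $i$, so the sampling loop in Definition \ref{DefFinalPath} is well-defined and each block is a genuine rectangle-removal path until the switch-over), together with the observation that the switch-over index is deterministic given the sequence of defect counts.
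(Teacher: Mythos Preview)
Your proposal is correct and follows essentially the same argument as the paper: decompose $\gamma$ into its sub-blocks from Definition~\ref{DefFinalPath}, bound the number of blocks by $\min(\cnaive L+1,\,|D(\sigma)|/2)$ using the defect-decrease property of rectangle-removal paths, and bound each block length by $L^2$. If anything, you are slightly more explicit than the paper in justifying the $L^2$ bound for the naive block and in invoking part~\textbf{(3)} of Proposition~\ref{LemmaTypicalReconstruction} for well-definedness.
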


\begin{proof}
Recall that the (non-truncated) paths constructed in Definition \ref{DefFinalPath} are given in terms of a decomposition of each path $\gamma$ into subpaths: 
\be
\gamma = \gamma_{1} \conc \ldots \conc \gamma_{m}
\ee
for some $m$. Note that for all $i < m$, we have $\gamma_{i}$ is of the form $\gamma_{\gamma_{i-1}^{(\mathrm{fin})}, R_{i}}$ for some rectangle  $R_{i}$ (with the convention $\gamma_{0}^{(\mathrm{fin})} = \sigma$). Recalling that $|D(\fin{\gamma_{i}})| \leq |D(\init{\gamma_{i}})| - 2$, this implies  $m \leq \frac{|D(\sigma)|}{2}$. Recalling from Definition \ref{DefFinalPath} that we start a naive path of the form in Definition \ref{DefNaivePath} after at most $\cnaive L$ rectangle-removal paths of the form Definition \ref{DefRectRemovePath}, this implies $m \leq \min( \cnaive L + 1, \frac{|D(\sigma)|}{2})$.

Recalling that each path component $\gamma_{i}$ consists of flipping all the spins in a single rectangle $R_{i} \subset [1:L]^{2}$ at most once, we have $|\gamma_{i}| \leq |R_{i}| \leq |[1:L]^{2}| = L^{2}$. Thus,
\be
|\gamma| \leq \sum_{i=1}^{m} |\gamma_{i}| \leq m L^{2} \leq  L^{2} \min( \cnaive L + 1, \frac{|D(\sigma)|}{2}).
\ee

\end{proof}

\subsection{Final Bounds on Canonical Paths} \label{SecFinalBoundCanonical}

In this section, we put together our main bounds on the canonical paths studied in this paper. We let $\L = [1:L]^{2}$. The calculation is somewhat lengthy, so we remind the reader of some conventions that are used throughout:

\begin{enumerate}
\item In this section only, we fix the constant $M =  \lfloor \cnaive L \rfloor $ + 1, one more than the number of path components taken before using the ``naive" path, i.e. an upper bound on the total number of sub-paths used to construct each path to the ground state.
\item All paths that have positive measure under Definition \ref{DefFinalPath} will consist of a sequence of paths
\be \label{EqMainLemmaShortPathRepInt}
\gamma_{1} \circ \ldots \circ \gamma_{k},
\ee
where each element $\gamma_{i}$ has positive measure under Definition \ref{DefPartialPath}, possibly followed by a path of the form given in Definition \ref{DefNaivePath}. We will always write our paths in terms of this decomposition. When a single path has several decompositions of this form, we sum over all of these representations.

\item The measures  $\{\flowtot{\sigma}\}$ are as in Definition \ref{DefPartialPath}, the measures  $\{\flowfull{\sigma}\}$, $\{\flowtrunc{\sigma}{k}\}$ are as in Definition \ref{DefFinalPath}, and the measures $\{ \flowna{\sigma} \}$ are as in Definition \ref{DefNaivePath}. We note that $\flowna{\sigma}$ assigns full mass to a single path, which we denote $\gamma_{\sigma,\mathrm{na}}$.
\item For $\s \in \O$ and $e = (e_{-},e_{+})\in \edges$, we define the energy associated with a path started at $\s$ on edge $e$ as $Q(\s,e) = \pi(\s)/\big(\pi(e_{-}) \cL(e_{-},e_{+})\big)$.
\end{enumerate}

We fix an edge $e \in \edges$ and calculate a bound on the sum that appears in Lemma \ref{LemmaMultiSpec}, with sets $S = S_{k}$ of the form \eqref{DefHighDensitySet} for some $k \in [0\!:\!L^{2}]$. Although in principle we could take advantage of the fact that paths from $S_{k}$ that are truncated when they enter $S_{k}^{c}$ are shorter than paths which go all the way from $S_{k}$ to $\{+\}$, we will not do so. However, we \textit{will} use the fact that the initial state of any path from $S_{k}$ to $S_{k}^{c}$ will have at least $k$ defects. 

Recalling $M \lesssim \beta L$, we have:

\be 
\quad \sum_{\sigma \in S_{k}} \sum_{\gamma \ni e} \flowtrunc{\sigma}{k}(\gamma) | \gamma|  Q(\s,e) 
&\stackrel{Lemma \,\, \ref{LemmaPathLengthBound}}{\lesssim} \sum_{\sigma \in S_{k}} \sum_{\gamma \ni e} \flowtrunc{\sigma}{k}(\gamma) L^{2}\,\min(|D(\sigma)|, \cnaive L)Q(\sigma,e) \\
&\stackrel{Eq. \eqref{EqMainLemmaShortPathRepInt}}{=}  \sum_{m = 1}^{M} \sum_{j=1}^{m} \sum_{\substack{\gamma = \gamma_{1} \conc \ldots \conc \gamma_{m}\\ \init{\gamma}\in S_k \,,\gamma_{j} \ni e}} \flowtrunc{\init{\gamma}}{k}(\gamma) L^{2}\,\min(|D(\init{\gamma})|,\cnaive L) Q(\init{\gamma},e) \\
&= L^2  \sum_{m = 1}^{M} \sum_{j=1}^{\min(m,M-1)} \sum_{\substack{\gamma = \gamma_{1} \conc \ldots \conc \gamma_{m}\\ \init{\gamma}\in S_k \,,\gamma_{j} \ni e}} \flowtrunc{\init{\gamma}}{k}(\gamma)\,\min(|D(\init{\gamma})|, \cnaive L) Q(\init{\gamma},e)  \\
&+ L^2\sum_{\substack{\gamma = \gamma_{1} \conc \ldots \conc \gamma_{M} \\ \init{\gamma} \in S_k\,, \gamma_{M} \ni e} }\flowtrunc{\init{\gamma}}{k}(\gamma) \min(|D(\init{\gamma})|, \cnaive L)Q(\init{\gamma},e) \\
&\equiv L^{2} S_{\mathrm{init}} +  L^{2} S_{\mathrm{naive}}.\label{IneqCanonicalPathBreakup}
\ee
We bound these last two terms separately, starting with $S_{\mathrm{init}}$. As the subscripts in the following sums will become somewhat complicated, we introduce some short-term notation to reduce the visual clutter. In the following definitions, we always think of $\eta$ as the configuration in $\gamma$ that appears ``just before" the part of the path containing the edge $e$, and $2u$ as the number of defects removed in going from $\init{\g}$ to $\eta$ in excess of the minimal number. 
\be
\Gamma^{(i)}(m) &= \{ \gamma_{1} \conc \ldots \conc \gamma_{m} \in \Gamma \, : \, \, \gamma_{i} \ni e, \, \init{\gamma_{1}} \in S_{k} \}\quad \textrm{for}\ i\leq m\,, \\
\Gamma(m)&=\Gamma^{(m)}(m) = \{ \gamma_{1} \conc \ldots \conc \gamma_{m} \in \Gamma \, : \, \, \gamma_{m} \ni e, \ \init{\gamma_{1}},\init{\gamma_m} \in S_{k} \}, \\
\Gamma(m,u) &= \{ \gamma_{1} \conc \ldots \conc \gamma_{m} \in \Gamma(m) \, :  |D(\init{\gamma})| = |D(\init{\gamma_{m}})|+ 2(m-1+u) \}, \\
\Delta(\eta) &= \{ \gamma \in \Gamma_{\eta} \, : \, \gamma \ni e \}, \\
h(m) &= \min(|D(e_{-})| + 4m, \, \b L).
\ee
In this notation, we have omitted the dependence on many variables (e.g. the edge $e$ and starting level $k$) that do not change in the calculation. We compute:

\be \label{IneqSInitStart}
S_{\mathrm{init}} &=   \sum_{m = 1}^{M} \sum_{i=1}^{\min(m,M-1)} \sum_{\gamma \in \Gamma^{(i)}(m)} \flowfull{\init{\gamma}}^{(k)}(\gamma) \, \min(|D(\init{\gamma})|, \b L)\,  Q(\init{\gamma},e) \\
&=\sum_{m=1}^{M-1} \sum_{\gamma \in \Gamma(m)} \left( \prod_{j=1}^{m} \flowtot{\init{\gamma_{j}}}(\gamma_{j}) \right) \, \min(|D(\init{\gamma})|, \b L)\, Q(\init{\gamma},e)  \,,
\ee
where in the second line we took the marginal distribution of the first $i$ segments of the path $\gamma$ that is being summed over (integrating out the remaining segments from index $i+1$ to $m$, which are not relevant to the other terms in the sum). Denote the inner sum by:
\be
\label{eq:Sme}
S(m,e) &\equiv   \sum_{\gamma \in \Gamma(m)} \left( \prod_{j=1}^{m} \flowtot{\init{\gamma_{j}}}(\gamma_{j}) \right) \, \min(|D(\init{\gamma})|, \b L)\, Q(\init{\gamma},e)  \,.
\ee

We now decompose $S(m,e)$ over the possible values of $u$ and  factor out the term  which is due to the final part of the path, and observe that $\min(|D(\init{\gamma})|,\b L) \leq h(m)$, to get
\begin{align}	
  S(m,e) &=   \sum_{u=0}^{m-1} \sum_{\gamma \in \Gamma(m,u)} \left( \prod_{j=1}^{m} \flowtot{\init{\gamma_{j}}}(\gamma_{j}) \right) \,\min(|D(\init{\gamma})|,\b  L)\, Q(\init{\gamma_{m}},e) e^{-2(m+u-1)\beta}\nonumber\\
&\leq \sum_{u=0}^{m-1} \sum_{\eta \in S_k } \sum_{\gamma \in \Delta(\eta)} \, \flowtot{\eta}(\gamma) h(m) Q(\eta,e) e^{-2(m-1+u)\beta} \cdot\sum_{\gamma \in \Phi(m,u, \eta)} \left( \prod_{i=1}^{m-1} \flowtot{\init{\gamma_{i}}}(\gamma_{i}) \right)\,,\label{ineq:Sme}
\end{align}
where
\be
\Phi(m,u,\eta) &= \{  \gamma_{1} \conc \ldots \conc \gamma_{m-1} \, : \, \flowtot{\init{\gamma_{i}}}(\gamma_{i}) >0 \, \,  \forall \, i \in [1:(m-1)], \, \init{\gamma_{1}} \in S_{k}, \, \fin{\gamma_{m-1}} = \eta, \\
& \qquad \qquad |D(\init{\gamma})| = |D(\eta)| + 2(m-1+u) \}\,.
\ee
To continue, we bound the inner part of the sum, $\sum_{\gamma \in \Phi(m,u, \eta)} \left( \prod_{i=1}^{m-1} \flowtot{\init{\gamma_{i}}}(\gamma_{i}) \right)$, in two cases:

\begin{enumerate}
	\item \textbf{$|D(\h)| \leq \cultra L^{2}$.} In this case we begin with the trivial bound $\prod_{i=1}^{m-1} \flowtot{\init{\gamma_{i}}}(\gamma_{i}) \leq 1$. Thus, it is enough to bound the size of the set $\Phi(m,u,\h)$. Let $\mathbf{Bin}(u) = \{ s \in \{0,1\}^{m-1} \, : \, \sum_{i=1}^{m-1} s_{i} = u\}$; for $s \in \mathbf{Bin}(u)$ define
	\be
	\Phi_s(m,u,\eta) = \{ \gamma_{1} \conc \ldots \conc \gamma_{m-1} \in \Phi(m,u,\eta) \, : \, \forall \, i \in [1:(M-1)], \ |D(\init{\gamma_{i}})| = |D(\fin{\gamma_{i}})| + 2(1 + s_{i})\}.
	\ee
	Since each component $\gamma_{i}$ of a path in $\Phi(m,u,\eta)$ satisfies $|D(\init{\gamma_{i}})| - |D(\fin{\gamma_{i}})| \in \{2,4\}$, the sets $\{\Phi(u,\eta,s)\}_{s \in \mathbf{Bin}(u)}$ in fact partitions $\Phi(u,\eta)$:
	\be \label{EqGammaUEtaPart}
	\Phi(m,u,\eta) = \sqcup_{s \in \mathbf{Bin}(u)} \Phi_s(m,u,\eta).
	\ee
	
	Since $m \leq \b L$ and $|D(\h)| \leq \cultra L^{2}$ we have $|D(\fin{\gamma_i})|\lesssim \beta^{-2}L^2$ for each $i \in [1:m-1]$. So applying the first bounds in  Lemma \ref{LemmaAncestryLongPathBound} we have
	\be \label{EqGammaUEtaSFin}
	| \Phi_s(m,u,\eta)| \lesssim L^{4(m-1)} \beta^{-m +1 + u}
	\ee
	for all $s \in \mathbf{Bin}(u)$. Combining these bounds,
	\be
	\sum_{\g\in \Phi(m,u,\eta) }  \left( \prod_{i=1}^{M-1} \flowtot{\init{\gamma_{i}}}(\gamma_{i}) \right) &\leq |\Phi(m,u,\eta)| \stackrel{\eqref{EqGammaUEtaPart}}{\leq} \sum_{s \in \mathbf{Bin}(u)} | \Phi_s(m,u,\eta) | \stackrel{\eqref{EqGammaUEtaSFin}}{\lesssim} L^{4(m-1)} \beta^{-m + 1+ u} \binom{m-1}{u}.
	\ee
	
	\item \textbf{$|D(\h)| > \cultra L^{2}$.} The bound in this case is essentially the same. Since $|D(\h)| > \cultra L^{2}$ is large, we can improve the trivial bound $\prod_{i=1}^{M-1} \flowtot{\init{\gamma_{i}}}(\gamma_{i}) \leq 1$, by applying \eqref{IneqMainCombManyOptions}, to the (still very weak) bound $\prod_{i=1}^{m-1} \flowtot{\init{\gamma_{i}}}(\gamma_{i}) \leq \beta^{-m+1}$ (in fact the upper bound is closer to $\left(L/\beta \right)^{-m+1}$ but we don't use this additional strength).  On the other hand, since $|D(\h)|$ is large, applying Lemma \ref{LemmaAncestryLongPathBound} gives only the bound
	\be \label{EqGammaUEtaSFinNew}
	| \Phi(u,\eta,s)| \leq L^{4(m-1)},
	\ee
	so our upper bound is larger by a factor of $\beta^{m - (u+1 )}$ relative to Inequality \eqref{EqGammaUEtaSFin}.
	
	Since we have improved by $\beta^{-m+1}$ and worsened by a smaller factor of $\beta^{m - (u+1)}$, the conclusion
	\be \label{IneqNaiveBunchOfCasesUltraConc}
	\sum_{\g\in \Phi(m,u,\eta) }  \left( \prod_{i=1}^{m-1} \flowtot{\init{\gamma_{i}}}(\gamma_{i}) \right)\leq  L^{4(m-1)} \beta^{-m + 1+ u}  \binom{m-1}{u}
	\ee
	holds in this case as well.
\end{enumerate}

Thus, Inequality \eqref{IneqNaiveBunchOfCasesUltraConc} holds in both cases. Substituting Inequality \eqref{IneqNaiveBunchOfCasesUltraConc} into \eqref{ineq:Sme} and continuing:
\be \label{IneqSInitMid}
S(m,e) &\lesssim  \sum_{u=0}^{m-1}  \binom{m-1}{u} \beta^{-(m-1-u)} L^{4(m-1)} \sum_{\eta \in S_k } \sum_{\gamma \in \Delta(\eta)} \, \flowtot{\eta}(\gamma) h(m) Q(\eta,e) e^{-2(m-1+u)\beta} \\
&=  \beta^{-(m-1)}  \sum_{\eta \in S_k} \sum_{\gamma \in \Delta(\eta)}  \flowtot{\eta}(\gamma) h(m) Q(\eta,e) \, \sum_{u=0}^{m-1} \binom{m-1}{u}  (\beta e^{-2\beta})^{u}  \\
&\lesssim   \beta^{-(m-1)}h(m)  \sum_{\eta \in S_k\cap G(e)} \sum_{\gamma \in \Delta(\eta)} \, \flowtot{\eta}(\gamma) h(m) Q(\eta,e) \,,
\ee
where on the second line we used $L^{4(m-1)} e^{-2(m-1)\beta} \lesssim 1$.

Next, we apply the congestion and energy bounds in Proposition \ref{prop:PTmain}.
We note there are effectively two cases: $\h \in \cup_{\theta \in \Theta} G^{\mathrm{(init)}}_\theta(e) \cup G^{\mathrm{(mid)}}_\theta(e)$ or $\h \in \cup_{\theta \in \Theta} G^{\mathrm{(fin)}}_\theta(e)$. 
In the first case, the congestion bound is slightly stronger while the energy bound is slightly weaker; in the second case, the congestion bound is slightly weaker while the energy bound is much stronger. Thus, in both cases, we continue from Inequality \eqref{IneqSInitMid} to find:

\be \label{IneqSMFinal}
S(m,e) \lesssim |\Theta| \, \beta^{4}  \, e^{2.5 \beta}  \beta^{-(m-1)} \frac{ h(m) }{|D(e_{-})|}.
\ee
Combining Inequalities \eqref{IneqSInitStart} and \eqref{IneqSMFinal}, and noting that $h(m) \leq \min(|D(e_-)|,\b L) + 4m$,

\be \label{IneqSInitFinal}
S_{\mathrm{init}} &\lesssim \sum_{m=1}^{M-1} S(m,e) \lesssim \beta^{6} \, e^{2.5 \beta} \min(1, \frac{\b L}{|D(e_{-})|}).
\ee


We now bound the term $S_{\mathrm{naive}}$ by a similar factorisation as we did for $S_{\textrm{init}}$. We replace the previous simplifying notation with the following,
\be \label{EqCanonicalPathSimplifyingNaive}
\G_{\textrm{init}}(\eta) &= \{ \gamma_{1} \conc \ldots \conc \gamma_{M-1} \in \Gamma \, : \, \init{\gamma_{1}}\in S_k, \ \fin{\gamma_{M-1}} = \eta\}, \\
\Omega_{\textrm{na}} &= \{ \eta \in S_k \, : \, \gamma_{\eta,\mathrm{na}} \ni e \}, \\
\Omega(v) &= \{ \eta \in S_k \, : \, |D(\eta)|  = |D(e_{-})| + 2v, \, \gamma_{\eta,\mathrm{na}} \ni e \}\,.
\ee
In the following we now think of $\eta$ as the configuration in $\gamma$ that appears at the beginning of the naive part of the path (which contains $e$), and $2v$ as the number of defects removed $\h$ to $e_-$, which could be negative.
\be \label{IneqSNaiveBoundBeforeCases}
S_{\mathrm{naive}} &=  \sum_{\substack{\gamma = \gamma_{1} \conc \ldots \conc \gamma_{M} \\ \init{\gamma} \in S_k\,, \gamma_{M} \ni e}} \flowtrunc{\init{\gamma}}{k}(\gamma) \, \min(|D(\init{\gamma})|, \b L) Q(\init{\gamma},e) \\
&= \sum_{\h\in\O_{\textrm{na}}}\sum_{\gamma \in \G_{\textrm{init}}(\h)}\left( \prod_{i=1}^{M-1} \flowtot{\init{\gamma_{i}}}(\gamma_{i}) \right)  \, \min(|D(\init{\gamma})|, \b L) Q(\init{\gamma},e)
\ee
Now, defining $\G_{\textrm{init}}(\eta,x )=  \{ \gamma_{1} \conc \ldots \conc \gamma_{m-1} \in \Gamma \, : \, \init{\gamma_{1}}\in S_k, \ \gamma_{m-1} \ni (\h^x,\eta)\}$, we observe that $\G_{\textrm{init}}(\eta) \subset \cup_{x\in\L}\G_{\textrm{init}}(\eta,x )$.
Furthermore, using reversibility and that the rates are bounded by $1$, we have  \[Q(\init{\gamma},e)/Q(\init{\gamma},(\h^x,\h)) \leq Q(\h,e)\,,\] it follows that 
\begin{align*}
  S_{\mathrm{naive}} &\leq \sum_{\h\in\O_{\textrm{na}}} Q(\h,e)\sum_{x\in\L}\underbrace{\sum_{\gamma \in \G_{\textrm{init}}(\h,x)} \left( \prod_{i=1}^{M-1} \flowtot{\init{\gamma_{i}}}(\gamma_{i}) \right) \, \min(|D(\init{\gamma})|, \b L) Q\big(\init{\gamma},(\h^x,\h)\big)}\,,
\end{align*}
where the under-braced term is exactly $S\big(M,(\h^x,\h)\big)$ defined in Eq. \eqref{eq:Sme}. Applying Ineq. \eqref{IneqSMFinal} and summing over the possible number of defects removed in going from $\h$ to $e_-$,
\begin{align*}
  S_{\mathrm{naive}} &\lesssim \b^6e^{8\b}\b^{-(M-1)}\sum_{v\geq -L-2}\sum_{\h\in\O(v)}e^{-2v\b}\,,
\end{align*}
where  we applied the bounds $h(m)/|D(e_-)| \leq \b L$ and $\cL(e_-,e_+) \geq e^{-4\b}$.
Finally, applying Lemma \ref{LemmaNumAncFullNaivePath}
\begin{align}
\label{IneqSNaiveFinal}
   S_{\mathrm{naive}} &\lesssim \b^6e^{8\b}\b^{-(M-1)}\sum_{2v \geq -2L}  2^{2L} \sum_{k=(2v-2L)^+ }^{2v+2L} \binom{L^{2}}{k} e^{-2v \beta} \nonumber\\
&\leq \b^6e^{8\b}\b^{-(M-1)}\sum_{2v \geq -2L}  2^{2L} (4L+1)\frac{L^{2(2v+2L)}}{(2v-2L)^+!}e^{-2v\b}\nonumber\\
&\lesssim \b^6e^{8.5\b}\b^{-(M-1)} 2^{2L} e^{2L\b}\sum_{n\geq 0} \frac{(L^{2}e^{-\b})^n}{(n-4L)^+!}\nonumber\\
&\stackrel{L^2e^{-\b} \approx 1}{\lesssim}  \b^6e^{9\b}\b^{-(M-1)} 2^{2L}e^{2L\b}\,.
\end{align}
Since $M =\Theta(\cnaive L) $ and $L =\Theta(e^{0.5 \beta})$, we observe that the term $L^{2}  S_{\mathrm{naive}}$ is negligible compared to $L^2 S_{\mathrm{init}}$.

Summarizing the calculations in this section, we have by Inequalities \eqref{IneqCanonicalPathBreakup}, \eqref{IneqSNaiveFinal} and \eqref{IneqSInitFinal}

\be \label{IneqMainCanonicalPathConclusion}
\quad & \sum_{\sigma \in S_k} \sum_{\gamma \ni e} \flowtrunc{\sigma}{k}(\gamma) | \gamma| \frac{\pi(\sigma)}{\pi(e_{-}) \cL(e_{-},e_{+})} \lesssim L^{2}(S_{\mathrm{init}} + S_{\mathrm{naive}}) \\
&\lesssim L^{2} \big( \beta^{6} \, e^{2.5 \beta} \min(1, \frac{\b L}{|D(e_{-})|}) +  L^{4M+1} e^{(9 -2M) \beta} 2^{2L} \beta^{-M} \big) \\
&\lesssim \beta^{6} \, e^{3.5 \beta} \min(1, \frac{\b L}{|D(e_{-})|})\,, 
\ee
for any edge $e$ with $e_- \in S_k$. 

\section{Analysis of All-Plus Boundary Condition: Proof of Theorem \ref{ThmMainResPlus}}  \label{SecAllPlusRes}

\subsection{Upper Bounds}

Applying Lemma \ref{LemmaMultiSpec} with $S = \{ \sigma \in \config \, : \, |D(\sigma)| = 0 \}$ and the usual variational characterization of the spectral gap given in Equation \eqref{eq:gap},
\be
 T_{\rm rel}^+( L_c ) \leq \mathcal{A},
\ee
\noindent
where $\mathcal{A}$ is defined as in Equation \eqref{IneqDefCanPathObject}. By Inequality \eqref{IneqMainCanonicalPathConclusion},
\be
\mathcal{A} \lesssim \beta^{6} \, e^{3.5 \beta}.
\ee
Combining these two bounds completes the proof of the upper bound in Inequality \eqref{IneqPlusRel}.

Next, we prove the upper bound on the mixing time. Define the function $k \, : \, [0,1] \mapsto \mathbb{N}$ as in Equality \eqref{DefHighDensityRound}, and for $r>0$ let $S_{k(r)}$ be as in Equation \eqref{DefHighDensitySet}. Applying Inequality \eqref{IneqSpecProfileSubsets}, we have by Lemma \ref{LemmaMultiSpec} and the bound on $\mathcal{A}$ given in Inequality \eqref{IneqMainCanonicalPathConclusion} that
\begin{align} \label{IneqMainSpecProfConclusion}
\lambda^{-1}(S_{k(r)}) &\lesssim \beta^{6} \, e^{3.5 \beta}, \qquad e^{- \beta L } < r\,,  \\
\lambda^{-1}(S_{k(r)}) &\lesssim \beta^{8} \, e^{4 \beta} \frac{-1}{\log(r)}, \qquad 0 < r < e^{-  \beta L },\nonumber
\end{align}
where we recall from Lemma \ref{lem:dom} that $\pi(+) \approx 1$, and $|D(e_{-})| {\geq} k(r)$ for any edge $e$ with $e_- \in S_{k(r)}$. Applying Inequality \eqref{IneqMainSpectralProfileBound} with the bound from Inequality \eqref{IneqMainSpecProfConclusion}, we conclude
\be
T_{\rm mix}^+( L_c ) &\stackrel{\text{Ineq } \eqref{IneqMainSpectralProfileBound}}{\leq} \int_{4 \, \min_{\sigma} \pi(\sigma)}^{16} \frac{2}{x \, \Lambda_{\cL_{\L}^{+}}(x)} dx \\
&\stackrel{\text{Ineqs } \eqref{IneqSetContainmentSpectralProfile}, \eqref{IneqMainSpecProfConclusion} }{\lesssim} 2 \beta^{8} \, e^{4 \beta} \int_{4  e^{-L^{2} \beta}}^{e^{-L \beta}} \frac{1}{x \log(1/x)} dx + 2 \beta^{6} e^{3.5 \beta} \, \int_{e^{-L \beta}}^{16} \frac{1}{x} dx \lesssim \beta^{9} \, e^{4 \beta},
\ee
completing the proof of the upper bound in Inequality \eqref{IneqPlusMix}.

\subsection{Lower Bound on Relaxation Time}
\label{sec:lwrbnd}

We give a lower bound on the relaxation time by choosing an explicit test function that relaxes to equilibrium quite slowly, and explicitly bounding the two terms in the ratio \eqref{eq:gap}. We choose a test function motivated by the heuristic that the relaxation time is dominated by the time it takes for $4$ initially well seperated defects, at the corners of a rectangle, to annihilate. As illustrated in Figure \ref{FigCornerWalk}, these opposite corners  perform nearly-independent symmetric random walk until they are in adjacent columns or rows, and so our test function will be quite similar to the optimal test function for symmetric random walk in $[0:L_{c}]^{2}$.

Throughout the proof, to reduce notation we use $\pi = \pi^+_\L$, $c = c^{+}_\L$, $p = p^+$, $L = L_c$ and $\L = [1:L]^{2}$.
Let $f : \O_\L^+ \to \bbR_+$ be given by
\begin{align}
  f(\s) = g\left( \frac{|\s|}{L^2}\right)\,,
\end{align}
where $g:[0,1] \to [0,1]$ is defined for $x \in [0,1/2]$ by
\begin{align}
  \label{eq:g}
 &\begin{cases}
    g(x) = 0 & \textrm{if } x \in [0,1/4]\,, \\
    g(x) = 12x - 3 & \textrm{if } x \in (1/4,1/3) \\
    g(x) = 1 & \textrm{if } x \in [1/3,1/2]\,,
  \end{cases}
\end{align}
and $g(x) = g(1-x)$ for $x \in [1/2, 1]$.

To estimate $\var_\L^+(f)$ from below, it will turn out to be enough to compare the $+$ configuration  to configurations consisting of
exactly $4$ defects at the corners of a rectangle (i.e. configurations with a single rectangular region of $-1$ spins, and all $+1$ spins outside).
Informally, there are $\Theta(L^4)=\Theta(e^{2\beta})$ rectangles of area $\Omega(L^{2})$,
and by Lemma \ref{lem:dom} and Equality \eqref{eq:simple} the probability of any particular such configuration is $\Theta(e^{-4\b})$.
It will follow from these estimates on the number and probability of the ``large-rectangle" configurations that the $ \var_\L^+(f) \gtrsim e^{-2 \beta}$, as shown below.

More formally, let $\mathbf{B}$ be the collection of rectangles $R \subset [1:L]^{2}$ with areas $\frac{L^{2}}{3} \leq |R| \leq \frac{2 L^{2}}{3}$, and define
\be
\cC = \{ \sigma \in \O^+_\L \,:\, \exists R \in \mathbf{B} \, \text{ s.t. } \sigma_{x} \equiv 1 - 2 \1_{x \in R} \}
\ee
to be the rectangles with spin $(-1)$ exactly on a rectangle in $\mathbf{B}$.

Observe that $| \cC | \geq c L^4$ for some $c>0$. Since $f(\s) \geq 1$ for all $\s \in \cC$ and $f(+) = 0$, our bound on $|\cC| \geq c L^{4}$ gives the following bound on $ \var_\L^+(f)$:
\begin{align}
  \var_\L^+(f) &= \frac{1}{2} \sum_{\s,\h\in \O_\L}\pi(\s)\pi(\h)\left(f(\s) - f(\h)\right)^2\nonumber\\
  &\geq \frac{1}{2}\pi(+)\sum_{\s \in \cC}  \pi(\s)f^2(\s) \geq \frac{1}{2}\pi(+)^2\sum_{\s \in \cC}  e^{-4\b}\nonumber \gtrsim e^{-2\b}.
\end{align}

To estimate the Dirichlet form from above, we consider  various cases defined by the number of defects present in the initial and final configuration. Let
\begin{align*}
  \cA_n &= \{\h\in\config\,:\, |D(\h)| = n \}\,,\\
  \cB_n &= \cA_n\cup \cA_{n-2}\cup \cA_{n-4}\,,
\end{align*}
where $\cA_n$ is non-empty only for $n=0$ or $n\geq 4$ and $n$ even.
By reversibility, we may consider transitions which only remove defects (reducing the number by $0$, $2$ or $4$), or leave the number of defects the same, so
\begin{align}
  \label{eq:dupbnd}
  \cD_\L^+(f) &\leq  \sum_{\s\in\O} \sum_{\substack{x \in \config \, :\\ |D(\s)| \geq |D(\s^x)|}}\pi(\s)c(x,\s)\left(\nabla_x f(\s)\right)^2\nonumber \\
&= \sum_{k=2}^{L^2/2}\sum_{\s\in\cA_{2k}} \sum_{\substack{x\ :\\ \s^x \in \cB_{2k}}}\pi(\s)c(x,\s)\left(\nabla_x f(\s)\right)^2\,.
\end{align}
It remains to bound this sum from above.  We divide the contributions to the sum in \eqref{eq:dupbnd} into $5$ cases. In the Cases $1,4$ and $5$ we will use the following general bound on $\nabla_xf(\s)$. For all $\s \in \config$, we have \footnote{Note that $g'(x)$ does not exist at some isolated points $x$. To ensure that the following calculation still holds, we define $g'(x) = \lim_{y \uparrow x} g'(y)$ at these points. Note that essentially any sensible choice for these isolated points will not change the final estimate.}
\begin{align*}
  \left(\nabla_x g(\s)\right)^2 &= \left(g\left(\frac{|\s|}{L^2}\right) - g\left(\frac{|\s|\pm 1}{L^2}\right)\right)^2  \\
&= g'\left(\frac{|\s|}{L^2}\right)^{2} \,\frac{1}{L^4} + o\left(\frac{1}{L^4}\right)\lesssim e^{-2\b}.  \\
\end{align*}

\noindent\textbf{Case 1 ($\s \in \union_{k=4}^{L^2/2}\cA_{2k}$):} We will show that the contribution to the sum in \eqref{eq:dupbnd} due to configurations $\s$ with $8$ or more defects are negligible.
For $\s\in\cA_{2k}$ we have $\pi(\s) = \pi(+)e^{-2k\b}$, also $c(x,\s) \leq 1$ and $|\{x \in \L \,:\,  \s^{x} \in \cB_{2k} \}| \leq 8k$. Combining these bounds with Lemma \ref{LemmaExtensionWoo}, it follows that
\be
  \sum_{k=4}^{\infty}\sum_{\s\in\cA_{2k}} \sum_{\substack{x\ :\\ \s^x \in \cB_{2k}}}\pi(\s)c(x,\s) &\left(\nabla_x f(\s)\right)^2 \lesssim \left( L^{8}e^{-8\beta} + L^{12}e^{-12\b} +\sum_{k=8}^{\infty}k L^{3k}e^{-2k\b}\right) \, e^{-2\b} \\
&\lesssim  \left(e^{-4\b} + e^{-4\b} \left(8-7e^{-\b/2}\right)\right) \, e^{-2\b}\lesssim  e^{-6\b} \,,
\ee
where we use the bound $| \cA_{2k}| \leq (ek)^{2k}L^{2k}$ from Lemma \ref{LemmaExtensionWoo} to bound the two terms corresponding to $k=4$ and $k=6$, and we use the bound $ | \cA_{2k} | \leq L^{3k}$ from the same lemma to bound the remaining terms with $k \geq 8$.

\noindent\textbf{Case 2 ($\s \in \cA_4$ and $\s^x \in \cA_0$):}
If a single spin flip at $x$ removes all defects, then $\s_x=-1$ and $\s_y=1$ for $y\in\L\setminus\{x\}$.
Therefore $\left(\nabla_xf(\s)\right)^2 \leq \left(g(1/L^2)\right)^2 = 0$ for $L>2$  by \eqref{eq:g}.
It follows that there is no contribution to the sum in \eqref{eq:dupbnd} from this case.\\
\noindent\textbf{Case 3 ($\s \in \cA_4$ and $\s^x \in \cA_4$):}
By a similar argument as in Case 2, these transitions do not contribute to the sum for $L>4$.  Observe that, if $\s$ and $\s^{x}$ have the same number of defects, this means that there must be exactly two defects in the plaquette associated with $x$ - that is, $|D(\sigma) \cap \{x,x-e_{1}, x-e_{2}, x-e_{1} - e_{2}\}| = 2.$
Since $\s \in \cA_{4}$, this fact (and the parity Lemma \ref{lem:parity}) implies that the collection $\{y \in [1:L]^{2} \,: \, \sigma_{y} = -1 \}$
of sites with spin $-1$ must be a rectangle of width or height 1. In particular, $|\sigma| \leq L$ and so $\left(\nabla_xf(\s)\right)^2 \leq 4\left(g(1/L)\right)^2 = 0$ for $L > 4$ by \eqref{eq:g}.

\noindent\textbf{Case 4 ($\s \in \cA_6$ and $\s^x \in \cA_4$):}
The only spin flips which remove $2$ defects are those in which the initial state has $3$ neighboring defects which are replaced by a single defect (see the first transition in Figure \ref{FigPairMove}, which is a generic transition in this sense). It follows that $|\{\s \in \cA_6\,:\, \exists\, x \in \L \textrm{ with } \s^x \in \cA_4 \}|\lesssim |\cA_4|$.
By Lemma \ref{LemmaExtensionWoo}, $|\cA_4|\lesssim L^4$.
Also for any $\s \in \cA_6$ the number of sites at which a spin flip will reduce the number of defects is trivially bounded from above by $24$ (the total number of sites that can contain a defect in any of their associated plaquettes).
It follows that
\begin{align}
  \sum_{\s\in\cA_{6}} \sum_{\substack{x\ :\\ \s^x \in \cA_4}}\pi(\s)c(x,\s)\left(\nabla_x f(\s)\right)^2 \lesssim   e^{2\b} e^{-8\b} \pi(+) \lesssim  e^{-6 \b}\,.
\end{align}

The following case is the only one which gives a non-negligible contribution to our upper bound on the Dirichlet form.

\noindent\textbf{Case 5 ($\s \in \cA_6$ and $\s^x \in \cA_6$):}
As in Case 3, $\sigma$ has at least two neighbouring defects.
By the same argument as in Lemma \ref{LemmaExtensionWoo}, we have $|\{\s \in \cA_6\,:\, \exists\, x \in \L \textrm{ with } \s^x \in \cA_6 \}|\lesssim  L^5$.
Furthermore, any spin flip at a site which is not a neighbour of a defect must change the number of defects, so $|\{x\in \L\,:\, |p(\s)| = |p(\s^x)|\}| \leq 24$.
Finally for $\s \in \cA_6$ and $\s^x \in \cA_6$ we have $\pi(\s) = \pi(+)e^{-6\b}$ and $c(x,\s) = 1$, it follows that
\begin{align}
  \sum_{\s\in\cA_{6}} \sum_{\substack{x\ :\\ \s^x \in \cA_6}}\pi(\s)c(x,\s)\left(\nabla_x f(\s)\right)^2 \lesssim  e^{-2 \b} e^{(5/2)\b} e^{-6\b} \pi(+) \approx e^{-2 \b}\, e^{-(7/2) \b}.
\end{align}

Combining the bounds from Cases $1$--$5$ above with Inequality \eqref{eq:dupbnd} we have,
\begin{align}
  T^+_{\rm rel}(L_c) \geq \frac{\var_{\p}^{+}(f)}{\cD_\L^{+}(f)} \gtrsim e^{(7/2)\b}\,.
\end{align}


\section{Analysis of Periodic Boundary Condition: Proof of Theorem \ref{ThmMainResPer}} \label{SecPerBoundRes}

Throughout this section, we often reserve subscripts for a time index and use the ``bracket" notation $x[i] \equiv x_{i}$ to indicate an element of a vector $x = (x_{1},\ldots,x_{k})$. In this section, we have many explicit probabilistic calculations related to Markov processes $\{X_{t}\}_{t \geq 0}$ running according to some generator and with starting points $X_{0} = x$. When we wish to emphasize the starting point of a process in such a calculation, we use subscripts, as in \textit{e.g.} $\P_{x}[X_{t} \in S]$, $\E_{x}[f(X_{t})]$.

\subsection{Lower Bound}
We prove the lower bound in Theorem \ref{ThmMainResPer} by constructing and analyzing a test function.

We begin by giving the heuristic that guides our proof of this inequality. Roughly speaking, we expect the relaxation time of the dynamics of the spin model $\{X_{t}\}_{t \geq 0}$ to be bounded below by the relaxation time of the dynamics of the trace $\{ \hat{X}_{t} \}_{t \geq 0}$ of $\{X_{t}\}_{t \geq 0}$ on the set $\cG = \{ X \in \config \, : \, H_{\L}^{\mathrm{per}}(X) = \min_{\h\in\config}H_{\L}^{\mathrm{per}}(\h)\}$ of ground states. Furthermore, this ``trace walk" $\{ \hat{X}_{t}\}_{t \geq 0}$ behaves very much like simple random walk on the hypercube $\{-1,+1\}^{2L-1}$. This can be made precise in the following way: there is a natural bijection $w \, : \, \cG \mapsto \{-1,+1\}^{2L-1}$  (see Definition \ref{DefBijectionPeriodic} below) under which the dynamics of the trace walk $\{w(\hat{X}_{t}) \}_{t \geq 0}$ are a very small perturbation of the usual simple random walk on the hypercube $\{-1,+1\}^{2L-1}$, with one added transition that flips all signs. Thus, we expect that a test function based on an eigenfunction for the kernel of simple random walk on the hypercube with largest eigenvalue, the usual ``Hamming weight" function, should be able to pick out the slowest mixing behaviour of $\{X_{t}\}_{t \geq 0}$.

We now make this heuristic precise. We begin by defining the bijection $w \, : \, \cG \mapsto \{-1,+1\}^{2L-1}$, used to identify the ground states, as well as a collection of functions that will allow us to extend a test function on $\cG$ to a test function on all of $\config$.
Note, that to avoid excessive subscripts we use the notation $\sigma[i,j]$ to denote the $(i,j)^{\textrm{th}}$ component of $\sigma$, previously denoted $\sigma_{(i,j)}$.
In words, $w$ assigns a $-1$ to all the rows and columns of a ground state that are flipped with respect to the $+$ configuration, and $+1$ to all the others, where the columns are labeled $1$ to $L$ (left to right), and the rows $L+1$ to $2L-1$ (bottom to top). The top row of spins are fully specified by all the other rows and columns.

We give the full notation required to define our test function in the following definition; the informal description of this notation in Remark \ref{RemRealRDef} is sufficient to follow the remainder of the argument:

\begin{defn} [Preliminary Notation for Test Function] \label{DefBijectionPeriodic}
 We define a bijection $w \, : \, \cG \mapsto \{-1,+1\}^{2L-1}$ between the collection of ground states and the $(2L-1)$-hypercube by the following formula for its inverse:
\be \label{EqCristinaBijection}
w^{-1}(v) [i,j] &= v[i] v[L+j], \qquad i \in [1:L], \, j \in [1:(L-1)] \\
w^{-1}(v)[i,L] &= v[i], \qquad \qquad \quad \,  i \in [1:L].
\ee

For $v \in \{ -1, 1\}^{2L-1}$, define $|v| = \sum_{i=1}^{2L-1} \1_{v[i] = 1}$ to be the Hamming weight of $v$. We also denote by $\prec$ some fixed total order on $\{-1,+1\}^{2L-1}$ that extends the usual Hamming partial order.

For  $m \in [1:L]$, $k \in [1:L]$, and $\sigma, \eta \in \cG$ with $w(\sigma)$, $w(\eta)$ differing at the single index $\ell \in [1\!:\!L]$ and with $w(\sigma) \prec w(\eta)$, define the configuration $R(\sigma,\eta,m,k)$ by
\be
R(\sigma, \eta,m,k)[\ell,j] &= \eta[\ell,j], \qquad j \in [m:(m+k-1)] \\
R(\sigma, \eta, m,k) [i,j] &= \sigma[i,j], \qquad \text{all other entries},
\ee
where in this case the ``interval" $[m\! :\! (m+k-1)]$ is defined modulo $L$ (so that, if $L = 9$, we have $[7\!:\!2] = \{7,8,9,1,2\}$).

Similarly, for $\sigma,\eta$ with $w(\sigma)$, $w(\eta)$ differing at the single index $\ell \in [(L+1):(2L-1)]$ and with $w(\sigma) \prec w(\eta)$, define the configuration $R(\sigma,\eta,k,m)$ by
\be
R(\sigma, \eta,m,k)[i,\ell] &= \eta[i,\ell-L], \qquad i \in [m:(m+k-1)] \\
R(\sigma, \eta, m,k) [i,j] &= \sigma[i,j], \qquad \text{all other entries.}
\ee
Finally, for $\sigma, \eta$ with $w(\sigma) = - w(\eta)$ and $w(\sigma) \prec w(\eta)$, define the configuration $R(\sigma, \eta, m,k)$ by
\be
R(\sigma, \eta,m,k)[i,L] &= \eta[i,L], \qquad i \in [m:(m+k-1)] \\
R(\sigma, \eta, m,k) [i,j] &= \sigma[i,j], \qquad \text{all other entries.}
\ee
For convenience,  when $w(\s) \prec w(\h)$ we let $R(\s,\h,m,0)=\s$, and when $w(\eta) \prec w(\sigma)$ we define $R(\sigma,\eta,m,k) \equiv \emptyset$ for each $k\in [0:L]$.

We note that, if you restrict the range of $k$ to the set $[1:L]$, then $R$ is an injective map\footnote{Note that obtaining this injectivity is the reason we insist that $w(\sigma) \prec w(\eta)$.} - that is, if $R(\sigma, \eta,m,k) = R(\sigma', \eta', m',k')$ for $k, k' \in [2:L-1]$, then $\sigma = \sigma'$, $\eta = \eta'$, $m=m'$ and $k=k'$. For $k \in [1:L]$, let
\be \label{DefMinimalPaths}
\mathcal{R}_{k} = \{ \zeta \in \Omega_{[1:L]^{2}} \, : \, \exists  \, \sigma, \, \eta, \, m \, \, \text{ s.t. } \, \, \zeta = R(\sigma,\eta,m,k) \},
\ee
so that $\mathcal{R} \equiv \cup_{k=1}^{L} \mathcal{R}_{k}$ is the collection of points  along these minimal-length and -energy paths between ground states. Note that, for $k, k' \in [2:(L-2)]$, $\mathcal{R}_{k} \cap \mathcal{R}_{k'} = \emptyset$. Thus, for $\zeta \in \cup_{k=2}^{L-1}\mathcal{R}_{k}$, let $\sigma(\zeta), \eta(\zeta), m(\zeta), k(\zeta)$ be the unique elements that satisfy
\be \label{EqDefZetaBackFunc}
\zeta = R(\sigma(\zeta),\eta(\zeta),m(\zeta),k(\zeta)).
\ee
When $\zeta \in \mathcal{R}_{1} \cup \mathcal{R}_{L-1}$, there may be several choices that satisfy Equation \eqref{EqDefZetaBackFunc}, since these configurations have a single spin flipped relative to an element of the ground state; this single spin might be the first or last spin to be flipped in a path that is flipping a row or column; in total there can be up to four choices.  Denote by $\phi(\zeta)$ the number of these choices, and denote the choices themselves by $(\sigma_{1}(\zeta), \ldots, k_{1}(\zeta)), \ldots , ( \sigma_{\phi(\zeta)}(\zeta),\ldots, k_{\phi(\zeta)}(\zeta))$, in any arbitrary but fixed order.

\end{defn}

\begin{remark} \label{RemRealRDef}
 For fixed $m \in L$, the sequence $\{ R(\sigma,\eta,m,k) \}_{k=0}^{L}$ gives one of many minimal-length paths from $\sigma$ to $\eta$: that is, $R(\sigma,\eta,m,0) = \sigma$, $R(\sigma,\eta,m,L) = \eta$, and $R(\sigma,\eta,m,k)$ is obtained by flipping $k$ adjacent spins of the column on which $\sigma, \eta$ disagree. Roughly speaking: $m$ identifies where in the column we start ``flipping" from $\sigma$ to $\eta$, and $k$ tells us ``how far" we are in the path from $\sigma$ to $\eta$. See Figure \ref{FigDominantTransitionsBetweenGroundStates} for a sample pair of ground states and an element along the path between them.

\begin{figure}[h]
\includegraphics[scale = 0.7]{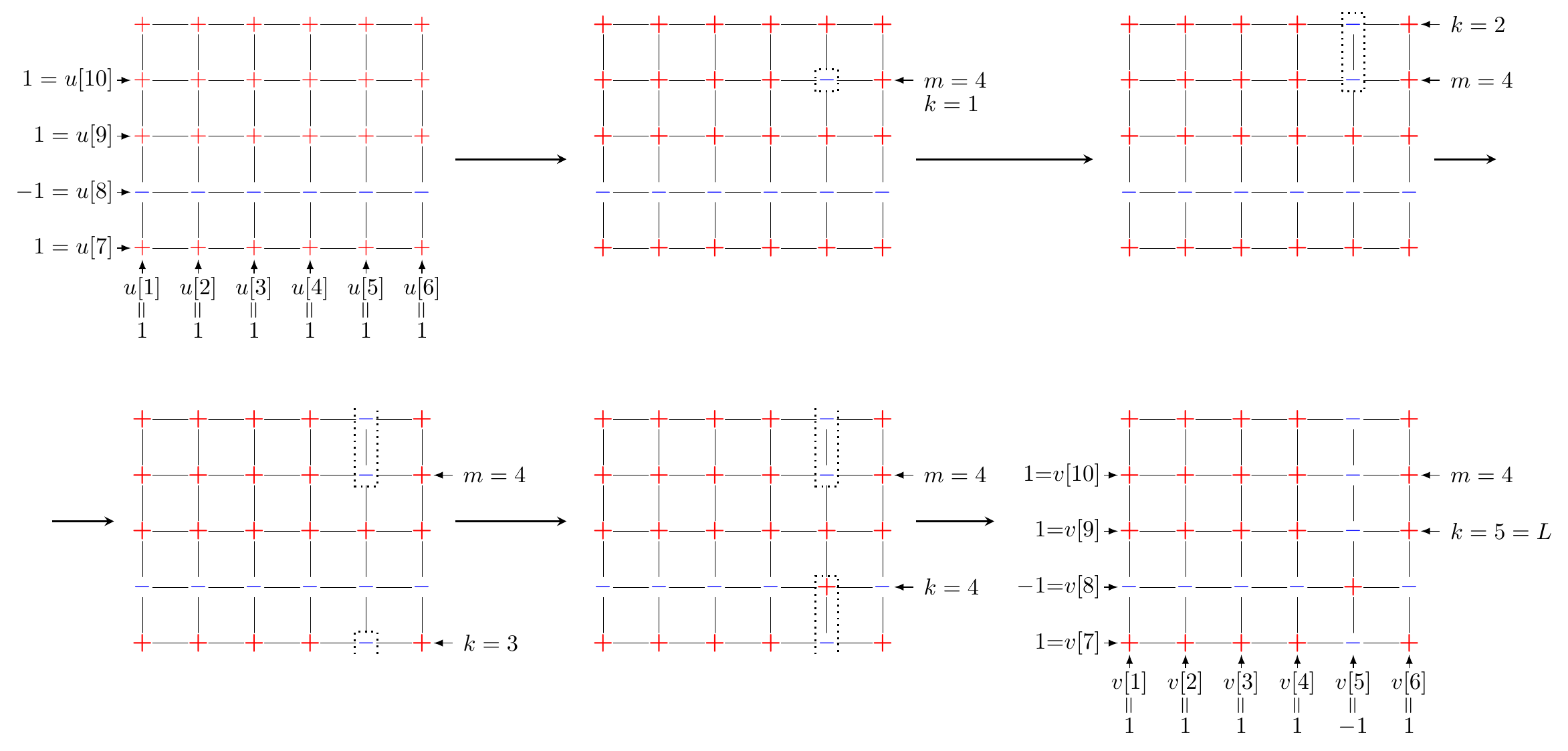}
\caption{\label{FigDominantTransitionsBetweenGroundStates} A minimal-length path $\{R(\s,\h,m,k)\}_{k=0}^{L}$ between two ground states $\s = w^{-1}(u)$ and $\h=w^{-1}(v)$, where $u$ and $v$ are given in the first and final frames.}
\end{figure}

\end{remark}

We will see that the paths of the form $( R(\sigma,\eta,m,k) )_{k=0}^{L}$ describe the only high-probability paths between pairs of ``neighboring" ground states (differ by one coordinate of $w$). This allows us to lift test functions from $\{-1,+1\}^{2L-1}$ to similar test functions on $\Omega_{[1:L]^{2}}$. We can now define a simple test function:

\begin{defn} [Test Function] \label{DefTestTrelLowerPeriodic}
Define $\hat{g} \, : \, \{-1,+1\}^{2L-1} \mapsto \mathbb{R}$ by
\be
\hat{g}(u) = \left| \, |u| - \frac{2L-1}{2} \, \right|.
\ee
Define $g \, : \, \config \mapsto \mathbb{R}$ by
\begin{align*}
  g(\zeta) = 
  \begin{cases}
    \hat{g}(w(\zeta)), & \zeta \in \mathcal{G},\\
    \frac{k(\zeta)}{L} \hat{g}(w(\eta(\zeta))) + (1 - \frac{k(\zeta)}{L})\hat{g}(w(\sigma(\zeta))), &\zeta \in \cup_{k=2}^{L-2}\mathcal{R}_{k}, \\
    \frac{k_{1}(\zeta)}{L} \hat{g}(w(\eta_{1}(\zeta))) + (1 - \frac{k_{1}(\zeta)}{L})\hat{g}(w(\sigma_{1}(\zeta))), & \zeta \in \mathcal{R}_{1} \cup \mathcal{R}_{L-1}, \\
    0,&\zeta \notin \mathcal{R}.
  \end{cases}
 \end{align*}
\end{defn}
This is well-defined for $L > 4$, since $\mathcal{R}_{1} \cup \mathcal{R}_{L-1}$ is disjoint from $ \cup_{k=2}^{L-2}\mathcal{R}_{k} $ (the former are obtained by flipping exactly 1 spin from a ground state; the latter all differ from any ground state by at least $2$ spins). Note that this test function essentially ignores the behaviour of $\{X_{t}\}_{t \geq 0}$ except on the collection of ground states and minimal-length paths between ground states. On the minimal-length paths between ground states, $g$ linearly interpolates the values of $\hat{g}$, with the possibility of an error of $\frac{2}{L}$ for configurations that are the first or last configuration along a minimal-length path.

We are primarily interested in the following property of $g$: if $\zeta_{1}, \zeta_{2} \in \mathcal{R}$ and $\cL^{\rm per}_{\L}(\zeta_{1},\zeta_{2})  \neq 0$, then
\be \label{IneqSimpleGDiff}
|g(\zeta_{1}) - g(\zeta_{2})| \leq \frac{3}{L};
\ee
note that we require a factor of 3 (rather than 1) in this inequality to deal with the possibility that $\zeta_{1}$ or $\zeta_{2}$ are in the set $\mathcal{R}_{1} \cup \mathcal{R}_{L-1}$.

We now compare the Dirichlet form and variance of the test function $g$ given in Definition \ref{DefTestTrelLowerPeriodic}. We begin with a lower bound on the variance. If $X \sim \pi^{\rm per}_{\L}(\cdot)$, and $Y \sim \pi^{\rm per}_{\L}( \cdot \, | \, Y \in \mathcal{G})$ is distributed uniformly on the ground states, then
\be \label{IneqLowerBoundTRelPeriodicVarianceFinal}
\var_{\L}^{\rm per}(g) &= \E[g(X)^{2}] - \E[g(X)]^{2} \\
&\geq \pi(\mathcal{G}) \times (\E[g(Y)^{2}] - \E[g(Y)]^{2}) \\
&\gtrsim (\E[g(Y)^{2}] - \E[g(Y)]^{2}) \gtrsim L, \\
\ee
where the second-last line uses the fact that $\pi(\mathcal{G}) \gtrsim 1$ from Lemma \ref{lem:dom} and the last line uses the fact that $g(Y) \sim \mathrm{Bin}(2L-1, 0.5)$.

We next calculate an upper bound on the Dirichlet form. We observe that if $\sigma, \eta$ are two ground states with $w(\sigma) = - w(\eta)$, then $g(\sigma) = g(\eta)$, and so these minimal-length transitions do not contribute to the following calculation. Using reversibility of $\cL^{\rm per}_{\L}$ and the fact that $\cL^{\rm per}_{\L}(\sigma, \eta) = 0$ for all pairs $\sigma \in \mathcal{G}$, $\eta \notin \mathcal{R}$, we have
\be \label{IneqLowerBoundTRelPeriodicSplitting}
2 \cD_{\L}^{\rm per}(g) &=  \sum_{\sigma, \eta \in \config} (g(\sigma) - g(\eta))^{2} \pi_{\L}^{\rm per}(\sigma) \cL(\sigma, \eta) \\
&\leq \sum_{\sigma \in \mathcal{G}} \sum_{\eta \in \mathcal{R}}  (g(\sigma) - g(\eta))^{2} \pi_{\L}^{\rm per}(\sigma) \cL^{\rm per}_{\L}(\sigma, \eta) + \sum_{k=1}^{L-1} \sum_{\sigma \in \mathcal{R}_{k}} \sum_{\eta \in \mathcal{R}_{k \pm 1}}  (g(\sigma) - g(\eta))^{2} \pi_{\L}^{\rm per}(\sigma) \cL(\sigma, \eta) \\
&+ \sum_{\sigma \in \mathcal{R}} \sum_{\eta \notin  \mathcal{R}}  (g(\sigma) - g(\eta))^{2} \pi_{\L}^{\rm per}(\sigma) \cL(\sigma, \eta)  \\
&\equiv S_{1} + S_{2} + S_{3},
\ee
where the inequality is there because the sets $\{\mathcal{R}_{k}\}_{k=1}^{L-1}$ are not quite disjoint. We calculate these three terms separately. Using the estimate $\pi_{\L}^{\rm per}(\mathcal{G}) = 1 - o(1)$ from Lemma \ref{lem:dom}, the bound \eqref{IneqSimpleGDiff}, and the fact that $\cL(\sigma,\eta) \in \{ e^{-4 \beta}, 0\}$ for $\sigma \in \mathcal{G}, \eta \notin \mathcal{G}$, we have

\be \label{IneqLowerBoundTRelPeriodicS1}
S_{1} &= \sum_{\sigma \in \mathcal{G}} \sum_{\eta \in \mathcal{R}}  (g(\sigma) - g(\eta))^{2} \pi_{\L}^{\rm per}(\sigma) \cL(\sigma, \eta)  \\
&\lesssim \sum_{\sigma \in \mathcal{G}} \, |\{\eta \in \mathcal{R} \, : \, \cL(\sigma,\eta) > 0\}| \, L^{-2} \, | \mathcal{G}|^{-1} \, e^{-4 \beta} \\
&\lesssim | \mathcal{G} | \times L^{2} \times L^{-2} \times |\mathcal{G}|^{-1} \times e^{- 4 \beta}  \\
&=  e^{-4 \beta}.
\ee

Next, note that an element $ \zeta \in \mathcal{R}_{k}$  is uniquely determined by a choice of $\sigma \in \mathcal{G}$, $\eta \in \mathcal{G}$ s.t. $|w(\sigma) - w(\eta)|=1$, and $m \in [1:L]$. Multiplying these three factors,
\be
|\mathcal{R}_{k}| \leq |\mathcal{G}| \times (2L) \times (L).
\ee
Using this bound, the trivial bound $\cL(\sigma,\eta) \leq 1$ for any configurations $\sigma,\eta$, and applying Lemma \ref{lem:dom} and Inequality \eqref{IneqSimpleGDiff} as above, we have:

\be \label{IneqLowerBoundTRelPeriodicS2}
S_{2} &= \sum_{k=1}^{L-1} \sum_{\sigma \in \mathcal{R}_{k}} \sum_{\eta \in \mathcal{R}_{k \pm 1} \, : \, \cL(\sigma,\eta) > 0}  (g(\sigma) - g(\eta))^{2} \pi_{\L}^{\rm per}(\sigma) \cL(\sigma, \eta) \\
&\lesssim L \times (|\mathcal{G}| \times L^{2}) \times (1) \times L^{-2} \times (e^{-4 \beta} \times \frac{1}{|\mathcal{G}|}) \times (1) \\
&= L \, e^{- 4 \beta} \lesssim e^{-\frac{7}{2} \beta}.
\ee

Finally, we note that for $\sigma \in \mathcal{R}$ and $\eta$ with $\cL(\sigma,\eta) > 0$, we must have $|D(\eta)| \leq |D(\sigma)|+4 \leq 8$. Furthermore, if $|D(\eta)| = 6$, $|D(\sigma)| = 4$, and $\eta = \sigma^{x}$ for some $x \in [1:L]^{2}$, then the single flipped spin $x$ must share a plaquette with one of the $O(1)$ defects in $D(\sigma)$. In particular, for each fixed $\sigma \in \mathcal{R}$,
\be
|\{\eta \notin \mathcal{R} \, : \, \cL(\sigma,\eta)> 0, \, |D(\eta)| = 6\}| = O(1).
\ee
 Finally, we use the trivial bound $g(\sigma) \leq L$. Then,  applying Lemma \ref{lem:dom} and Inequality \eqref{IneqSimpleGDiff} as above, we have:

\be \label{IneqLowerBoundTRelPeriodicS3}
S_{3} &= \sum_{\sigma \in \mathcal{R}} \sum_{\eta \notin \mathcal{R}}  (g(\sigma) - g(\eta))^{2} \pi_{\L}^{\rm per}(\sigma) \cL(\sigma, \eta) \\
&= \sum_{\sigma \in \mathcal{R}} \sum_{\eta \notin  \mathcal{R}, \, |D(\eta)| = 6}  (g(\sigma) - g(\eta))^{2} \pi_{\L}^{\rm per}(\sigma) \cL(\sigma, \eta) + \sum_{\sigma \in \mathcal{R}} \sum_{\eta \notin  \mathcal{R}, \, |D(\eta)| = 8}  (g(\sigma) - g(\eta))^{2} \pi_{\L}^{\rm per}(\sigma) \cL(\sigma, \eta)\\
&\lesssim (L^{2} \times | \mathcal{G} |) \times  (1) \times L^{2} \times (\frac{1}{|\mathcal{G}|} \, e^{-4 \beta}) \times e^{-2 \beta} + (L^{2} \times | \mathcal{G}|) \times L^{2} \times L^{2} \times (\frac{1}{|\mathcal{G}|} \, e^{-4 \beta}) \times e^{-4 \beta} \\
&= L^{4} e^{-6 \beta} + L^{6} e^{-8 \beta} \lesssim e^{-4 \beta}.
\ee
Combining Inequality \eqref{IneqLowerBoundTRelPeriodicSplitting} with Inequalities \eqref{IneqLowerBoundTRelPeriodicS1}, \eqref{IneqLowerBoundTRelPeriodicS2} and \eqref{IneqLowerBoundTRelPeriodicS3}, we have
\be \label{IneqLowerBoundTRelPeriodicDirichletFinal}
\cD_{\L}^{\rm per}(g) \lesssim e^{-\frac{7}{2} \beta}.
\ee
Combining Inequalities \eqref{IneqLowerBoundTRelPeriodicVarianceFinal} and \eqref{IneqLowerBoundTRelPeriodicDirichletFinal}, the variational characterization of the spectrum in \eqref{eq:gap} gives

\be
\trel(L_{c}, +) &\geq \sup_{\var_{\pi}(f) \neq 0} \frac{\var_{\L}^{\rm per}(f)}{\cD_{\L}^{\rm per}(f)} \\
&\geq \frac{\var_{\L}^{\rm per}(g)}{\cD_{\L}^{\rm per}(g)} \\
&\gtrsim  L \times e^{\frac{7}{2} \beta} = e^{4 \beta}.
\ee
This completes the proof of the lower bound of Theorem \ref{ThmMainResPer}.


\subsection{Upper Bound}

We now prove the upper bound in Theorem \ref{ThmMainResPer}. As in the lower bound, the main heuristics are that the trace $\{\hat{X}_{t}\}_{t \geq 0}$ of $\{X_{t}\}_{t \geq 0}$ on $\cG$ should determine the mixing time, and that this trace is very similar to simple random walk on the hypercube. Roughly speaking, our argument has the following steps:

\begin{enumerate}
\item We show that the dynamics of the trace process can be compared to simple random walk on the hypercube, and conclude that it has a similar mixing time of order $O(L \log(L)^{2})$ (Lemma \ref{LemmaHyperComp} and Corollary \ref{LemmaTraceMixing}).
\item Using the characterization of mixing times in terms of hitting times of large sets (see Theorem 1 of \cite{Peres2015}), we conclude from step 1 that the hitting time of any large set (for the trace process) is bounded by $O(L \log(L)^{2})$.
\item We show, for the original process, that the expected hitting time of the set $\mathcal{G}$ is  $ O \left( \beta^{9} e^{4 \beta} \right)$. (Lemma \ref{LemmaHittingGround}).
\item 
We show, for the original process, that the expected transition time between elements of $\mathcal{G}$ is $ O \left( \beta^{8} e^{3.5 \beta} \right)$ (Lemma \ref{LemmaExpectedExcursionPerGr}).
\item Putting these bounds together, and recalling that $\pi_{\L}^{\rm per}(\mathcal{G}) = 1 - o(1)$, we conclude that the hitting time of any large set (for the original process) cannot be much more than $O(L \times \beta^{8} e^{3.5 \beta} + \beta^{9} e^{4 \beta}) = O(\beta^{9} e^{4 \beta})$. We conclude the argument by again using the characterization of \cite{Peres2015}.
\end{enumerate}

The main ingredient in Step 3 is relating the defect dynamics in the case of periodic boundary conditions to the defect dynamics in the case of all-plus boundary conditions.
This relation is a consequence of the following three simple observations:

\begin{enumerate}
\item By Lemma \ref{lem:parity}, the sets $p^{+}(\Omega_{[1:L]^{2}})$ and $p^{\mathrm{per}}(\Omega_{[0:L]^{2}})$ are the same.
\item Let $\{X_{t}^{(+)}\}_{t \geq 0}$, $\{X_{t}^{(\mathrm{per})}\}_{t \geq 0}$ be Markov chains evolving according to the generators $\cL_{[1:L]^{2}}^{+}$ and $\cL_{[0:L]^{2}}^{(\mathrm{per})}$ respectively, as defined in Equation \eqref{eq:gen}. Then $\{ p^{+}(X_{t}^{+})\}_{t \geq 0}$ and $\{ p^{\mathrm{per}}(X_{t}^{(\mathrm{per})})\}_{t \geq 0}$ are both themselves Markov chains, on the common space $p^{+}(\Omega_{[1:L]^{2}}) = p^{\mathrm{per}}(\Omega_{[0:L]^{2}})$, and they have the same stationary distribution. We denote their generators by $\cQ_{[1:L]^{2}}^{+}$ and $\cQ_{[0:L]^{2}}^{(\mathrm{per})}$ respectively.
\item For  all $x,y \in p^{+}(\Omega_{[1:L]^{2}})$,
\be \label{LemmaCriBijection}
\cQ_{[0:L]^{2}}^{(\mathrm{per})}(x,y) \geq   \cQ_{[1:L]^{2}}^{+}(x,y).
\ee
Note that this inequality is really not an equality, as there are transitions in the periodic boundary condition case that are not possible in the all-plus boundary condition case.
\end{enumerate}

There is an important sequence of observations that makes most of the following calculation quite straightforward, even if some of the details are messy:
First, almost all ``excursions" between elements of the ground state $\mathcal{G}$ remain within the low-energy set $\mathcal{R}$. Furthermore, each excursion from $\mathcal{G}$ can be decomposed into its first jump into $\mathcal{R}_{1} \cup \mathcal{R}_{k-1}$, and the remaining excursions into sets of the form $\{R(\sigma,\eta,m,k)\}_{k=2}^{L-2}$ for some fixed $\sigma, \eta \in \mathcal{G}$ and starting point $m \in [1:L]$. As long as these longer excursions remain within a set $\{R(\sigma,\eta,m,k)\}_{k=2}^{L-2}$ , they follow \textit{exactly} the law of a simple random walk on $[2:(L-2)]$, with the obvious bijection between $\{R(\sigma,\eta,m,k)\}_{k=2}^{L-2}$ and $[2:(L-2)]$ (see Figure \ref{FigDominantTransitionsBetweenGroundStates}). In particular, essentially all of the following calculations will be written in terms of simple random walk with killing, combined with simple 1-step analyses of what occurs between excursions.

Continuing more formally, we introduce the following definition for the discrete-time trace of a continuous-time Markov chain:

\begin{defn} [Non-Lazy Trace Chain] \label{DefTrace}
Let $\{X_{t}\}_{t \geq 0}$ be a c\`adl\`ag Markov process on finite set $\Omega$, and let $S \subset \Omega$. Define two sequences of times $\{t_{i}\}_{i \in \mathbb{N}}$, $\{s_{i}\}_{i \in \mathbb{N}}$ by the following recursions:
\be
t_{1} &= \inf \{t > 0 \, : \, X_{t} \in S \} \\
s_{i} &= \inf \{t > t_{i} \, : \, X_{t} \notin S\} \\
t_{i+1} &= \inf \{t > s_{i} \, : \, X_{t} \in S, \, X_{t} \neq X_{t_{i}} \}.
\ee
Then define the trace process $\{\hat{X}_{k}\}_{k \in \mathbb{N}}$, on $S$,  by the equation
\be
\hat{X}_{k} \equiv X_{t_{k}}.
\ee
We denote by $Q_{\mathcal{G}}^{\rm per}$ the transition kernel associated with the trace of $\{X_{t}\}_{t \geq 0}$ on the set $\mathcal{G}$.
\end{defn}

Denote by $Q_{\rm hyp}$ the following discrete-time transition kernel on $\{-1,+1\}^{2L-1}$:
\begin{align*}
Q_{\rm hyp}(u,v) &= \frac{1}{2L-1}, \qquad \textrm{if }\ |u-v| = 1, \\
Q_{\rm hyp}(u,v) &= 0, \qquad \text{otherwise.} \\
\end{align*}
Recall $|\cdot|$ is the Hamming distance, i.e. $|u-v| = 1$ iff $u$ and $v$ differ at exactly one coordinate.

The following estimate allows us to make very strong conclusions about the mixing of $Q_{\mathcal{G}}^{\rm per}$ in terms of $Q_{\rm hyp}$:

\begin{lemma} \label{LemmaHyperComp}
For  $Q_{\mathcal{G}}^{\rm per}$, $Q_{\rm hyp}$ as above, and the function $w$ as in Definition \ref{DefBijectionPeriodic}, we have
\be \label{IneqLemmaHyperCompMainConc}
Q_{\mathcal{G}}^{\rm per} (w^{-1}(u),w^{-1}(v)) \gtrsim Q_{\rm hyp} (u,v)
\ee
uniformly in $u,v \in \{-1,+1\}^{2L-1}$. Furthermore, for $\{X_{t} \}_{t \geq 0}$ a copy of the plaquette dynamics started at $X_{0} = \sigma \in \mathcal{G}$, let
\be \label{EqDefZetaEndCleanup}
\tau_{\rm start} &= \inf \{t > 0 \, : \, X_{t} \neq \sigma\} \\
\tau_{\rm end} &= \inf \{ t > \tau_{\rm start} \, : \, X_{t} \in \cG \text{ or } |D(X_{t})| > 4\}
\ee
be the times at which the first excursion from $\cG$ starts and ends. Then
\be \label{IneqExcursionBadProbs}
\P_\s[|D(X_{\tau_{\rm end}} )| > 4] &\lesssim L^{2} e^{-2 \beta} \\
1 - \P_\s[X_{\tau_{\rm end}} = \sigma] &\approx  L^{-1}.
\ee
\end{lemma}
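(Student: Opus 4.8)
The statement of Lemma \ref{LemmaHyperComp} has three assertions: the comparison \eqref{IneqLemmaHyperCompMainConc} between the trace kernel $Q_{\mathcal{G}}^{\rm per}$ and the hypercube walk $Q_{\rm hyp}$, and the two excursion estimates in \eqref{IneqExcursionBadProbs}. I would prove the two excursion estimates first, since the comparison essentially follows from the second one. Fix a ground state $\sigma \in \mathcal{G}$ and run $\{X_t\}$ from $\sigma$. At time $\tau_{\rm start}$ a single spin has been flipped, creating exactly four defects arranged in a small configuration: either a $2\times 2$ block of plaquette defects (if the flipped spin is in the ``bulk'' relative to $\sigma$'s structure) or some degenerate version near a row/column boundary. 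The process then runs the defect dynamics from this 4-defect configuration until it either returns to $\mathcal{G}$ or reaches a state with $|D(X_t)| > 4$. The key observation is that, as long as the defect count stays at $4$, the four defects organize into two pairs of adjacent defects (by the parity constraint, Lemma \ref{lem:parity}, within whichever sublattice structure is relevant), and each pair performs a continuous-time simple random walk along a row or column at rate $\Theta(e^{-2\beta})$ per lattice step — exactly the mechanism in Figures \ref{FigPairMove} and \ref{FigCornerWalk}. This identifies the excursion, restricted to the 4-defect states, with a simple random walk on an interval of length $\Theta(L)$ (one coordinate of $w$ being flipped row-by-row or column-by-column), which is precisely the structure encoded by the sets $\{R(\sigma,\eta,m,k)\}_{k=2}^{L-2}$ of Definition \ref{DefBijectionPeriodic}.

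Given this picture, the first line of \eqref{IneqExcursionBadProbs} is a union bound: while a pair of adjacent defects is wandering, it can ``spawn'' two extra defects only by a move that raises the energy, which happens at rate $\Theta(e^{-2\beta})$; the excursion lasts time $O(L^2 e^{2\beta})$ in expectation (the time for a SRW on $[0:L]$-length interval, sped up by $e^{-2\beta}$, to be absorbed), and the pair visits $O(L^2)$ sites, so the total probability of ever creating a fifth/sixth defect before absorption is $O(L^2 e^{-2\beta})$; more carefully, one bounds $\P_\sigma[|D(X_{\tau_{\rm end}})| > 4] \leq \sum (\text{rate of bad move}) \times (\text{expected time at each state})$ and uses that the excursion's total expected length, measured in number of ``fast'' clock rings, is $O(L^2)$. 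For the second line, note that the excursion is absorbed into $\mathcal{G}$ exactly when the moving pair collides with the stationary pair (or with itself wrapping around); conditioned on no fifth defect appearing — an event of probability $1 - O(L^2 e^{-2\beta}) = 1 - o(1)$ — this is a SRW on $[2:L-2]$ (or a slight variant accounting for the $\mathcal{R}_1, \mathcal{R}_{L-1}$ endpoints) started at a point of the form $m$, absorbed at the two ends, and the two ends correspond respectively to returning to $\sigma$ and to reaching a \emph{different} ground state $\eta$ with $|w(\sigma) - w(\eta)| = 1$. By the gambler's-ruin formula the probability of reaching the far end is $\Theta(1/L)$ — more precisely $\Theta(1)$ times the reciprocal of the interval length — which gives $1 - \P_\sigma[X_{\tau_{\rm end}} = \sigma] \asymp L^{-1}$, after accounting for the contribution of the $O(L^2 e^{-2\beta})$-probability ``bad'' event and the $O(1)$ endpoint ambiguities, which only perturb the estimate by $o(L^{-1})$.

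Finally, \eqref{IneqLemmaHyperCompMainConc} is read off from the second excursion estimate combined with the trace construction in Definition \ref{DefTrace}. The trace chain $Q_{\mathcal{G}}^{\rm per}$, from $w^{-1}(u)$, makes its next move by performing one excursion; with probability $\Theta(1/L)$ the excursion ends at a neighbour, and by symmetry among the $2L-1$ coordinates of $w$ (the row and column flips, plus the global flip) each neighbour $w^{-1}(v)$ with $|u-v|=1$ is roughly equally likely to be the one reached — one should check that the starting point $m$ of the excursion is chosen essentially uniformly, so no coordinate is favoured by more than a constant factor, and that the extra ``flip-everything'' transition only adds mass. Hence $Q_{\mathcal{G}}^{\rm per}(w^{-1}(u), w^{-1}(v)) \gtrsim \frac{1}{L} \cdot \frac{1}{2L-1} \cdot L = \Theta\!\left(\frac{1}{2L-1}\right) = \Theta(Q_{\rm hyp}(u,v))$ — here one normalizes by the number of coordinates and the $\Theta(1/L)$ escape probability cancels against the $\Theta(L)$ expected number of excursions between distinct-endpoint transitions.

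**Main obstacle.** The delicate point is the combinatorial bookkeeping near the boundary cases: verifying that an excursion from $\sigma$ really does decompose cleanly into ``pick a coordinate of $w$, pick a starting offset $m$, run SRW on an interval,'' including correctly handling the degenerate 4-defect configurations created when the first flipped spin lies on a row or column that is already flipped in $\sigma$, and confirming that the identification with $\{R(\sigma,\eta,m,k)\}$ is exact (not merely approximate) on the 4-defect states. Making the ``$1 - o(1)$ of excursions stay within $\mathcal{R}$'' statement precise enough to control both \eqref{IneqExcursionBadProbs} and the lower bound in \eqref{IneqLemmaHyperCompMainConc} simultaneously is where the real work lies; the random-walk estimates themselves are routine gambler's-ruin and occupation-time computations.
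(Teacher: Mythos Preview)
Your approach matches the paper's: couple the excursion on the four-defect states to a simple random walk on an interval with a small killing rate, use gambler's ruin for the endpoint distribution, and read off the trace-kernel lower bound from the second excursion estimate. The paper formalizes the coupling via an explicit kernel $K_{\rm SRW}$ on $[1{:}(L{-}1)]\cup\{\Delta\}$ with step rate $2$ and killing rate $8e^{-2\beta}+(L^{2}-12)e^{-4\beta}$, identified exactly with the process on $\mathcal{R}\setminus(\mathcal{G}\cup\mathcal{R}_{1}\cup\mathcal{R}_{L-1})$ (this is the content of Remark~\ref{RemSRWCouplingExplicit}), but the substance is what you describe.

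One correction worth making: once the two pairs of adjacent defects exist, they move at rate $\Theta(1)$, not $\Theta(e^{-2\beta})$ --- flipping the spin adjacent to a pair leaves the defect count at four, so the rate is $1$ (see the middle transitions in Figure~\ref{FigPairMove}). Consequently the excursion has expected duration $O(L^{2})$, not $O(L^{2}e^{2\beta})$, while the killing (creation of extra defects) occurs at rate $O(e^{-2\beta})$ per unit time from each four-defect state. The product is indeed $O(L^{2}e^{-2\beta})$, and your per-site accounting in the next clause lands on this, but as written your time-based and site-based accountings are mutually inconsistent; cleaning this up is precisely what the explicit $K_{\rm SRW}$ coupling accomplishes.
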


\begin{remark} [Coupling to Simple Random Walks] \label{RemSRWCouplingExplicit}

The proof of Lemma \ref{LemmaHyperComp} (and the rest of this section) relies heavily on the following simple observation: every configuration $\zeta \in  \mathcal{R} \backslash (\mathcal{G} \cup \mathcal{R}_{1} \cup \mathcal{R}_{L-1})$ consists of two pairs of adjacent defects, and these two pairs each  \textit{exactly} undergo independent random walk until the first time they are adjacent or new defects are added (again, see Figure \ref{FigDominantTransitionsBetweenGroundStates}).

More formally, fix $m \in [1:L]$ and $k \in [2:(L-2)]$, and let $\{L_{t}, U_{t}\}_{t \geq 0}$ be two independent simple random walks on $\mathbb{Z}$ with rate 1 and starting points $L_{0} = m$, $U_{0} = m + k$. Let $\{X_{t}\}_{t \geq 0}$ be the usual defects process, started at a point $X_{0} = \mathcal{R}(\sigma,\eta,m,k)$. Finally, let
\be
\tau_{1} &= \inf \{ s > 0 \, : \, X_{s} \notin \cup_{m \in [1:L]} \cup_{k \in [2:(L-1)]} \mathcal{R}(\sigma,\eta,m,k) \}\\
\tau_{2} &= \inf \{ s > 0 \, : \, U_{s} = L_{s} + 1 \text{ or } U_{s} = L_{s} + L -1\}.
\ee
Then, inspecting the generators of these processes, it is possible to couple $\{(X_{t},L_{t},U_{t})\}_{t \geq 0}$ so that
\be
X_{t} = \mathcal{R}(\sigma,\eta, L_{t}, U_{t} - L_{t})
\ee
for all $0 \leq t \leq \tau_{1} \leq \tau_{2}$.

This coupling gives a \textit{complete} description of the excursions of $\{X_{t}\}_{t \geq 0}$ in sets of the form $\{R(\sigma,\eta,m,k)\}_{k=2}^{L-2}$ for some fixed $\sigma, \eta \in \mathcal{G}$ and starting point $m \in [1:L]$. We use this coupling, and its immediate consequences, extensively in the remainder of this section to ``translate" well-known facts about simple random walk to our setting. Most of the proof simply consists of applying this coupling to excursions on the set $ \mathcal{R} \backslash (\mathcal{G} \cup \mathcal{R}_{1} \cup \mathcal{R}_{L-1})$.

The behaviour of the process outside of these excursions on $ \mathcal{R} \backslash (\mathcal{G} \cup \mathcal{R}_{1} \cup \mathcal{R}_{L-1})$ is fairly simple. Some excursions end by exiting $\mathcal{R}$, and we will see that this is quite rare. The remaining excursions will end at $\mathcal{R}_{1} \cup \mathcal{R}_{L-1} \cup \mathcal{G}$; since each element $\sigma \in \mathcal{R}_{1} \cup \mathcal{R}_{L-1}$ has at most $4$ neighbours $\eta$ with high transition probability $\cL(\sigma,\eta) = 1$ (again, see Figure \ref{FigDominantTransitionsBetweenGroundStates}), it is straightforward to analyze all transitions from within the set $\mathcal{R}_{1} \cup \mathcal{R}_{L-1} \cup \mathcal{G}$ that have non-negligible probability.

\end{remark}

\begin{proof}
Fix $u,v \in \{-1,+1\}^{2L-1}$ that differ at a single index $\ell$ and let $\sigma = w^{-1}(u)$, $\eta = w^{-1}(v)$. Assume for now that $1 \leq \ell \leq L$; the other cases will be essentially identical. Recalling notation from Definition \ref{DefBijectionPeriodic}, which will be used heavily throughout this proof, we assume without loss of generality that $u \prec v$.

We study $\{X_{t}\}_{t \geq 0}$ by charting its movement from a ground state, to an adjacent non-ground state with exactly 4 adjacent defects, to moving along minimal-energy paths between ground states; we stop keeping track of $\{X_{t}\}_{t \geq 0}$ as soon as it either returns to a ground state \textit{or} enters a state with 6 or more defects.


Let $\{X_{t}\}_{t \geq 0}$ be a copy of the plaquette dynamics started at $X_{0} = \sigma \in \cG$ and recall $\tau_{\rm start} = \inf \{t > 0 \, : \, X_{t} \neq \sigma\}$. By symmetry we have
\be \label{EqStartingExcursions}
\P_\s[X_{\tau_{\rm start}} \in \cup_{m=1}^{L} (R(\sigma,\xi,m,1) \cup R(\xi, \sigma, m,L-1))] = \frac{1}{L}
\ee
for all $\xi \in \mathcal{G}$ such that $|w(\sigma) - w(\xi)|= 1$ or $w(\s)=-w(\xi)$. Note that this means all other transition probabilities are 0.

Define the event $\mathcal{E}_{\rm start} = \{ X_{\tau_{\rm start}} \in \cup_{m=1}^{L} R(\sigma,\eta,m,1)\}$.  Next, let $\tau_{\rm exc}^{(1)} = \inf \{ t > \tau_{\rm start} \, : \, X_{t} \neq X_{\tau_{\rm start}}\}$. Then define a sequence of successive return and excursion times by the following formulae, with $i \in \mathbb{N}$:
\be
\tau_{\rm ret}^{(i)} &= \inf\{t > \tau_{\rm exc}^{(i)} \, : \, X_{t}  \notin \cup_{m = 1}^{L} \cup_{k=2}^{L-2} R(\sigma,\eta,m,k) \} \\
\tau_{\rm exc}^{(i+1)} &= \inf \{ t > \tau_{\rm ret}^{(i)} \, : \, X_{t} \neq X_{\tau_{\rm ret}^{(i)}}\}.
\ee
The reader may note that this formula looks slightly different from \textit{e.g.} \eqref{EqStartingExcursions} in that it contains only unions over $ R(\sigma,\eta,m,k)$, with none over $R(\eta,\sigma,m,k)$; this is because we have assumed that $\eta \in \Omega_{[1:L]^{2}}$ satisfies $w(\sigma) \prec w(\eta)$, so that \textit{e.g.} $R(\eta,\sigma,m,k) = \emptyset$ for all $m,k$.

We keep track of the behaviour of $\{X_{t}\}_{t \geq 0}$ in the excursions from $\cup_{m=1}^{L} (R(\sigma,\eta,m,1) \cup \mathcal{R}(\sigma,\eta,m,L-1))$ by using the following families of indicator functions:
\be
\phi^{(i)}_{\cG} = \1 \{X_{\tau_{\rm exc}^{(i)}} \in \cG \}, \,\, \phi^{(i)}_{\mathcal{R}} = \1 \{|D(X_{\tau_{\rm exc}^{(i)}})| = 4\}, \, \,\phi^{(i)}_{\Delta} = 1 - \phi^{(i)}_{\cG} - \phi^{(i)}_{\mathcal{R}}
\ee
and
\be
\psi^{(i)}_{1} = \1 \{X_{\tau_{\rm ret}^{(i)}} = X_{\tau_{\rm exc}^{(i)}} \}, \, \,
\psi^{(i)}_{\Delta} = \1 \{|D(X_{\tau_{\rm ret}^{(i)}})| > 4\}, \, \,
\psi^{(i)}_{L-1} = 1 - \psi^{(i)}_{0} - \psi^{(i)}_{\Delta}.
\ee
In the family associated with the letter $\phi$, the first indicates that the $i$'th excursion begins with $\{X_{t}\}_{t \geq 0}$ immediately returning to a ground state; the second indicates that the excursion begins by moving along a minimal-energy path between neighbouring ground states; the third indicates that something else has happened. In the family associated with the letter $\psi$, the first indicates that the $i$'th excursion ended where it started; the second indicates that the $i$'th excursion involves moving off of a minimal-energy path; the third indicates that the $i$'th excursion involves travelling along a minimal-energy path until a point that is adjacent to a different ground state.

Let $K_{\rm SRW}$ be the transition kernel on state space $[1:(L-1)] \cup \{\Delta\}$ with holding at $\{1,L-1,\Delta\}$ and other nonzero transitions rates:
\be
K_{\rm SRW}(i, i \pm 1) &= 2 \\
K_{\rm SRW}(i,\Delta) &= 8 e^{-2 \beta} + (L^{2}-12)e^{- 4 \beta}.
\ee
Let $\{Z_{t}\}_{t \geq 0}$ be a Markov process with this transition kernel started at 2, and let $\hat{\tau} = \inf\{t > 0 \, : \, Z_{t} \in \{1,L-1,\Delta\} \}$. By the discussion in Remark \ref{RemSRWCouplingExplicit} and simple counting of the states adjacent to $\cup_{k \in [2:(L-2)]} \cup_{m \in [1:L]} \mathcal{R}(\sigma,\eta,m,k)$ with 6 or more defects, on the event $\{\phi^{(i)}_{\mathcal{R}}  = 1\}$, we can couple $\{Z_{t}\}_{t \geq 0}$ to $\{X_{t}\}_{\tau_{\rm exc}^{(i)}}^{\tau_{\rm ret}^{(i)}}$ so that the following hold:
\be \label{SRW_Coupling_Properties}
\hat{\tau} &= \tau_{\rm ret}^{(i)} - \tau_{\rm exc}^{(i)} \\
\1 \{Z_{\hat{\tau}} = z \} &= \psi^{(i)}_{z}, \qquad \qquad z \in \{1,L-1,\Delta\}.
\ee

This correspondence allows us to analyze excursions using well-known properties of $K_{\rm SRW}$, which is simple random walk on the path with killing at constant rate. Recall the definition of $\tau_{\rm end}$ from Equality \eqref{EqDefZetaEndCleanup}, and let $I \in \mathbb{N}$ be the unique (random) integer satisfying
\be
\tau_{\rm end} \in \{\tau_{\rm exc}^{(I)}, \tau_{\rm ret}^{(I)} \};
\ee
that is, the first time that the walk either enters a ground state \textit{or} a state with 6 or more defects.

Combining Equalities \eqref{EqStartingExcursions} and \eqref{SRW_Coupling_Properties}, and using standard facts about hitting probabilities for simple random walk on the path, we have
\be
\P_{\sigma}[X_{\tau_{\rm end}} = \eta] &\gtrsim \P_{\sigma}[ \mathcal{E}_{\rm start}] \P_{\sigma}[X_{\tau_{\rm end}} = \eta | \mathcal{E}_{\rm start}] \\
&\gtrsim L^{-1} \sum_{i \in \mathbb{N}} \P_{\sigma}[X_{\tau_{\rm end}} = \eta | \mathcal{E}_{\rm start}, \, I = i] \P_{\sigma}[I=i | \mathcal{E}_{\rm start}] \\
&\gtrsim L^{-1} \sum_{i \in \mathbb{N}}  (L^{-1} - Le^{-2 \beta}) \P_{\sigma}[I=i | \mathcal{E}_{\rm start}] \approx L^{-2}.
\ee
By essentially the same calculation, we have
\be
\P_{\sigma}[|D(X_{\tau_{\rm end}} )| > 4] &\lesssim L^{2} e^{-2 \beta} \\
1- \P_{\sigma}[X_{\tau_{\rm end}} = \sigma] &\approx L^{-1}.
\ee
This completes the proof of \eqref{IneqExcursionBadProbs} in the case that $1 \leq \ell \leq L$ and $w(\sigma) \prec w(\eta)$. Combining these two inequalities, and using symmetry,
\be
\P_{\sigma}[X_{\tau_{\rm end}} = \eta | X_{\tau_{\rm end}} \neq \sigma] &\gtrsim L^{-1} \frac{\P_{\sigma}[X_{\tau_{\rm end}} = \eta]}{\P_{\sigma}[X_{\tau_{\rm end}} \in \cG \backslash \{\sigma\}] + \P[|D(X_{\tau_{\rm end}} )| > 4]} \\
&\gtrsim \frac{L^{-2}}{L^{-1} + L^{2} e^{-2 \beta}} \approx L^{-1}.
\ee
This completes the proof of Inequality \eqref{IneqLemmaHyperCompMainConc}, and thus the lemma, in the case that $1 \leq \ell \leq L$ and $w(\sigma) \prec w(\eta)$. The other cases are essentially identical by symmetry.

\end{proof}

This comparison implies the mixing bound:

\begin{cor} \label{LemmaTraceMixing}
The mixing time of the transition kernel $\frac{1}{2}(\mathrm{Id} + Q_{\mathcal{G}}^{\rm per})$ is $O(L \, \log(L)^{2})$.
\end{cor}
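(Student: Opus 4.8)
The plan is to deduce the mixing bound for $\frac{1}{2}(\mathrm{Id} + Q_{\mathcal{G}}^{\rm per})$ from Lemma \ref{LemmaHyperComp} by a comparison-of-Dirichlet-forms argument against the simple random walk on the hypercube $\{-1,+1\}^{2L-1}$, whose mixing and relaxation behaviour are classical. First I would recall that the lazy hypercube walk $\frac{1}{2}(\mathrm{Id} + Q_{\rm hyp})$ has relaxation time $\Theta(L)$ and mixing time $\Theta(L\log L)$, and that its stationary distribution is uniform. Since $w$ is a bijection between $\mathcal{G}$ and $\{-1,+1\}^{2L-1}$ and the plaquette dynamics with periodic boundary conditions is symmetric under the row/column flips generating the group of ground states (so $\pi_{\L}^{\rm per}$ restricted to $\mathcal{G}$ is uniform, and the trace kernel $Q_{\mathcal{G}}^{\rm per}$ is reversible with respect to this uniform measure), the pushforward of the uniform measure on $\mathcal{G}$ under $w$ is exactly the stationary measure of $Q_{\rm hyp}$. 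Thus $w$ identifies the two reversible Markov chains up to the distortion of transition probabilities controlled by \eqref{IneqLemmaHyperCompMainConc}.

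Next I would run the standard comparison argument (in the style of Diaconis--Saloff-Coste). Inequality \eqref{IneqLemmaHyperCompMainConc} says $Q_{\mathcal{G}}^{\rm per}(w^{-1}(u),w^{-1}(v)) \gtrsim Q_{\rm hyp}(u,v)$ uniformly, so for every function $f$ on $\mathcal{G}$, writing $\tilde f = f \circ w^{-1}$ on the hypercube, the Dirichlet forms satisfy $\mathcal{D}_{Q_{\mathcal{G}}^{\rm per}}(f) \gtrsim \mathcal{D}_{Q_{\rm hyp}}(\tilde f)$, while the variances with respect to the (matched) uniform stationary measures are equal. Hence the spectral gap of $Q_{\mathcal{G}}^{\rm per}$ is at least a constant multiple of that of $Q_{\rm hyp}$, i.e. $\gap(Q_{\mathcal{G}}^{\rm per}) \gtrsim L^{-1}$, so the relaxation time of the lazy trace chain is $O(L)$. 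A spectral-gap bound alone only gives mixing $O(L \log(|\mathcal{G}|)) = O(L^{2})$ via the bound $\tmix \lesssim \trel \log(1/\mu_{\ast})$ with $\mu_{\ast} = 2^{-(2L-1)}$, which is too weak; to recover $O(L\log(L)^{2})$ I would instead transfer the log-Sobolev or (more than enough here) the Nash/spectral-profile inequality from the hypercube. Concretely, the hypercube walk satisfies a log-Sobolev inequality with constant $\Theta(L)$, and comparison of Dirichlet forms with matched stationary measures transfers this with only a constant loss, giving $\mathrm{LSI}(Q_{\mathcal{G}}^{\rm per}) = O(L)$; then the standard bound $\tmix = O(\mathrm{LSI} \cdot \log\log(1/\mu_\ast))$ yields $\tmix = O(L \log L)$. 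Even without pushing that far, transferring the spectral profile $\Lambda(r)$ of the hypercube (for which $\Lambda(r)^{-1} \lesssim L \log(1/r)/\log(1/(r \cdot 2^{-(2L-1)}))$ up to constants, giving the $\log^2$) through \eqref{IneqMainSpectralProfileBound} produces the claimed $O(L \log(L)^{2})$ bound.

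The main obstacle is the bookkeeping needed to justify that the comparison is genuinely clean: one must check that $Q_{\mathcal{G}}^{\rm per}$ is indeed reversible with respect to the uniform distribution on $\mathcal{G}$ (this follows because the original chain is reversible with respect to $\pi^{\rm per}_{\L}$, the trace operation preserves reversibility with respect to the restricted measure, and $\pi^{\rm per}_{\L}|_{\mathcal{G}}$ is uniform by the symmetry of the Hamiltonian under flipping rows and columns), and that the support of $Q_{\mathcal{G}}^{\rm per}$ is exactly the hypercube edge set plus the antipodal edge — so no transitions of $Q_{\rm hyp}$ are "missed" and the lower bound \eqref{IneqLemmaHyperCompMainConc} is applied only where $Q_{\rm hyp}>0$. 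These facts are all immediate consequences of the excursion analysis in Lemma \ref{LemmaHyperComp} and Remark \ref{RemSRWCouplingExplicit} (every excursion from $\mathcal{G}$ that returns to $\mathcal{G}$ ends at a neighbour in Hamming distance $1$ or at the antipode, with the $\Delta$-escape having probability $O(L^{2}e^{-2\beta}) = o(1)$ so that conditioning on returning to $\mathcal{G}$ does not distort things). Once these structural points are pinned down, the inequality for mixing follows by invoking the known spectral profile (equivalently log-Sobolev) estimate for the hypercube together with the Dirichlet-form comparison, which costs only universal constants. Hence $\tmix\big(\tfrac12(\mathrm{Id}+Q_{\mathcal{G}}^{\rm per})\big) = O(L\log(L)^{2})$.
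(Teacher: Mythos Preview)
Your approach is essentially the same as the paper's: both proofs combine the pointwise transition lower bound of Lemma~\ref{LemmaHyperComp} with a Dirichlet-form comparison against the hypercube walk, and then upgrade the resulting spectral information to a mixing-time bound. The paper packages the last step as a single citation to Kozma's ``tightness of the spectral profile'' theorem: from $\Lambda_{Q_{\mathcal{G}}^{\rm per}} \gtrsim \Lambda_{Q_{\rm hyp}}$ one gets $\tmix(Q_{\mathcal{G}}^{\rm per}) \lesssim \int \frac{dx}{x\,\Lambda_{Q_{\rm hyp}}(x)}$, and Kozma's result bounds this integral by $\tmix(Q_{\rm hyp})\cdot \log\log(1/\pi_{\ast}) \lesssim L\log(L)\cdot \log L$. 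You instead spell out two concrete routes (log-Sobolev transfer, or direct spectral-profile transfer), which is really the same mechanism with the black box opened up. Your LSI route is in fact slightly sharper and would give $O(L\log L)$ rather than $O(L\log(L)^{2})$.

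Two small remarks. First, your explicit formula for $\Lambda_{Q_{\rm hyp}}(r)$ is not quite right; for the hypercube one has $\Lambda(r)^{-1} \asymp L/\log(1/r)$, and integrating this against $dx/x$ over $[\pi_{\ast},1]$ already gives $O(L\log L)$, not $O(L\log(L)^{2})$. The extra $\log L$ in the paper's statement is the price of invoking Kozma rather than computing the profile directly. Second, the claim that the support of $Q_{\mathcal{G}}^{\rm per}$ is \emph{exactly} the hypercube edge set plus the antipodal transition is false---excursions that leave $\mathcal{R}$ can in principle produce other transitions---but, as you correctly note, this is irrelevant: the comparison argument only needs the one-sided lower bound \eqref{IneqLemmaHyperCompMainConc}, so extra transitions in $Q_{\mathcal{G}}^{\rm per}$ only help.
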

\begin{proof}
This follows immediately from the well-known fact that the mixing time of $\frac{1}{2}$-lazy simple random walk on $\{-1,+1\}^{2L-1}$ is $O( L \log(L))$, the comparison of $Q_{\mathcal{G}}^{\rm per}$ to the transition kernel for simple random walk on $\{-1,+1\}^{2L-1}$ given in Lemma \ref{LemmaHyperComp}, and Theorem 1 of \cite{kozma2007precision}.
\end{proof}

Fix $\sigma \in \Omega_{[1:L]^{2}}$ and let $\{X_{t}\}_{t \geq 0}$ be a copy of the original Markov process started at point $X_{0} = \sigma$. For $A \subset \Omega_{[1:L]^{2}}$, denote by
\be
\tau_{A} = \inf \{ t > 0 \, : \, X_{t} \in A \}
\ee
the hitting time of the set $A$.  Using the upper bound on the mixing time in Theorem \ref{ThmMainResPlus}, we have:

\begin{lemma} \label{LemmaHittingGround}
The hitting time $\tau_{\cG}$ satisfies
\be
\E_{\sigma}[\tau_{\cG}] \lesssim \beta^{9} e^{4 \beta},
\ee
uniformly in $\sigma \in \config$.
\end{lemma}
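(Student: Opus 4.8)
The plan is to reduce the hitting time of $\cG$ for the periodic spin process to the hitting time of a single configuration for the induced \emph{defect} dynamics, and then to import the spectral-profile estimates already proved for the all-plus dynamics in Section \ref{SecAllPlusRes} by way of the rate domination \eqref{LemmaCriBijection}.

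First I would observe that for periodic boundary conditions $\cG$ is exactly the set of configurations with no defects: $H^{\rm per}_\L$ is minimised precisely when $p^{\rm per}(\sigma)\equiv 1$, and such configurations exist by Lemma \ref{lem:parityPer}. Writing $\mathbf{0}$ for the zero-defect configuration and $\{p^{\rm per}(X_t)\}_{t\ge 0}$ for the defect process, which is Markov with generator $\cQ^{\rm per}_{[0:L]^{2}}$, we therefore have $\tau_\cG=\inf\{t>0\,:\,p^{\rm per}(X_t)=\mathbf{0}\}$, so that $\E_\sigma[\tau_\cG]=\E_{p^{\rm per}(\sigma)}[\tau_{\{\mathbf 0\}}]$ for the defect chain. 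Moreover, if $\mu^{\rm per}$ denotes the pushforward of $\pi^{\rm per}_\L$ under $p^{\rm per}$, then $\mu^{\rm per}(\{\mathbf 0\})=\pi^{\rm per}_\L(\cG)\ge 1-O(e^{-2\beta})\ge 1/2$ by Lemma \ref{lem:dom}. Since $\{\mathbf 0\}$ then has stationary mass at least $1/2$, a standard one-block argument gives $\E_p[\tau_{\{\mathbf 0\}}]\le 4\,T_{\rm mix}(\cQ^{\rm per}_{[0:L]^{2}})$ uniformly in the starting defect configuration $p$: at time $T_{\rm mix}$ the chain lies in $\{\mathbf 0\}$ with probability at least $\mu^{\rm per}(\{\mathbf 0\})-1/4\ge 1/4$, hence $\P_p[\tau_{\{\mathbf 0\}}>n\,T_{\rm mix}]\le(3/4)^n$ by the Markov property, and integrating the tail yields the claim (alternatively one may invoke the hitting-time characterisation of \cite{Peres2015,Oliveira2012}, as in the strategy outline). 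It therefore suffices to prove $T_{\rm mix}(\cQ^{\rm per}_{[0:L]^{2}})\lesssim\beta^{9}e^{4\beta}$.

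For this I would apply the spectral-profile bound \eqref{IneqMainSpectralProfileBound} to $\cQ^{\rm per}_{[0:L]^{2}}$, so that it remains only to lower bound its spectral profile $\Lambda^{\rm per}(r)$. This is where the three observations preceding the lemma are used. The chains $\cQ^{\rm per}_{[0:L]^{2}}$ and $\cQ^{+}_{[1:L]^{2}}$ share the common state space $p^{+}(\Omega_{[1:L]^{2}})=p^{\rm per}(\Omega_{[0:L]^{2}})$, share the same reversible measure $\mu\propto e^{-\beta|p|}$, and satisfy the rate domination \eqref{LemmaCriBijection}. The rate domination together with equality of the reversible measures gives $\mathcal{D}_{\cQ^{\rm per}}(f)\ge\mathcal{D}_{\cQ^{+}}(f)$ for every $f$, with the variances equal, hence $\lambda^{\cQ^{\rm per}}(S)\ge\lambda^{\cQ^{+}}(S)$ for every set $S$, and so $\Lambda^{\rm per}(r)\ge\Lambda^{\cQ^{+}}(r)$ for every $r$. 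Since $p^{+}$ is a bijection (Lemma \ref{lem:parity}) carrying $\pi^{+}_\L$ to $\mu$, the chain $\cQ^{+}_{[1:L]^{2}}$ is isomorphic to the all-plus spin chain $\cL^{+}_{[1:L]^{2}}$, with $\mathbf 0$ corresponding to $+$ and the level set $\{|p|\ge k\}$ corresponding to $S_k$ (using $\pi^{+}_\L(\sigma)=\pi^{+}_\L(+)e^{-\beta|p^{+}(\sigma)|}$ from \eqref{eq:simple}); consequently $\Lambda^{\cQ^{+}}(r)=\Lambda^{\cL^{+}}(r)$, which is precisely the quantity lower-bounded in the proof of the mixing bound of Theorem \ref{ThmMainResPlus} via the containment \eqref{IneqSetContainmentSpectralProfile} and the congestion estimate \eqref{IneqMainCanonicalPathConclusion}, i.e. \eqref{IneqMainSpecProfConclusion}. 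Since the minimal stationary mass is common and of order $e^{-\Theta(\beta L^{2})}$, so that $\log(1/\mu_\ast)\asymp\beta L^{2}$, feeding \eqref{IneqMainSpecProfConclusion} into \eqref{IneqMainSpectralProfileBound} reproduces verbatim the integral computation of Section \ref{SecAllPlusRes} and gives $T_{\rm mix}(\cQ^{\rm per}_{[0:L]^{2}})\lesssim\beta^{9}e^{4\beta}$. Combining with the previous paragraph finishes the proof.

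The only genuinely delicate point is this transfer of the spectral-profile estimates across boundary conditions: one must check that the definitions of $k(r)$, $S_k$, the containment \eqref{IneqSetContainmentSpectralProfile}, and the congestion bound \eqref{IneqMainCanonicalPathConclusion} all carry over unchanged to the common defect space under the bijection $p^{+}$ — which they do, since $\mu(p)=\pi^{+}_\L(+)e^{-\beta|p|}$ exactly mirrors the spin-side weights. No new idea is required beyond the clean chain of implications ``rate domination $\Rightarrow$ Dirichlet-form domination on the common defect space $\Rightarrow$ spectral-profile domination'', which lets us recycle the all-plus analysis wholesale; the comparison in the first paragraph is for the continuous-time chain, where no laziness is needed, and is standard.
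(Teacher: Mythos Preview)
Your argument is correct and follows essentially the same route as the paper: reduce to the defect dynamics, compare the periodic defect chain to the all-plus defect chain via the rate domination \eqref{LemmaCriBijection}, import the all-plus mixing bound, and then convert mixing into hitting of the large set $\{\mathbf 0\}$. The only real difference is in how the comparison step is packaged. The paper cites a comparison result (Theorem~1 of \cite{kozma2007precision}) to obtain $\tau_{\mathrm{mix},d}^{\rm per}\lesssim \beta\,\tau_{\mathrm{mix},d}^{+}$ and then bounds $\tau_{\mathrm{mix},d}^{+}\le T_{\rm mix}^{+}(L_c)$ by the spin mixing time from Theorem~\ref{ThmMainResPlus}. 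You instead go directly through the spectral profile: since the two defect chains share the same state space and reversible measure, the rate domination gives $\mathcal D_{\cQ^{\rm per}}(f)\ge \mathcal D_{\cQ^{+}}(f)$ for every $f$, hence $\Lambda^{\rm per}(r)\ge \Lambda^{\cQ^{+}}(r)=\Lambda^{\cL^{+}}(r)$ pointwise, and you feed \eqref{IneqMainSpecProfConclusion} straight into \eqref{IneqMainSpectralProfileBound}. Your route is slightly cleaner (no extraneous factor from the comparison theorem), but both arrive at the same $\beta^{9}e^{4\beta}$ bound, and neither requires any idea beyond the three observations preceding the lemma.
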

\begin{proof}

Denote by $\tau_{\mathrm{mix}, d}^{\mathrm{per}}$ and $\tau_{\mathrm{mix}, d}^{+}$ the mixing times of the defect dynamics given by the generators $\cQ_{[0:L]^{2}}^{(\mathrm{per})}$ and $\cQ_{[1:L]^{2}}^{+}$ respectively. Since the defect dynamics are a deterministic function of the spin dynamics, it is clear that
\be
\tau_{\mathrm{mix}, d}^{+} \leq \tau_{\mathrm{mix}}^{+}, \quad \tau_{\mathrm{mix}, d}^{\mathrm{per}} \leq \tau_{\mathrm{mix}}^{\mathrm{per}}.
\ee
By the tightness of the spectral profile (as stated in Theorem 1 of \cite{kozma2007precision}) and Inequality \eqref{LemmaCriBijection}, we also have
\be
\tau_{\mathrm{mix}, d}^{\mathrm{per}} \lesssim \beta \tau_{\mathrm{mix}, d}^{+}.
\ee
Combining these two bounds with Theorem \ref{ThmMainResPlus},
\be \label{IneqPerDefMixing}
\tau_{\mathrm{mix}, d}^{\mathrm{per}} \lesssim \beta \tau_{\mathrm{mix}, d}^{+} \lesssim \beta^{9} e^{4 \beta}.
\ee
Recalling that $\pi(\cG) = 1 - o(1)$ (by Lemma \ref{lem:dom}), Inequality \eqref{IneqPerDefMixing} and  Theorem 1 of \cite{Peres2015} immediately imply the result.

\end{proof}

For $\sigma \in \cG$, we have the improved bound on the expected hitting time:

\begin{lemma} \label{LemmaExpectedExcursionPerGr}
Let $\sigma \in \cG$. Then
\be
\E_{\sigma}[\tau_{\cG \backslash \{\sigma \}}] \lesssim \beta^{9} \,e^{3.5 \beta}.
\ee
\end{lemma}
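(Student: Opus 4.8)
The plan is to bound $\E_\sigma[\tau_{\cG\setminus\{\sigma\}}]$ by decomposing the trajectory into successive excursions away from the ground state $\sigma$, each of which either returns to $\sigma$, reaches a new ground state, or leaves the low-energy set $\cR$. First I would introduce the stopping times $\tau_{\mathrm{start}}$ and $\tau_{\mathrm{end}}$ from \eqref{EqDefZetaEndCleanup} of Lemma \ref{LemmaHyperComp}, so that one ``excursion'' is the piece of trajectory from the first time the chain leaves $\sigma$ until the first time it either hits $\cG$ or reaches a configuration with more than four defects. By the strong Markov property, $\E_\sigma[\tau_{\cG\setminus\{\sigma\}}]$ is at most the expected length of a single excursion times the expected number of excursions needed before one of them ends at a ground state other than $\sigma$ — plus, in the (low-probability) event an excursion leaves $\cR$, an extra contribution controlled by Lemma \ref{LemmaHittingGround}.

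The core estimate is the expected duration of a single excursion. Here I would use the explicit coupling described in Remark \ref{RemSRWCouplingExplicit}: once the chain enters a set $\{R(\sigma,\eta,m,k)\}_{k=2}^{L-2}$, the two adjacent defect pairs perform independent rate-$1$ simple random walks on $\mathbb{Z}$, killed at rate $O(L^2 e^{-4\beta} + e^{-2\beta})$ (creation of new defects), and the excursion ends when the walks become adjacent, i.e. at the hitting time of $\{1,L-1,\Delta\}$ for the chain $K_{\mathrm{SRW}}$ introduced in the proof of Lemma \ref{LemmaHyperComp}. A standard simple-random-walk computation gives that this hitting time has expectation $O(L^2)$ (the diffusive time to span an interval of length $L$, which is shorter than the killing time $\asymp e^{2\beta}\asymp L^4$), but these excursion steps happen at rate $e^{-2\beta}$ relative to the ambient clock — each spin flip that moves a defect pair along a row occurs at rate $H^{-1}e^{-2\beta}$ or $W^{-1}e^{-2\beta}$ as in Figure \ref{FigCornerWalk}. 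So the real-time length of an excursion that travels along a minimal-energy path is $O(L^2 \cdot L \cdot e^{2\beta}) = O(L^3 e^{2\beta})$ — wait, more carefully: the SRW on $[2:L-2]$ needs $O(L^2)$ \emph{steps}, each step takes real time $\Theta(L e^{2\beta})$, giving $O(L^3 e^{2\beta})$; but excursions that immediately return (the overwhelming majority, since from $\cR_1\cup\cR_{L-1}$ the chain has a neighbour at rate $1$) contribute only $O(e^{2\beta})$ each. Since $L \asymp e^{\beta/2}$, a typical long excursion costs $O(e^{3.5\beta})$ in real time, and I would need to check that the rare excursions (leaving $\cR$) are rare enough — probability $\lesssim L^2 e^{-2\beta} \asymp e^{-\beta}$ by \eqref{IneqExcursionBadProbs} — that their contribution $e^{-\beta}\cdot \beta^9 e^{4\beta} = \beta^9 e^{3\beta}$ is lower order.

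Next I would count excursions. By \eqref{IneqExcursionBadProbs}, an excursion ends at a point other than $\sigma$ (either a new ground state or a high-defect configuration) with probability $\asymp L^{-1}$, and ends at a high-defect state with probability only $\lesssim L^2 e^{-2\beta}$. Hence the number $N$ of excursions until the first one that does not return to $\sigma$ is geometric with mean $\asymp L$. Combining, $\E_\sigma[\tau_{\cG\setminus\{\sigma\}}] \lesssim \E[N]\cdot(\text{mean excursion length})$; the mean excursion length is dominated by the $O(L^{-1})$-probability long excursions of length $O(L^3 e^{2\beta})$ contributing $O(L^2 e^{2\beta})$ on average, plus the short ones contributing $O(e^{2\beta})$, so the mean excursion length is $O(L^2 e^{2\beta})$ and thus $\E_\sigma[\tau_{\cG\setminus\{\sigma\}}] \lesssim L\cdot L^2 e^{2\beta} = L^3 e^{2\beta} \asymp e^{3.5\beta}$. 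The claimed bound $\beta^9 e^{3.5\beta}$ leaves ample polynomial slack, which absorbs the logarithmic corrections from the SRW hitting-time estimates and any $\beta$-factors from the spectral-profile comparison used to handle the excursions that stray outside $\cR$.

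The main obstacle I anticipate is making the excursion decomposition fully rigorous when an excursion leaves the ``nice'' set $\cR\setminus(\cG\cup\cR_1\cup\cR_{L-1})$: on that event the clean SRW coupling of Remark \ref{RemSRWCouplingExplicit} breaks, and one must fall back on the crude bound $\E_\cdot[\tau_\cG]\lesssim \beta^9 e^{4\beta}$ from Lemma \ref{LemmaHittingGround} (via the restarting/strong-Markov argument) and check the probability weighting makes this negligible. A secondary technical point is correctly accounting for the real-time rates: the defect-pair random walk moves at rate $\Theta(H^{-1}e^{-2\beta})$ per unit of the ambient Poisson clock, not rate $1$, so the translation from ``number of SRW steps'' to ``elapsed real time'' carries the crucial factor of $L e^{2\beta}$ that produces $e^{3.5\beta}$ rather than $e^{2.5\beta}$; I would present this carefully, probably by time-changing the $K_{\mathrm{SRW}}$ chain explicitly.
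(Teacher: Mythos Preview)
Your overall architecture matches the paper's proof: decompose into excursions from $\sigma$, couple each excursion on $\cR\setminus(\cG\cup\cR_1\cup\cR_{L-1})$ to the simple random walk $K_{\mathrm{SRW}}$, show the number of excursions is geometric with mean $\asymp L$, and absorb the rare escapes from $\cR$ using Lemma~\ref{LemmaHittingGround}. That part is fine.

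However, your time accounting is wrong in two places that happen to cancel. First, the claim that ``these excursion steps happen at rate $e^{-2\beta}$ relative to the ambient clock'' is incorrect. You are importing the Figure~\ref{FigCornerWalk} heuristic, which describes four \emph{isolated} defects at the corners of a rectangle; there, moving a side requires passing through a $6$-defect intermediate and so costs $e^{-2\beta}$. But on $\cR_k$ for $2\le k\le L-2$ the four defects sit in two \emph{adjacent} pairs, and flipping the spin next to such a pair moves it at rate~$1$ (two defects are destroyed, two created, net energy change zero; see Figure~\ref{FigDominantTransitionsBetweenGroundStates} and Equation~\eqref{eq:ratesp}). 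So the SRW coupling is already in real time, and a good excursion lasts $O(L^2)\asymp e^{\beta}$, not $O(L^3 e^{2\beta})$.

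Second, you never account for the dominant contribution: the holding time in $\sigma$ itself. From a ground state every spin flip creates four defects, so the exit rate is $L^2 e^{-4\beta}$ and the expected wait is $L^{-2}e^{4\beta}\asymp e^{3\beta}$. In the paper this appears as $\E_\sigma[t_1\mid J=j]\lesssim e^{3\beta}$ and is what drives the final bound (the $O(L^2)$ excursion and the $\beta^9 e^{3\beta}$ bad-event contribution are both $\lesssim \beta^9 e^{3\beta}$ as well). Your wrong excursion estimate $L^2 e^{2\beta}=e^{3\beta}$ coincidentally equals this missing waiting time, so your final product $L\cdot e^{3\beta}=e^{3.5\beta}$ comes out right, but the reasoning would not survive scrutiny. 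Fix the rate for moves within $\cR$, and insert the ground-state holding time explicitly.
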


\begin{proof}
Let $\{X_{t}\}_{t \geq 0}$ be a copy of the original Markov process with starting point $X_{0} = \sigma$. Define inductively the sequence of times
\be
t_{1} &= \inf \{t > 0 \, :\, X_{t} \neq \sigma\} \\
s_{i} &= \inf \{t > t_{i} \, : \, X_{t} \in \cG \} \\
t_{i+1} &= \inf \{t > s_{i} \, : \, X_{t} \neq X_{s_{i}} \}.
\ee
Denote by
\be
\mathcal{B}_{i} = \{ \max_{t_{i} \leq t \leq s_{i}} | D(X_{t})| > 4\}
\ee
the event that $\{X_{t}\}_{t \geq 0}$ escapes from the collection of minimal-energy paths during the $i$'th excursion. Define
\be
J = \min \{ j \in \mathbb{N} \, : \, X_{s_{j}} \neq \sigma\},
\ee
so that $\tau_{\cG \backslash \{\sigma\}} = s_{J}$. Thus, we have
\be \label{IneqExcursionMainAttempt}
\E_{\sigma}[\tau_{\cG \backslash \{\sigma\}}] &= \E_{\sigma}[s_{J}] \\
&=\E_{\sigma}[\sum_{i=1}^{J} (s_{i}-t_{i}) + \sum_{i=2}^{J} (t_{i} - s_{i-1}) + t_{1}] \\
&=\sum_{j = 1}^{\infty}(\sum_{i=1}^{j} \E_{\sigma}[ (s_{i}-t_{i}) | J=j] +  \sum_{i=2}^{j} \E_{\sigma}[(t_{i} - s_{i-1})| J=j]  +\E_{\sigma}[ t_{1}| J=j]) \P_{\sigma}[J = j].
\ee
We now bound these three types of terms, and the probability $\P_{\sigma}[J = j]$. By direct inspection of the transition probabilities, and the Markov property,
\be
\E_{\sigma}[t_{1} | J=j] \lesssim L^{-2} e^{4 \beta} \approx e^{3 \beta}.
\ee
By the same calculation,
\be \label{IneqInitialSquareIneq}
\E_{\sigma}[t_{i} - s_{i-1} | J=j] = \E_{\sigma}[t_{1} | J=j] \lesssim L^{-2} e^{4 \beta} \approx e^{3 \beta}
\ee
for $1 \leq i \leq j$. By Inequality \eqref{IneqExcursionBadProbs} ,
\be
\P_{\sigma}[J = 1] \gtrsim L^{-1},
\ee
and so by the Markov property
\be \label{IneqFinallyLeavingProb}
\P_{\sigma}[J >j] \leq (1 - C \, L^{-1})^{j}
\ee
for some fixed $0 < C < \infty$ and all $j \in \mathbb{N}$. Finally, we calculate the term $\E[s_{i}-t_{i} | J=j]$. By the coupling Equality \eqref{SRW_Coupling_Properties} from the proof of Lemma \ref{LemmaHyperComp} and the usual bound of $O(L^{2})$ on the first hitting time of $\{1,L-1\}$ for simple random walk on $[1:(L-1)]$,
\be \label{IneqSTdiff1}
\E_{\sigma}[(s_{i}-t_{i}) \1_{\mathcal{B}_{i}^{c}} | J=j] \lesssim L^{2}.
\ee
By Inequality \eqref{IneqExcursionBadProbs},
\be
\P_{\sigma}[\mathcal{B}_{i} | J=j] \lesssim L^{2} e^{-2 \beta} \approx e^{-\beta}.
\ee
Thus, also applying Lemma \ref{LemmaHittingGround} and then again Inequality \eqref{IneqExcursionBadProbs},
\be
\E_{\sigma}[(s_{i}-t_{i}) \1_{\mathcal{B}_{i}} | J=j] &= \E_{\sigma}[(s_{i}-t_{i})| J=j, \, \mathcal{B}_{i}] \P_{\sigma}[\mathcal{B}_{i} | J=j] \\
&\stackrel{\text{Lemma } \ref{LemmaHittingGround}}{\lesssim} \beta^{9} \, e^{4 \beta} \P_{\sigma}[\mathcal{B}_{i} | J=j] \\
&\stackrel{\text{Inequality } \eqref{IneqExcursionBadProbs}}{\lesssim} \beta^{9} \, e^{4 \beta} \, L^{2} \, e^{-2 \beta} \\
&\approx \beta^{9} e^{3 \beta}.
\ee
Combining this with Inequality \eqref{IneqSTdiff1}, we conclude
\be \label{IneqSTdiff2}
\E_{\sigma}[(s_{i}-t_{i}) | J=j] \lesssim \beta^{9} e^{3 \beta}.
\ee

Applying Inequalities \eqref{IneqInitialSquareIneq}, \eqref{IneqFinallyLeavingProb}, and \eqref{IneqSTdiff2} to Inequality \eqref{IneqExcursionMainAttempt}, we have:
\be
\E_{\sigma}[\tau_{\cG \backslash \{ \sigma \}}] &=\sum_{j = 1}^{\infty}(\sum_{i=1}^{j} \E_{\sigma}[ (s_{i}-t_{i}) | J=j] +  \sum_{i=2}^{j} \E_{\sigma}[(t_{i} - s_{i-1})| J=j]  +\E_{\sigma}[ t_{1}| J=j]) \P[J = j] \\
&\lesssim \sum_{j=1}^{\infty} j \, \beta^{9} \, e^{3 \beta} \P_{\sigma}[J=j] \\
&\lesssim \beta^{9} \, e^{3.5 \beta}.
\ee
This completes the proof.
\end{proof}

We now put these results together:

\begin{proof} [Proof of Upper Bound of Theorem \ref{ThmMainResPer}]

Fix a point $\sigma \in \Omega_{[1:L]^{2}}$ and a subset $A \subset \Omega_{[1:L]^{2}}$ with $\pi_{\L}^{\rm per}(A) \geq \frac{3}{4}$. We wish to bound the expected hitting time $\E_{\sigma}[\tau_{A}]$.

Assume for now that $\sigma \in \cG$. Let $\{\hat{X}_{i}\}_{i \in \mathbb{N}}$ be the trace of $\{X_{t}\}_{t \geq 0}$ on $\cG$, as in Definition \ref{DefTrace}.  Let
\be
\hat{\tau}_{A} = \min \{i \, : \, \hat{X}_{i} \in A\}.
\ee
By Lemma \ref{lem:dom}, we know that $\pi_{\L}^{\rm per}(A \cap \cG) \geq \frac{2}{3}$ for all $\beta > \beta_{0}$ sufficiently large. Thus, by Theorem 1 of \cite{Peres2015} and Corollary \ref{LemmaTraceMixing},
\be
\E_{\sigma}[\hat{\tau}_{A}] \lesssim L \, \log(L)^{2},
\ee
uniformly in the particular choice of $\sigma \in \cG$ and $A \subset \config$ with large measure. In particular, there exists a constant $0 < C_{1} < \infty$ that satisfies
\be \label{IneqHittingNearConcProj}
\P_{\sigma}[\hat{\tau}_{A} > C_{1} \, L \, \log(L)^{2}] \leq 0.01
\ee
uniformly in these choices.

Next, let $\{t_{i}\}_{i \in \mathbb{N}}$ be the random sequence of times given in Definition \ref{DefTrace} when constructing the trace $\{\hat{X}_{i}\}_{i \in \mathbb{N}}$. Define also the timescale $I = \lceil C_{1} \, L \, \log(L)^{2} + 1 \rceil$. By Lemma \ref{LemmaExpectedExcursionPerGr},
\be \label{IneqHittingNearConcExc}
\E[t_{i+1} - t_{i} | X_{t_{i}} = \eta] \lesssim \beta^{9} e^{3.5 \beta},
\ee
uniformly in the choice of $\eta \in \cG$. In particular, there exists a constant $0 < C_{2} < \infty$ that satisfies
\be
\P_{\sigma}[t_{I} > C_{2} \, \beta^{9} \, e^{3.5 \beta} \, L \, \log(L)^{2}] \leq 0.01
\ee
uniformly in the initial point $\sigma \in \cG$. Combining Inequalities \eqref{IneqHittingNearConcProj} and \eqref{IneqHittingNearConcExc}, we have uniformly in $\sigma \in \cG$ and $A \subset \Omega_{[1:L]^{2}}$ with sufficiently large measure
\be \label{IneqHittingFromGround}
\P_{\sigma}[\tau_{A} > C_{2} \,\beta^{9} \, e^{3.5 \beta} \, L \, \log(L)^{2}] &\leq \P_{\sigma}[\hat{\tau}_{A} > I] + \P_{\sigma}[t_{I} > C_{2} \, \beta^{9} \, e^{3.5 \beta} \, L \, \log(L)^{2}] \\
&\leq 0.02.
\ee

Next we extend this to general starting positions. By Lemma \ref{LemmaHittingGround}, there exists $0 < C_{3} < \infty$ so that, for all $\sigma \in \Omega_{[1:L]^{2}}$,
\be
\P_{\sigma}[\tau_{\cG} > C_{3} \beta^{9} e^{4 \beta}] \leq 0.01.
\ee
Combining this with Inequality \eqref{IneqHittingFromGround}, we conclude that for all $\sigma \in \Omega_{[1:L]^{2}}$,

\be
\P_{\sigma}[\tau_{A} &> C_{3} \beta^{9} e^{4 \beta} + C_{2} \, \beta^{9} e^{3.5 \beta} \, L \, \log(L)^{2}]  \leq \P_{\sigma}[\tau_{\cG} > C_{3} \beta^{9} e^{4 \beta}] + \sup_{\eta \in \cG} \P_{\eta}[\tau_{A} > C_{2}  \, \beta^{9} \, e^{3.5 \beta} \, L \, \log(L)^{2}]  \\
&\leq 0.01 + 0.02 = 0.03.
\ee
This immediately implies
\be
\max_{\sigma \in \Omega_{[1:L]^{2}}} \E_{\sigma}[\tau_{A}] \lesssim \beta^{9} e^{4 \beta}.
\ee
By Theorem 1 of \cite{Peres2015}, this implies the desired bound on the mixing time.

\end{proof}

\section{Acknowledgments}

We thank Alessandra Faggionato, Fabio Martinelli and Cristina Toninelli for their help, hospitality and encouragement. Several of the basic observations (e.g. Equation \eqref{EqCristinaBijection}) are theirs, and much of the initial work on this project, including the lower bound in Section \ref{sec:lwrbnd}, was completed during visits hosted by Cristina Toninelli and Fabio Martinelli. Of course, all errors introduced in the implementation are our own.

\bibliographystyle{abbrv}
\bibliography{PlaqBib}

\end{document}